\documentclass[a4paper,12pt]{amsart}

\usepackage{amsthm}
\usepackage{amsmath}
\usepackage{amsfonts}
\usepackage{amssymb}
\usepackage{times}
\usepackage{graphicx}
\usepackage{mathabx}

\textwidth=13cm
\textheight=19cm
\parindent=0.9cm

\theoremstyle{definition}
\newtheorem{defn}{\indent\bf Definition}
\newtheorem{rem}[defn]{\indent\bf Remark}
\newtheorem{ex}[defn]{\indent\bf Example}
\theoremstyle{plain}
\newtheorem{lemma}[defn]{\indent\bf Lemma}
\newtheorem{prop}[defn]{\indent\bf Proposition}
\newtheorem{thm}[defn]{\indent\bf Theorem}
\newtheorem{cor}[defn]{\indent\bf Corollary}

\def\tilde{\widetilde}

\def\PSL{\mathop{\rm PSL}\nolimits}
\def\SL{\mathop{\rm SL}\nolimits}
\def\PGL{\mathop{\rm PGL}}

\def\Aut{\mathop{\rm Aut}}

\def\Z{\mathbb Z}
\def\R{\mathbb R}
\def\cR{{\mathcal R}}

\def\N{\mathbb N}
\def\C{\mathbb C}
\def\bH{\mathbb H}
\def\L{{\mathcal L}}

\def\U{{\mathcal U}}
\def\H{{\mathcal H}}
\def\A{{\mathcal A}}
\def\S{{\mathcal S}}

\def\O{{\mathcal O}}
\def\V{{\mathcal V}}
\def\D{{\mathcal D}}
\def\B{{\mathcal B}}

\def\cC{{\mathcal C}}
\def\cR{{\mathcal R}}

\def\P{{\mathcal P}}

\def\cX{{\mathcal X}}

\begin{document}

\title[Endomorphisms of spaces of  virtual vectors]{Endomorphisms of spaces of virtual  vectors fixed by  a discrete group}
\author[Florin R\u adulescu]{Florin R\u adulescu${}^*$
\\ \\
Dipartimento di Matematica\\ Universit\` a degli Studi di Roma ``Tor Vergata''}
\dedicatory{Dedicated to Professor Pierre De La Harpe on the occasion of his 70th anniversary}
\maketitle

\thispagestyle{empty}

\renewcommand{\thefootnote}{}
\footnotetext{${}^*$ Member of the Institute of  Mathematics ``S. Stoilow" of the Romanian Academy}
\footnotetext{${}^*$
Supported in part by PRIN-MIUR, and by a grant of the Romanian National Authority for Scientific Research, project number PN-II-ID-PCE-2012-4-0201
}
\footnotetext{${}^*$ Visiting Professor, Department of Mathematics, University of Copenhagen}

\begin{abstract}

Consider  a  unitary representation $\pi$  of a  discrete group  $G$, which, when restricted to an almost normal subgroup $\Gamma\subseteq G$, is of type II. We analyze the associated unitary representation $\overline{\pi}^{\rm{p}}$ of $G$ on  the Hilbert space of "virtual" $\Gamma_0$-invariant vectors, where $\Gamma_0$ runs over a suitable  class of finite index subgroups of $\Gamma$. The unitary representation $\overline{\pi}^{\rm{p}}$ of $G$ is uniquely determined by the requirement that the  Hecke operators, for all  $\Gamma_0$, are the "block matrix coefficients" of $\overline{\pi}^{\rm{p}}$.

If $\pi|_\Gamma$ is an integer multiple of the  regular representation,  there exists a subspace $L$ of the Hilbert space of the representation $\pi$, acting as a fundamental domain for $\Gamma$. In this case, the space of   $\Gamma$-invariant vectors is identified with $L$.
 When $\pi|_\Gamma$ is  not an integer multiple of the regular representation, (e.g. if $G=PGL(2,\mathbb Z[\frac{1}{p}])$, $\Gamma$ is the modular group,  $\pi$ belongs to the discrete series of  representations of $\PSL(2,\mathbb R)$, and the  $\Gamma$-invariant vectors are the cusp forms) we assume that $\pi$ is  the restriction to a subspace $H_0$ of a larger unitary representation having a  subspace $L$ as above.

 The operator angle   between the projection $P_L$ onto $L$ (typically the characteristic function of the fundamental domain) and   the projection $P_0$ onto the subspace $H_0$ (typically a Bergman projection onto a space of analytic functions),  is   the analogue of the space of  $\Gamma$- invariant vectors.

 We prove that
the  character of the unitary  representation $\overline{\pi}^{\rm{p}}$
 is uniquely determined by the character of the representation $\pi$.


\end{abstract}

\vskip20pt

\section{Introduction and main results}

Let $G$ be a countable discrete group, and let $\Gamma$ be an almost normal subgroup. Let $\S$ be the minimal, intersection lattice of finite index subgroups of $\Gamma$, that is closed with respect  to the operation
\begin{equation}\label{gammasigma}
\Gamma_0\rightarrow  (\Gamma_0)_\sigma= \Gamma_0 \cap \sigma\Gamma_0\sigma^{-1},\quad \sigma \in G.
\end{equation}
 We assume throughout this paper that the intersection of the subgroups in $\S$ is the identity element.

 We consider a unitary (projective) representation $\pi$ of $G$ in a Hilbert space $H$.
 Throughout   this paper we  make the assumption that $\pi|_{\Gamma}$ is a multiple of the left regular representation $\lambda_{\Gamma}$  (\cite{Ta}, \cite{Sak}) of $\Gamma$ (eventually skewed with  a cocycle if $\pi$ is projective). Under  this hypothesis, we construct Hilbert spaces $H^{\Gamma}$ of $\Gamma$-invariant vectors. More generally, we  construct Hilbert spaces $H^{\Gamma_0}$ of $\Gamma_0$-invariant vectors,  when $\Gamma_0$ runs over $\S$.

  We will  refer to  vectors as above, for $\Gamma_0\in \S$,   as to  "virtual" $\Gamma_0$-invariant vectors, as they generally correspond to $\Gamma_0$-invariant, densely defined, linear functionals on $H$.
%
The Hilbert  spaces  $H^{\Gamma_0}$, $\Gamma_0\in \S$, are not proper subspaces of $H$; they are located in an enlargement, in the sense of Gelfand triples (see \cite{GV})  of the given Hilbert space (see Definition \ref{formalism}).  For all subgroups $\Gamma_0,\Gamma_1\in \S$ such that $\Gamma_1\subseteq \Gamma_0$, there exists a canonical isometrical embedding $H^{\Gamma_0}\subseteq H^{\Gamma_1}.$

We exemplify this  in the case of the left regular representation $\lambda_{\Gamma}$ of $\Gamma$ into the unitary group $\U(l^2(\Gamma))$ of the Hilbert space $l^2(\Gamma)$. In this case the space of $\Gamma$-invariant vectors is the one dimensional space generated by the constant functions on $\Gamma$. This space may be considered as a space of densely defined, linear functionals on    $l^2(\Gamma)$. In general, in this formalism,
 $l^2(\Gamma)^{\Gamma_0}=\ell^2(\Gamma_0\backslash \Gamma)$.

In general, if the unitary representation $\pi$ into the Hilbert space $H$
is an integer multiple of the left regular representation $\lambda_\Gamma$ of the discrete group $\Gamma$, then there exists a  subspace $L$ such that, $\Gamma$-equivariantly, we have that
$$H\cong l^2(\Gamma)\otimes L, \quad \pi|_\Gamma\cong\lambda_\Gamma \otimes {\rm Id}_L .$$
In this case too, it is  obvious that  one may identify the space of $\Gamma$-invariant vectors with the Hilbert space $L$. In this identification $L$ is no longer a proper subspace of $H$. Moreover,  $L$ is not unique.

If the unitary representation $\pi$ is not an integer multiple of the left regular representation $\lambda_\Gamma$,
we will make use  of a  subspace $L$ from a larger representation of $G$, which contains the given representation $\pi$ as a subrepresentation.
In this case we will prove below that the operator angle between the projection onto the subspace of the subrepresentation and the projection onto $L$ may be used to construct the space of $\Gamma$-invariant vectors and its scalar product.

This  is analogous to the Petersson scalar product formula (\cite{Pe}) for automorphic forms, where to introduce the scalar product on automorphic forms which are "virtual"  $PSL(2,\mathbb Z)$-invariant vectors for the restrictions to $PSL(2,\mathbb Z)$ of the  representations $\pi_n, n\geq 2$ in the  discrete series  of $PSL(2,\mathbb R)$, one uses a "measuring scale" consisting of a fundamental domain $F$ for the action of  $PSL(2,\mathbb Z)$ on the upper halfplane. The larger unitary representation, containing the representation $\pi_n$  as a subrepresentation, is obtained as follows: if $\pi_n$ is realized as unitary representation on a space $H^2(\mathbb H,\nu_n)$   consisting of square  integrable analytic functions with respect to a measure $\nu_n$ on $\mathbb H$, the the larger unitary representation containing $\pi_n$ as a subrepresentation is realized on the corresponding Hilbert space  $L^2(\mathbb H,\nu_n)$.
The space of square summable functions supported in $F$  plays the role of the subspace $L$ as above.

Let $\sigma$ be a group element in $G$. We use the notation introduced in formula (\ref{gammasigma}).
It is obvious that $\sigma\Gamma_{\sigma^{-1}}\sigma^{-1}=\Gamma_\sigma$.  This implies that   the representation $\pi$ induces a unitary transformation, denoted by  $\overline{\pi}^{\rm p}(\sigma)$ that maps
$H^{ \Gamma_{\sigma^{-1}}}$ onto $H^{ \Gamma_{\sigma}}$.

The main problem we are approaching in this paper is the analysis of the unitary representation $\overline {\pi}^{\rm p}$, induced by the unitary representation $\pi$   on the Hilbert space $\overline H^{\rm p}$ obtained by taking the inductive limit of the  Hilbert spaces $H^{\Gamma_0}$, $\Gamma_0\in \S$.   The content of the classical Ramanujan-Petersson Problem (\cite {Hej}, \cite {Sar1}, \cite{Bo}) is transformed into  a harmonic analysis problem, concerning the weak unitary containment
of the representation $\overline {\pi}^{\rm p}$ in the unitary representation obtained by restriction to $G$, of the left regular representation of the Schlichting completion (\cite{Sch}, \cite{KLM}) of $G$ with respect to the subgroups in the family $\S$.

The representation $\overline {\pi}^{\rm p}$ is determined by the associated  Hecke operators (\cite {He}, \cite {Hej}, \cite {Sar})  at all levels $\Gamma_0$ .
The Hecke operators are "block-matrix coefficients" of the associated   unitary representation
$\overline{\pi}^{\rm{p}}$.
We are using the notation $\overline {\pi}^{\rm p}$ for the representation on the space of all "virtual" $\Gamma_0$-invariant vectors, $\Gamma_0\in \S$ to recall the profinite completion procedure that is used in the construction of this representation. When the initial representation $\pi$ is projective with cocycle $\epsilon$, and the cocycle $\epsilon$ admits an extension to the Schlichting completion $\overline G$ introduced below, the construction in this paper works ad litteram in the projective case.

The representation $\overline {\pi}^{\rm p}$ is widely used in a slightly different form in the literature (see e.g. \cite{Bo}, \cite{Cass}, \cite{Hal}). In this paper we   use an  operator algebra construction of $\overline {\pi}^{\rm p}$. This construction enables  us to get unitarily equivalent forms of
$\overline {\pi}^{\rm p}$ that are more suitable for the computations of traces.

The main result of this paper is the  correspondence between the two representations
 $\pi$ and $\overline {\pi}^{\rm p}$.  The representation $\overline {\pi}^{\rm p}$ has a canonical block matrix structure associated with a given choice of coset representatives for subgroups in $\S$. Consequently, we represent the associated Hecke operators as block matrices, whose entries are "localized sums" over cosets, of the values of  original representation $\pi$, restricted to the space $L$ introduced above (which is  the analogue  of a fundamental domain for the action of the group $\Gamma$).
 As corollary we obtain a precise formula relating the characters of $\pi$ and $\overline {\pi}^{\rm p}$.

 To establish a relation between the two representations, we introduce a summability condition, which warrants the convergence of the sums for the matrix entries in the representations that  we are  obtaining for the Hecke operators.

 \begin{defn}\label{TC}
  Let $\pi_0$ be a  unitary representation of $G$, such that $\pi_0|_\Gamma$ is a (not necessary integer) multiple of the left regular representation $\lambda_\Gamma$.  We assume that there exists  a  unitary representation $\pi$  of $G$ into the unitary group of a Hilbert space $H$ containing $H_0$, with the following properties:

  \item(i)  $\pi|_\Gamma$ is an integer multiple  of the left regular representation $\lambda_\Gamma$. Consequently, there exists a Hilbert subspace $L$ of $H$ such that  $$H \cong l^2(\Gamma) \otimes L, \quad \pi| _{\Gamma} \cong \lambda_{\Gamma} \otimes {\rm Id}_L.$$
 \item (ii) Let $e$ be the identity element of the group $G$.
We denote the orthogonal projection from $H$ onto $L\cong \C e\otimes L $ by $P_L$. We assume that  the projections in the family $\{\pi(g)P_L\pi(g^{-1})| g\in G\}$ form a commutative family.

 \item(iii)  Assume that $\pi_0$ is a subrepresentation of $\pi$. Consequently $H_0\subseteq H$  is $\pi(G)$-invariant and  $\pi_0$
 is the restriction of $\pi$ to $H_0$. Denote by $P_0$ the orthogonal projection from $H$ onto $H_0$. Clearly, in this case we have $\pi_0(g)=P_0\pi(g)P_0$, $g\in G$.

%
%
%

 \item {(iv)} The product of the operators  $P_0$ and $P_L$ is trace class.

 \item{(v)} For every $g$ in $G$ and $\Gamma_0$ in $\S$, the following sum,  over the coset $\Gamma_0g$,
\begin{equation}\label{TCsum}
\mathop{\sum}\limits_{\theta \in \Gamma_0g}P_L\pi_0(\theta)P_L,
\end{equation}
 is convergent in the space of Hilbert-Schmidt operators $\mathcal C_2(L)$.

 \item {(vi)}
The sum of traces of the operators in the sum in formula (\ref{TCsum})  is absolutely convergent. The sum in formula (\ref{TCsum}) is a  trace class operator, whose  trace  is equal to the sum of traces of the operators in the sum.
\item {(vii)} The Murray von Neumann dimension number dim$_{\{\pi_0(\Gamma)\}''}H_0$ (\cite{Ta}, \cite {Sak}, \cite{Jo})
 is a finite, strictly positive number.

\end{defn}

The condition (vii) above may be interpreted as saying that $H_0$ is a finitely generated  left Hilbert  module over  the representation
$\pi_0|_{\Gamma}$, which is a multiple of the the left regular representation of $\Gamma$.

In the main examples considered in this paper, where the representations $\pi_0$ are obtained from unitary representations in the discrete series of (projective) unitary  of $PSL(2,\mathbb R)$ by restriction  to $G=PGL(2,\mathbb Z[\frac{1}{p}])$, the conditions (iii), (iv) are a consequence of the computations in \cite{Za}. The condition (ii) is automatic if the representation $\pi$ comes from a Koopman unitary representation. Indeed, in this case the projection $P_L$ is the operator of multiplication with the characteristic function of a fundamental domain  for $\Gamma$.
Furthermore, the projections in the family are the operators of multiplication  by the characteristic functions of translations of the fundamental domain with elements in $G$.

A unitary representation $\pi_0$ of $G$ as above determines, in a canonical way, a unitary representation
$\overline {\pi_0}^{\rm p}$  of the Schlichtling completion (\cite{Sch}) $\overline G$ of $G$, with respect  to the subgroups in the family $\S$. The unitary representation $\overline {\pi_0} ^{\rm p}$   acts on $\overline H ^{\rm p}$,  the Hilbert space completion
 of the inductive limit of the Hilbert spaces $H^{\Gamma_0}$, $\Gamma_0\in \S$. This construction is rigorously presented in Theorem   \ref{traces}.
The following result gives an explicit description of the block matrix coefficients of the representation $\overline {\pi_0} ^{\rm p}$. These are Hecke operators corresponding to subgroups $\Gamma_0\in \S$ associated with the original unitary representation $\pi_0$. This representation is  used to compute the character of the representation $\overline {\pi_0} ^{\rm p}$. In the following theorem, we denote $(\Gamma_0)_\sigma =\sigma \Gamma_0\sigma^{-1}\cap \Gamma_0$, $\Gamma_0 \in \S$, $\sigma\in G$.

 \

 \begin{thm}\label{L0} Let $\pi_0$ be a unitary representation of $G$ into the unitary group $\U(H_0)$ of a Hilbert space $H_0$. Assume that $\pi_0$ verifies the technical condition from Definition \ref{TC}. Let $\overline{\pi_0}^{\rm p}$ be the unitary representation of $\overline G$ introduced above.

 For every $\Gamma_0$ in $\S$, fix a family of coset representatives for $\Gamma_0$ in $\Gamma$ so that $\Gamma = \mathop{\bigcup}\limits_{i = 1}^{[\Gamma : \Gamma_0]}\Gamma_0s_i$.
We represent $B(l^2(\Gamma_0\setminus\Gamma))$ by the matrix unit $$(e_{\Gamma_0s_i}, e_{\Gamma_0s_j})_{i, j = 1, 2, \ldots, [\Gamma : \Gamma_0]}.$$

For $\Gamma_0\in \S$ and $\sigma \in G$, the Hecke operators
  associated to $\pi_0$ that correspond to the double coset $\Gamma_0\sigma\Gamma_0$ are:
\begin{equation}\label{finalhecke}
[\Gamma_0: (\Gamma_0)_\sigma] P_{H_0^{\Gamma_0}}\overline{\pi}_0^{\rm p}(\sigma)P_{H_0^{\Gamma_0}}.
\end{equation}
Using the  unitary equivalence $$B(l^2(\Gamma_0 \setminus\Gamma)) \otimes B(L)\cong B(H^{\Gamma_0}),$$and the above choice of a matrix unit, the Hecke operators in formula (\ref{finalhecke})
are unitarily equivalent  to  the operators  in $B(l^2(\Gamma_0 \setminus \Gamma)) \otimes B(L)$ given by
\begin{equation}\label{heckematrix0}
\mathop{\sum}\limits_{i,j}\mathop{\sum}\limits_{\theta \in s_i^{-1}\Gamma_0\sigma\Gamma_0s_j}P_{L}\pi_0(\theta)P_L \otimes e_{\Gamma_0s_i, \Gamma_0s_j}.
\end{equation}

\end{thm}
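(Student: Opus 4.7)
The plan is to unfold the canonical block-matrix identification $H^{\Gamma_0}\cong l^2(\Gamma_0\setminus\Gamma)\otimes L$ (provided by condition~(i)) and compute the $(i,j)$-block of the Hecke operator (\ref{finalhecke}) directly in $B(L)$. Condition~(i) gives $H\cong l^2(\Gamma)\otimes L$ with $\pi|_\Gamma=\lambda_\Gamma\otimes {\rm Id}_L$; in the Gelfand-triple formalism, a virtual $\Gamma_0$-invariant vector in $l^2(\Gamma)$ is indexed by a coset $\Gamma_0 s_i$, formally represented by $\widehat{\Gamma_0s_i}=\sum_{\gamma\in\Gamma_0}\delta_{\gamma s_i}$. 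Tensoring, $H^{\Gamma_0}$ decomposes as $\bigoplus_{i=1}^{[\Gamma:\Gamma_0]}\widehat{\Gamma_0s_i}\otimes L$, and the matrix unit $e_{\Gamma_0s_i,\Gamma_0s_j}\otimes A$ is the block operator sending $\widehat{\Gamma_0s_j}\otimes\xi$ to $\widehat{\Gamma_0s_i}\otimes A\xi$. Writing $\Gamma_0\sigma\Gamma_0=\bigsqcup_{k=1}^{[\Gamma_0:(\Gamma_0)_\sigma]}\Gamma_0\sigma_k$, the operator in (\ref{finalhecke}) is the usual Hecke operator $\sum_k\overline{\pi}_0^{\rm p}(\sigma_k)$ restricted to $H_0^{\Gamma_0}$.

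Next, I would compute the $(i,j)$-entry of this Hecke operator in $B(L)$ by sandwiching it between the virtual vectors $\widehat{\Gamma_0s_i}\otimes\eta$ and $\widehat{\Gamma_0s_j}\otimes\xi$, using the scalar product supplied by the $P_L$-operator-angle construction (the analogue of the Petersson product described in the introduction). Each term $\overline{\pi}_0^{\rm p}(\sigma_k)$ contributes a formal double sum over $\gamma,\gamma'\in\Gamma_0$ of scalars $\langle\pi_0(s_i^{-1}\gamma'^{-1}\sigma_k\gamma s_j)P_L\xi,P_L\eta\rangle$. One $\Gamma_0$-summation cancels against the normalization of the virtual vectors, which is also where the factor $[\Gamma_0:(\Gamma_0)_\sigma]$ in (\ref{finalhecke}) is absorbed; merging the remaining sums over $k$ and over $\gamma$ reindexes the total as a single sum over $\theta\in s_i^{-1}\Gamma_0\sigma\Gamma_0s_j$, reproducing (\ref{heckematrix0}).

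For convergence, each block entry is a finite union of subsums of the form (\ref{TCsum}), up to absorption of $s_i^{-1}$ on the left and $s_j$ on the right; hence condition~(v) delivers convergence in $\mathcal{C}_2(L)$, while condition~(vi) provides the accompanying trace identities. I expect the main obstacle to be the inner-product bookkeeping in the second paragraph: the scalar product used to extract block entries lives on a Gelfand-triple enlargement of $H$, so turning the formal double sum into a rigorous computation requires the full strength of the $P_L$-operator-angle construction, which in turn depends on the commutativity assumption~(ii), the trace-class hypothesis~(iv), and the finiteness assumption~(vii). Once these manipulations are justified, the block-by-block match between (\ref{finalhecke}) and (\ref{heckematrix0}) reduces to a bookkeeping exercise in coset decompositions.
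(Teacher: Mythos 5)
Your proposal follows essentially the same route as the paper: the paper's proof conjugates the Hecke operator by the partial isometry $W^{\Gamma_0}$ sending $\oplus_i \pi(s_i)l_i$ to the formal sum $\sum_{\gamma\in\Gamma_0}\pi_0(\gamma)\big(\oplus_i\pi(s_i)l_i\big)$ (Theorems \ref{hecke} and \ref{a0formula}), which is exactly your block-coefficient computation against the virtual vectors $\widehat{\Gamma_0 s_i}\otimes\xi$, with the ``cancellation of one $\Gamma_0$-sum'' implemented rigorously by the projection $P_{L^{\Gamma_0}}$ in the Petersson-type scalar product (\ref{genpet}) together with the identity $\sum_{\gamma\in\Gamma_0}\pi(\gamma)P_{L^{\Gamma_0}}\pi(\gamma)^{\ast}={\rm Id}$. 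Your coset reindexing over $s_i^{-1}\Gamma_0\sigma\Gamma_0 s_j$ and the appeal to conditions (v)--(vi) for convergence likewise match the paper's argument.
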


Formula (\ref{heckematrix0}) allows us to immediately conclude the following:

  \begin{cor} For $\sigma \in G, \Gamma_0\in \S$,
the trace of the Hecke operator introduced  in formula (\ref{finalhecke})
 is computed by
 \begin{equation}\label{f8}
 {\rm Tr}\big[ [\Gamma_0: (\Gamma_0)_\sigma] P_{H_0^{\Gamma_0}}\overline{\pi}_0^{\rm p}(\sigma)P_{H_0^{\Gamma_0}}\big]=
\mathop{\sum}\limits_{s_i}\; \mathop{\sum}\limits_{\theta \in s_i\Gamma_0\sigma\Gamma_0s_i^{-1}}{\rm Tr}(P_L\pi_0(\theta)P_L).
\end{equation}

\end{cor}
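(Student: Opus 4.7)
The plan is to read off the trace directly from the block-matrix form provided by Theorem \ref{L0}. First, I invoke that theorem to replace $[\Gamma_0:(\Gamma_0)_\sigma]\, P_{H_0^{\Gamma_0}}\overline{\pi}_0^{\rm p}(\sigma)P_{H_0^{\Gamma_0}}$, up to unitary equivalence, by the operator
$$
T \;=\; \sum_{i,j}\;\sum_{\theta \in s_i^{-1}\Gamma_0\sigma\Gamma_0 s_j} P_L\pi_0(\theta)P_L \otimes e_{\Gamma_0 s_i,\Gamma_0 s_j}
$$
acting on $l^2(\Gamma_0\setminus\Gamma)\otimes L$. Because the trace is unitarily invariant, it suffices to compute ${\rm Tr}(T)$ in this picture. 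The trace on $B(l^2(\Gamma_0\setminus \Gamma))\otimes B(L)$ factors as the diagonal sum, so formally ${\rm Tr}(T) = \sum_i {\rm Tr}_L(T_{ii})$ with
$$
T_{ii} \;=\; \sum_{\theta \in s_i^{-1}\Gamma_0\sigma\Gamma_0 s_i} P_L\pi_0(\theta)P_L.
$$

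Second, I would justify the diagonal-block calculation using hypothesis (vi) of Definition \ref{TC}. This condition is exactly tailored so that the operator-valued sum defining each $T_{ii}$ converges to a trace-class operator on $L$ whose trace coincides with the absolutely convergent scalar sum $\sum_\theta {\rm Tr}(P_L\pi_0(\theta)P_L)$. Without (vi), interchanging ${\rm Tr}$ with the $\theta$-summation would be unjustified; with it, the interchange is immediate. Combining this with the previous step yields
$$
{\rm Tr}(T)\;=\;\sum_{i}\;\sum_{\theta \in s_i^{-1}\Gamma_0\sigma\Gamma_0 s_i} {\rm Tr}\bigl(P_L\pi_0(\theta)P_L\bigr).
$$

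Third, I would match this expression with the form displayed in (\ref{f8}). Since $\{s_i\}$ parameterizes the right cosets $\Gamma_0\setminus\Gamma$ in Theorem \ref{L0}, the system $\{s_i^{-1}\}$ parameterizes the left cosets $\Gamma/\Gamma_0$; relabeling the summation index accordingly turns $s_i^{-1}\Gamma_0\sigma\Gamma_0 s_i$ into $s_i\Gamma_0\sigma\Gamma_0 s_i^{-1}$, which is the shape printed in the statement of the corollary.

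The only genuine technical point is the convergence and trace-sum interchange addressed in the second step, and that is precisely the content of Definition \ref{TC}(vi). In particular I do not expect any genuine obstacle: once Theorem \ref{L0} is granted, the corollary is essentially a formal consequence of the block structure of the Hecke operators together with the trace-class summability built into the technical hypotheses.
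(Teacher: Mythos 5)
Your proposal is correct and follows essentially the same route as the paper, which presents the corollary as an immediate consequence of the block-matrix formula (\ref{heckematrix0}): the trace of the unitarily equivalent operator on $l^2(\Gamma_0\setminus\Gamma)\otimes L$ is the sum of the traces of its diagonal blocks, with condition (vi) of Definition \ref{TC} licensing the interchange of ${\rm Tr}$ with the sum over $\theta$. Your remark about passing from $s_i^{-1}\Gamma_0\sigma\Gamma_0 s_i$ to $s_i\Gamma_0\sigma\Gamma_0 s_i^{-1}$ correctly accounts for the (purely notational) discrepancy between (\ref{heckematrix0}) and the form printed in (\ref{f8}), since the conjugate $t\Gamma_0\sigma\Gamma_0 t^{-1}$ depends only on the coset $t\Gamma_0$ and the map $\Gamma_0 s\mapsto s^{-1}\Gamma_0$ is a bijection between right and left cosets.
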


\vskip10pt

When specializing for example to the case $\Gamma_0=\Gamma$,  formula (\ref{heckematrix0})
 gives the following:

 Consider the case when
$\dim_{{\{\pi_0(\Gamma)\}''}}H_0$ is finite.
The space $H_0^{\Gamma}$ of  "virtual" vectors, invariant   to  the unitary representation $\pi_0 |_\Gamma$ acting on  $H_0$,
is unitarily equivalent to the range  of the projection
\begin{equation}\nonumber
\P_{\Gamma, L} = \mathop{\sum}\limits_{\gamma \in \Gamma} P_{L}\pi_0(\gamma)P_{L}\in B(L),
\end{equation}
which is a finite dimensional subspace of $L$.
The same unitary equivalence transforms  the Hecke operator corresponding to a coset $\Gamma\sigma\Gamma$ and acting on "virtual" $\Gamma$-invariant operators into the operator
\begin{equation}\nonumber
\mathop{\sum}\limits_{\gamma \in \Gamma\sigma\Gamma} P_{L}\pi_0(\gamma)P_{L}\in B(L),\quad \sigma \in G.
\end{equation}
The convergence in the above formulae  is a consequence of the technical assumption introduced in Definition \ref{TC}.

\

\

As a corollary of formula (\ref{f8}), we obtain a formula for the  character (\cite{HC}, \cite{Sal}, \cite{Ca}, \cite{GeGr})
of the representation $\overline\pi_0^{\rm p}$. This proves that the character of $\overline \pi_0^{\rm p}$ is determined by the    positive definite function $\phi_0$ on $G$,  given by
\begin{equation}\label{pdf}
\phi_0(g)={\rm Tr}(P_L\pi_0(g)P_L), \quad g \in G.
\end{equation}

We use the notations and definitions from  the statement of  Theorem \ref{L0}. 
%
%
We assume that $\theta_{\overline{\pi}_0^{\rm p}}$, the character of the representation $\overline{\pi}_0^{\rm p}$ of $\overline{G}$, is locally integrable with respect to Haar measure on $\overline G$.
Recall that by \cite{Ca}, formula (13), (for a different proof see also Lemma \ref{cartiertr}), we have
\noindent

\begin{equation}\label{hyperfinite}
\theta_{\overline{\pi}_0^{\rm p}}(\sigma) = \mathop{\lim}\limits_{\mathop{\Gamma_0 \downarrow \; e}\limits_{\Gamma_0 \in \S}}
{\rm Tr}(P_{H_0^{\Gamma_0}}\overline{\pi}_0^{\rm p}(\sigma)P_{H_0^{\Gamma_0}}).\end{equation}

For every $g \in G$, let $\Gamma^{\rm st}_g$ be the normalizer group of $g$ in $\Gamma$ defined by $$\Gamma^{\rm st}_g = \{ \gamma \in \Gamma | \gamma g = g\gamma \}.$$

Using the formulae (\ref{f8}) and (\ref{hyperfinite}) we obtain:

\begin{cor}\label{plancherel} \label{theta111}
\noindent The  value of the character $\theta_{\overline{\pi}_0^{\rm p}}$ at an element $\sigma\in G$ is computed by the formula:
\begin{equation}\label{realresult}
\theta_{\overline{\pi}_0^{\rm p}}(\sigma)=
\mathop{\lim}\limits_{\mathop{\Gamma_0 \downarrow \; e}\limits_{\Gamma_0 \in \S}} \frac{1}{[\Gamma_0: (\Gamma_0)_\sigma] } \mathop{\sum}\limits_{\Gamma =
\mathop{\cup}
\limits_{i = 1}^{[\Gamma : \Gamma_0]}
\Gamma_0s_i}\; \mathop{\sum}\limits_{\theta \in s_i^{-1}\Gamma_0\sigma\Gamma_0s_i}{\rm Tr}(P_L\pi_0(\theta)P_L).
\end{equation}


  If the group $\Gamma^{\rm st}_g$ is trivial then

\begin{equation}\label{berezin}
\theta_{\overline{\pi}_0^{\rm p}}(g)=\big[ \mathop{\lim}\limits_{\mathop{\Gamma_0 \downarrow \; e}\limits_{\Gamma_0 \in \S}} \frac{1}{[\Gamma_0: (\Gamma_0)_g] }\big]
\mathop{\sum}\limits_{\gamma \in \Gamma
}{\rm Tr}(P_L\pi_0(\gamma g\gamma^{-1})), g \in G.
\end{equation}

\end{cor}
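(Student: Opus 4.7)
My plan is to combine the trace formula (\ref{f8}) from Theorem \ref{L0} with the hyperfinite expression (\ref{hyperfinite}) for the character, and then to reorganize the resulting double sum using the triviality of $\Gamma^{\rm st}_g$ to obtain the specialization (\ref{berezin}).

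The first identity (\ref{realresult}) should follow by direct substitution. Starting from (\ref{hyperfinite}), I would divide formula (\ref{f8}) through by the positive integer $[\Gamma_0:(\Gamma_0)_\sigma]$, producing an explicit expression for ${\rm Tr}(P_{H_0^{\Gamma_0}}\overline{\pi}_0^{\rm p}(\sigma)P_{H_0^{\Gamma_0}})$ as a double sum of matrix traces ${\rm Tr}(P_L\pi_0(\theta)P_L)$. Substituting into (\ref{hyperfinite}) and relabeling the coset representatives $s_i\leftrightarrow s_i^{-1}$ (a permissible reparameterization of $\Gamma_0\backslash\Gamma$) then yields (\ref{realresult}).

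For the specialization (\ref{berezin}), I would fix $g\in G$ with $\Gamma^{\rm st}_g=\{e\}$ and reorganize the double sum in (\ref{realresult}). The key observation is the identity $s_i^{-1}\Gamma_0 g\Gamma_0 s_i=(s_i^{-1}\Gamma_0 s_i)(s_i^{-1}g s_i)(s_i^{-1}\Gamma_0 s_i)$, which exhibits each inner sum as a sum over a double coset of the conjugate subgroup $s_i^{-1}\Gamma_0 s_i$ centered on the conjugate $s_i^{-1}g s_i$. Using the decomposition $\Gamma_0 g\Gamma_0=\bigsqcup_k\Gamma_0 g\alpha_k$ into $[\Gamma_0:(\Gamma_0)_g]$ cosets and the partition $\Gamma=\bigsqcup_i\Gamma_0 s_i$, I would reparameterize each $\theta$ uniquely and merge the outer sums into a single sum indexed by $\Gamma$. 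The bijection $\gamma\mapsto\gamma g\gamma^{-1}$---injective precisely because $\Gamma^{\rm st}_g$ is trivial---then identifies the reorganized sum in the limit with $\sum_{\gamma\in\Gamma}{\rm Tr}(P_L\pi_0(\gamma g\gamma^{-1})P_L)$, while the factor $1/[\Gamma_0:(\Gamma_0)_g]$ factors out as the $\Gamma_0$-dependent normalization.

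The main obstacle will be twofold. First, I must rigorously justify the exchange of the limit $\Gamma_0\downarrow e$ with the infinite sums; this I expect to handle by invoking the trace-class and Hilbert--Schmidt summability conditions (v) and (vi) of Definition \ref{TC}, together with a dominated convergence argument on the counting measure of $\Gamma$. Second, I must verify that the normalization factor $1/[\Gamma_0:(\Gamma_0)_g]$ factors cleanly outside the inner sum in the limit, which relies essentially on the triviality of $\Gamma^{\rm st}_g$ ensuring that the overcounting in the double coset decomposition is captured exactly by this index and by no additional stabilizer contributions.
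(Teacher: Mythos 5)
Your proposal is correct and follows essentially the same route as the paper: formula (\ref{realresult}) is obtained by substituting (\ref{f8}) into (\ref{hyperfinite}), and (\ref{berezin}) by regrouping the double sum into conjugates $\gamma g\gamma^{-1}$, with the triviality of $\Gamma^{\rm st}_g$ guaranteeing each term appears exactly once and the summability conditions of Definition \ref{TC} justifying the passage to the limit. Your version merely spells out the coset bookkeeping that the paper leaves implicit.
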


Under suitable conditions, the term on the right hand side of equation (\ref{berezin}) coincides with the character of the unitary representation $\pi_0$. Assume  there exists a  unitary representation  $\overline{\pi}_0^R$, extending $\pi_0$ to a   a locally compact group $\overline{G}^R$, that contains $G$ as a  dense subgroup  Then, the formula for the trace  character of the representation $\overline{\pi}_0$ depends only on    the trace character  of a representation  $\overline{\pi}_0^R$.

\vskip10pt

\begin{lemma} \label{characteroriginal}
Consider a unitary  representation $\pi_0$ verifying the conditions in Definition \ref{TC}.
We also consider a larger locally compact group $\overline{G}^R$, containing $G$ as a dense subgroup, and  such that  $\Gamma$ is a lattice in $\overline{G}^R$.
%
Assume the following conditions:
 \item (i) $\pi_0$  extends to a  unitary representation $\overline{\pi}_0^R$ of $\overline{G}^R$ into the unitary group of $H$. Moreover,
    the representation $\overline{\pi}_0^R$ has a locally integrable character with respect  to Haar measure on $\overline{G}^R$,
 denoted by "${\rm Tr}(\overline{\pi}_0^R(\cdot))$"=$\theta_{\overline{\pi}_0^R}$.
 \item(ii) The representation $\pi$ from Definition \ref{TC} also extends to a unitary representation $\overline{\pi}^R$ of  $\overline{G}^R$, and the equality $\overline{\pi}_0^R(g)=P_0\overline{\pi}^RP_0$ holds for $g\in \overline{G}^R.$
\item(iii) The set of elements $g \in \overline{G}^R$ with non-trivial group $\Gamma^{\rm st}_g$ has zero measure with respect to Haar measure on
$\overline{G}^R$.

Then
\item(i) For any element $g\in G$ as in assumption (ii), we have
\begin{equation}\label{berezin1}
\theta_{\overline{\pi}_0^R}(g)=
\mathop{\sum}\limits_{\gamma \in \Gamma}{\rm Tr}_{B(L)}(P_L\pi_0(\gamma g\gamma^{-1})P_L).
\end{equation}

\item(ii)
The character of the unitary representation  $\overline{\pi}_0$ restricted to $G$ (with the exception of  the points $g\in G$ such that
$\Gamma^{\rm st}_g$ is non-trivial) depends only on the character  of the representation $\overline{\pi}_0^R$.

\end{lemma}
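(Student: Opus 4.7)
The plan is to unfold the global trace of $\overline{\pi}_0^R(f)$ on $H_0$, for a test function $f\in C_c(\overline{G}^R)$, into a sum of local traces on $L$, using that $L$ plays the role of a fundamental domain for the $\Gamma$-action on $H$. The key identity, immediate from Definition~\ref{TC}(i), is
\[
\sum_{\gamma\in\Gamma}\pi(\gamma)P_L\pi(\gamma^{-1})=I_H,
\]
since the operators $Q_\gamma:=\pi(\gamma)P_L\pi(\gamma^{-1})$ are pairwise orthogonal projections onto the $\gamma$-th copy of $L$ inside $H\cong\ell^2(\Gamma)\otimes L$. I will also exploit that $\overline{G}^R$ is unimodular (because $\Gamma$ is a lattice in it) and that $P_0$ commutes with $\pi(\gamma)$ for every $\gamma\in\Gamma$, as $H_0$ is $G$-invariant.

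For part (i), I would fix $f\in C_c(\overline{G}^R)$, set $\overline{\pi}_0^R(f):=\int_{\overline{G}^R}f(h)\overline{\pi}_0^R(h)\,dh$, and evaluate ${\rm Tr}(\overline{\pi}_0^R(f))$ in two ways. Hypothesis (i) of the lemma gives
\[
{\rm Tr}(\overline{\pi}_0^R(f))=\int_{\overline{G}^R}f(h)\,\theta_{\overline{\pi}_0^R}(h)\,dh.
\]
Alternatively, inserting the partition of unity $I_H=\sum_\gamma Q_\gamma$, invoking cyclicity of the trace, using $\overline{\pi}_0^R(x)=P_0\overline{\pi}^R(x)P_0$ from hypothesis (ii), exchanging sum and integral by the absolute convergence furnished by Definition~\ref{TC}(iv)--(vi), and performing the change of variable $h\mapsto\gamma h\gamma^{-1}$ under the (unimodular) Haar measure yield
\[
{\rm Tr}(\overline{\pi}_0^R(f))=\int_{\overline{G}^R}f(h)\sum_{\gamma\in\Gamma}{\rm Tr}_{B(L)}\bigl(P_L\,\overline{\pi}_0^R(\gamma h\gamma^{-1})\,P_L\bigr)\,dh.
\]
Since this holds for every $f\in C_c(\overline{G}^R)$, the two integrands agree as locally integrable functions on $\overline{G}^R$. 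Restricting to $g\in G$, where $\overline{\pi}_0^R$ coincides with $\pi_0$ and the right-hand sum converges unambiguously by the summability condition of Definition~\ref{TC}, produces the pointwise identity (\ref{berezin1}).

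Part (ii) is then a direct consequence of (i) combined with formula (\ref{berezin}) of Corollary~\ref{plancherel}. Indeed, by cyclicity of the trace one has ${\rm Tr}(P_L\pi_0(\gamma g\gamma^{-1})P_L)={\rm Tr}(P_L\pi_0(\gamma g\gamma^{-1}))$, so the series appearing in (\ref{berezin1}) and in (\ref{berezin}) are identical. Comparing therefore gives, for every $g\in G$ with $\Gamma^{\rm st}_g=\{e\}$, the relation
\[
\theta_{\overline{\pi}_0^{\rm p}}(g)=\Bigl[\lim_{\mathop{\Gamma_0\downarrow e}\limits_{\Gamma_0\in\S}}\tfrac{1}{[\Gamma_0:(\Gamma_0)_g]}\Bigr]\,\theta_{\overline{\pi}_0^R}(g),
\]
whose scalar coefficient is purely group-theoretic and, by hypothesis (iii), the exceptional locus has Haar measure zero. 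Hence the character of $\overline{\pi}_0^{\rm p}$ restricted to $G$, away from this null set, is completely determined by the character of $\overline{\pi}_0^R$.

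The main obstacle will be the unfolding step in part (i): rigorously justifying the Fubini-type interchange of $\sum_{\gamma\in\Gamma}$ with $\int_{\overline{G}^R}$ (which leans on the trace-class property of $P_0P_L$ together with the Hilbert--Schmidt summability in Definition~\ref{TC}(v)--(vi)), and then promoting the resulting $L^1_{\rm loc}$-identity on $\overline{G}^R$ to a pointwise equality at the measure-zero subset $G$. The latter is dealt with by taking the explicit, absolutely convergent series on the right of (\ref{berezin1}) as the canonical representative of the character class at points of $G$, which is legitimate because this expression is well defined there by Definition~\ref{TC}.
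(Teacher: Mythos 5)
Your proposal is correct and follows essentially the same route as the paper: insert the partition of unity $\sum_{\gamma}\pi(\gamma)P_L\pi(\gamma^{-1})=I_H$ into ${\rm Tr}(\overline{\pi}_0^R(f))$ for a test function $f$, use cyclicity of the trace and $\overline{\pi}_0^R=P_0\overline{\pi}^RP_0$ to rewrite each term as ${\rm Tr}_{B(L)}(P_L\overline{\pi}_0^R(\gamma g\gamma^{-1})P_L)$, interchange sum and integral, and identify the locally integrable densities; part (ii) then follows by comparison with formula (\ref{berezin}) of Corollary \ref{plancherel}, exactly as in the paper. (The change of variable $h\mapsto\gamma h\gamma^{-1}$ you invoke is not actually needed, since cyclicity of the trace alone produces the conjugated argument, but this is cosmetic.)
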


\begin{rem}  We consider the  sum appearing in formula (\ref{realresult})
\begin{equation}
\phi_{\Gamma_0}(\sigma)=\mathop{\sum}\limits_{\Gamma
\mathop{\cup}
\limits_{i = 1}^{[\Gamma : \Gamma_0]}
\Gamma_0s_i}\; \mathop{\sum}\limits_{\theta \in s_i^{-1}\Gamma_0\sigma\Gamma_0s_i}{\rm Tr}(P_L\pi_0(\theta)P_L),\quad \sigma \in G.
\end{equation}

When $\sigma=e$ and $\Gamma_0$ is normal subgroup, this term is equal  to
$$[\Gamma:\Gamma_0]\mathop{\sum}\limits_{\Gamma =
\mathop{\cup}
\limits_{i = 1}^{[\Gamma : \Gamma_0]}
\Gamma_0s_i}\; \mathop{\sum}\limits_{\theta \in \Gamma_0}{\rm Tr}(P_L\pi_0(\theta)P_L)=
[\Gamma:\Gamma_0]\mathop{\sum}\limits_{\gamma \in \Gamma_0}{\rm Tr}(P_L\pi_0(\gamma)P_L).$$

Normalizing and taking the limit
$$\Phi_{\pi_0}(\sigma)=\mathop{\lim}\limits_{\mathop{\Gamma_0 \downarrow \; e}\limits_{\Gamma_0 \in \S}} \frac{\phi_{\Gamma_0}(\sigma)}{{\rm dim} H^{\Gamma_0}},\quad \sigma \in G,$$
we obtain a character of the group $G$ of the type considered in
\cite{PT}, \cite {DM}, \cite{BGK}, \cite{LB} \cite{Ve}.

 The characters we obtain in formula
Corollary \ref{plancherel} are of a different nature: they take infinite value at the identity and possibly take infinite value at other elements of the group.

\end{rem}

\

We exemplify below the content of  Theorem \ref{L0} and of Corollary \ref{theta111}, in the particular  case where  the  $\Gamma$-invariant vectors are the  automorphic forms.
We take  $G=\PGL(2,\mathbb Z[\frac{1}{p}])$, $p$ a prime number, $\Gamma$ the modular group,  and   the representation $\pi_0=\pi_n|_G, n\in \mathbb N, n\geq 2$ is obtained by restricting to $G$ a (projective)  unitary representation in the analytic, discrete series $(\pi_n)_{n\geq 2}$ of the semisimple Lie group $\PSL(2,\mathbb R)$. Let  $F$ be a fundamental domain for the action of the modular group on the upper halfplane.  Let  $\nu_0=( {\rm Im}z)^{-2}{\rm d}\bar{z}{\rm d} z$ be the canonical measure on $\bH$, that is invariant to the action of  $\PSL(2,\mathbb R)$ by Moebius transforms. Then, the projection $P_L$ introduced in Definition \ref {TC}, is the operator $M_{\chi_F}$ of multiplication with  the characteristic function of $F$,  on $H=L^2(\bH,  ({\rm Im}z)^{-2}{\rm d}\bar{z}{\rm d} z)$.  The unitary representation $\pi$ is the Koopman unitary representation of $G$ on $L^2(\bH,  ({\rm Im}z)^{-2}{\rm d}\bar{z}{\rm d} z)$, corresponding to the action of $G$ on $\bH$
 (see Examples \ref {koop} and \ref{auto}).

In this particular case the  projection $P_0$ is the Bergman projection onto the Hilbert space of the representation $\pi_n$, which is the space of analytic functions on $\bH$, square summable with respect to the measure $\nu_n=( {\rm Im}z)^{n-2}{\rm d}\bar{z}{\rm d} z.$ Moreover, the technical condition (v) from Definition \ref{TC} is  equivalent to the $L^2$-convergence condition   for Berezin's reproducing kernels (\cite{Be}) of the operators in the sum. This condition holds true in the particular case described here, because of the computations in \cite{Za} and \cite {GHJ}, Section 3.3.

For a bounded operator $A$ on the Hilbert space $H_n$, we denote by $\widehat {A}(\bar{z},\zeta)$, $z,\zeta\in \bH$ its Berezin symbol (\cite{Be}).
Formula (\ref{berezin}) implies in this case

\begin{cor}\label{deligne}
Let $\sigma$ be an element in $G$ with trivial group $\Gamma^{\rm st}_\sigma$. Then
\begin{equation}\label{mainresult}
 \theta_{\overline{\pi}_n^{\rm p}}(\sigma)= \big[ \mathop{\lim}\limits_{\mathop{\Gamma_0 \downarrow \; e}\limits_{\Gamma_0 \in \S}} \frac{1}{[\Gamma_0: (\Gamma_0)_g] }\big]
\mathop{\int}\limits_{\bH
} \widehat{\pi_n(\sigma)}(\bar{z}, z){\rm d}\nu_0(z)
\end{equation}
\begin{equation}\nonumber
= \big[ \mathop{\lim}\limits_{\mathop{\Gamma_0 \downarrow \; e}\limits_{\Gamma_0 \in \S}} \frac{1}{[\Gamma_0: (\Gamma_0)_\sigma] }\big]
\mathop{\int}\limits_{\bH}\frac{1}{\sigma z-\overline z} {\rm d}\nu_0(z)
= \big[ \mathop{\lim}\limits_{\mathop{\Gamma_0 \downarrow \; e}\limits_{\Gamma_0 \in \S}} \frac{1}{[\Gamma_0: (\Gamma_0)_\sigma] }\big] \theta_{\pi_n}(\sigma).
\end{equation}

\end{cor}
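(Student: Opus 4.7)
My plan is to apply the character formula (\ref{berezin}) from Corollary \ref{plancherel}, identify the inner sum via Lemma \ref{characteroriginal} as $\theta_{\pi_n}(\sigma)$, and then recognize this character in its two classical integral representations.

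The first step is to verify the hypotheses of Lemma \ref{characteroriginal}. The natural enveloping locally compact group is $\overline{G}^R = \PSL(2,\R)$: it contains $G = \PGL(2,\Z[\tfrac{1}{p}])$ densely and admits the modular group $\Gamma$ as a lattice. Since $\pi_n$ is, by construction, the restriction to $G$ of a discrete series representation of $\PSL(2,\R)$, we take $\overline{\pi}_n^R$ to be the latter; its character is locally integrable by Harish-Chandra's theorem. The Koopman representation $\pi$ on $L^2(\bH,\nu_0)$ extends to the full Moebius action of $\PSL(2,\R)$, and $\pi_n$ is cut out of it by the Bergman projection $P_0$, so conditions (i) and (ii) of the lemma hold. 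Finally, $\{ g \in \PSL(2,\R) : \Gamma^{\rm st}_g \neq \{e\} \}$ is a countable union of conjugates of one-parameter subgroups through non-trivial elements of $\Gamma$, hence Haar-null, giving (iii).

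With these in place, formula (\ref{berezin}) yields
\begin{equation*}
\theta_{\overline{\pi}_n^{\rm p}}(\sigma) = \Bigl[\lim_{\Gamma_0 \downarrow e} \tfrac{1}{[\Gamma_0:(\Gamma_0)_\sigma]}\Bigr] \sum_{\gamma \in \Gamma} {\rm Tr}\bigl(P_L \pi_n(\gamma \sigma \gamma^{-1}) P_L\bigr),
\end{equation*}
and Lemma \ref{characteroriginal}(i) identifies the inner sum with $\theta_{\overline{\pi}_n^R}(\sigma) = \theta_{\pi_n}(\sigma)$; this gives the third equality in (\ref{mainresult}). For the first equality I would invoke the Berezin quantization identity on $H^2(\bH,\nu_n)$: for a suitable operator $A$ with covariant symbol $\widehat{A}$, the trace ${\rm Tr}(A)$ equals the integral of $\widehat{A}(\bar z, z)$ against $\nu_0$. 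Applied to $A = \pi_n(\sigma)$ this identifies $\theta_{\pi_n}(\sigma)$ with $\int_\bH \widehat{\pi_n(\sigma)}(\bar z, z)\, {\rm d}\nu_0(z)$. The middle equality is a direct Berezin-symbol calculation using the reproducing kernel $k_n(z,\zeta) \propto (z - \bar\zeta)^{-n}$ of $H^2(\bH,\nu_n)$ together with the weight-$n$ automorphy multiplier defining $\pi_n$: the factors $(z - \bar z)^{-n}$ in numerator and denominator of the Berezin quotient cancel on the diagonal to leave, up to the implicit normalization of $\nu_0$, the kernel $(\sigma z - \bar z)^{-1}$.

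The chief obstacle is convergence: $\pi_n(\sigma)$ is in general not trace class on $H^2(\bH,\nu_n)$, so neither of the two integrals converges absolutely and both must be read as limits matching the outer limit over $\Gamma_0$. The rigorous interpretation is that the finite-level traces ${\rm Tr}(P_{H_n^{\Gamma_0}} \overline{\pi}_n^{\rm p}(\sigma) P_{H_n^{\Gamma_0}})$ are honest traces by Definition \ref{TC}(iv)--(vi) and Theorem \ref{L0}, and (\ref{mainresult}) encodes the joint limit of these compressed traces, normalized by $[\Gamma_0:(\Gamma_0)_\sigma]^{-1}$. The hypothesis $\Gamma^{\rm st}_\sigma = \{e\}$ is precisely what removes the contribution of non-trivial centralizers from the limiting procedure and places $\sigma$ in the locally integrable locus of $\theta_{\pi_n}$.
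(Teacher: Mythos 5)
Your proposal is correct and follows essentially the same route as the paper: both start from formula (\ref{berezin}) of Corollary \ref{plancherel} and identify the conjugation sum $\sum_{\gamma\in\Gamma}{\rm Tr}(P_L\pi_n(\gamma\sigma\gamma^{-1})P_L)$ with $\int_{\bH}\widehat{\pi_n(\sigma)}(\bar z,z)\,{\rm d}\nu_0(z)=\theta_{\pi_n}(\sigma)$ by unfolding over the $\Gamma$-translates of the fundamental domain, citing \cite{Ne} (and \cite{Za}) for the character identification. The only organizational difference is that you obtain this identification by invoking Lemma \ref{characteroriginal}, whereas the paper's proof of the corollary carries out the same unfolding explicitly, writing each term as ${\rm Tr}(M_{\gamma\chi_F}\pi_n(\sigma))=\int_{\gamma F}\widehat{\pi_n(\sigma)}(\bar z,z)\,{\rm d}\nu_0(z)$ and summing.
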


The fact that the  term  $\mathop{\int}\limits_{\bH
} \widehat{\pi_n(\sigma)}(\bar{z}, z){\rm d}\nu_0(z)$, in the above formula,  is   the character $\theta_{\pi_n}$ of the representation $\pi_n$ is proved  in  \cite{Ne}, using Berezin's quantization.  In this context, the formula for the  sum in (\ref{berezin})
is  computed, by a different method, in \cite{Za}.

In the last section, we apply the  construction of  $\Gamma$-invariant "virtual" vectors from the previous theorem,  to diagonal representations of $G$ of the form $\pi_0\otimes\pi_0^{\rm op}$, acting on the Hilbert space $H_0$. Here $\pi_0^{\rm op}$ is the complex,  conjugate representation associated to the representation $\pi_0$. We use a unitary equivalent representation of the Hilbert spaces consisting of $\Gamma$-invariant vectors.

 This representation is unitarily equivalent to the unitary representation $\mbox{\rm Ad\,}\pi_0$, acting on the Hilbert space consisting of the ideal of Hilbert-Schmidt operators  $\mathcal C_2(H_0) \subseteq B(H_0)$.
  The space $\V^{\Gamma_0}$ of "virtual" $\Gamma_0$-invariant vectors is  the von Neumann algebra of operators  $X\in B(H_0)$ that commute with $\pi_0(\Gamma_0)$. This algebra   is (see Example \ref{ad})  the commutant algebra $\mathcal A_{\Gamma_0}=\{\pi_0(\Gamma_0)\}'$.

 For a type II$_1$ factor $M$ with trace $\tau$, we denote the  Hilbert space associated to $\tau$ through the GNS construction (\cite{Ta}) by
 $L^2(M,\tau).$ This Hilbert space is obtained, via hilbertian  completion, from the scalar product induced by the trace on the vector space $M$.
By assumption, dim$_{\{\pi_0(\Gamma_0)\}}H_0$ is finite, hence we conclude (\cite{Ta}) that
 $\A_{\Gamma_0}$ is a type II von Neumann algebra.
 %
Then the  Hilbert space of $\Gamma_0$-invariant vectors is the $L^2$ space $L^2(\A_{\Gamma_0},\tau)$ associated to the type II$_1$ von Neumann factor $\A_{\Gamma_0}$. The inductive limit of these Hilbert spaces, when $\Gamma_0$ runs over  $\S$, has a natural interpretation in terms of the Jones basic construction (Example \ref {ad},  see also the construction in \cite{Ra4}).

 This is particularly interesting when $\pi_0$ is the representation $\pi_n$  mentioned above, ($G=\PGL(2,\mathbb Z[\frac{1}{p}])$, $p$ a prime,  $\Gamma$  the modular group, $\pi_n$ obtained, by restriction to $G$, from the discrete series of unitary representations
 $\PSL(2,\mathbb R)$).
 Consider  the unitary Koopman representation  $\pi_{\rm Koop}$  (see Example \ref {koop}) associated to the measure preserving action of $G$ on $\mathbb H$, endowed with the measure $\nu_0$ introduced above.

  Then, by \cite{Re} or by Berezin's quantization theory (\cite{Be}, see also \cite{Ra}), we have, up to unitary conjugation that
  \begin{equation}\label{sr}
   \pi_{\rm Koop}\cong\pi_n\otimes\pi_n^{\rm op},\quad n\geq 2.
   \end{equation}

Because of the above equivalence, the analysis of the representation of $G$ on the spaces of "virtual" $\Gamma_0$-invariant vectors for the representation
  $\pi_n\otimes\pi_n^{\rm op}$ leads to the analysis of the spaces of $\Gamma_0$-invariant vectors, $\Gamma_0\in S$,  for the Koopman representation, and of the corresponding unitary action of $G$ on these spaces. The latter representation corresponds to the action of the Hecke operators on Maass forms (\cite{Ma}). Using this method, we obtain in Section \ref{squareroot} (see also \cite{Ra}) concrete algebraic formulae relating  matrix coefficients of the representation $\pi_n$ with the expression of the Hecke operators associated to $\pi_{\rm Koop}$.

This method is useful in understanding the Koopman unitary representation $\pi_{\rm Koop}$, from which the action of the Hecke operators on Maass forms is derived. We are exploiting a natural "square root" of the representation $\pi_{\rm Koop}$, given by the representation in the discrete series, as in formula (\ref{sr}). The representation    $\pi_n\otimes\pi_n^{\rm op}$ is more malleable to handle, since using the operator algebra interpretation, the Hilbert spaces of "virtual" $\Gamma_0$-invariant vectors, $\Gamma_0 \in \S$, are canonically defined. We explain this construction below.

  Let $\mathcal R(G)$ be the von Neumann algebra generated by right convolution  operators $\rho(g), g\in G$, acting on $\ell^2(G)$. This is the commutant algebra of the algebra of left convolutors $\mathcal L(G)$ generated by the left convolution operators $\lambda_g, g \in G$, acting on the same Hilbert space.

  We consider the following algebras associated with the inclusions, $\Gamma \subseteq G$,  $K\subseteq \overline {G}$. Let $\H_0(K,\overline G)=\C(K\backslash \overline {G}/ K)$  be the algebra of double cosets of $K$ in $G$ (\cite{An}, \cite {Krieg}).    We denote the Hecke algebra
  $\mathbb C(\Gamma\backslash G/ \Gamma)$ generated by  double cosets of $\Gamma$ in $G$ by $\mathcal H_0(G,\Gamma)$.
  The latter algebra  is isomorphic to the Hecke algebra $\mathcal \H_0(K,\overline G)$. The Hecke algebra has a natural involution operation which is inducing a $\ast$-algebra structure.

The Hecke algebra  has a canonical, faithful $\ast$- representation (\cite{BC}) into $B(\ell^2 (\Gamma \backslash G))$.
We denote the elements of the standard basis of
$\ell^2 (\Gamma \backslash G)$ by $[\Gamma\sigma]$, $\sigma \in G$. The hypothesis $[\Gamma:\Gamma_\sigma]= [\Gamma:\Gamma_{\sigma^{-1}}]$, $\sigma \in G$,
that we assumed  on the equality of the indices  has the effect that the  state $\langle \cdot [\Gamma],[\Gamma]\rangle$, which is defined on $B(\ell^2 (\Gamma \backslash G))$, restricts to a trace on the image of the Hecke algebra.
   The reduced $C^\ast$-algebra of the Hecke algebra is the norm closure, in the  representation into $B(\ell^2 (\Gamma \backslash G))$ of the Hecke algebra $\mathcal H_0(G,\Gamma)$.   It will  be denoted by $\mathcal H_{\rm red}(G,\Gamma)$.  Clearly  $\mathcal H_0(G,\Gamma)\cong \H_0(K,\overline G)$.


   The $C^\ast$-algebra $C^\ast(\overline G)$ contains a canonical operator system (\cite{Pi}), associated to the maximal compact subgroup $K$ which we introduce in the next definition. We will explain below that $\ast$-representations of this operator system, as in formula
  (\ref {multiplicativity1}) in Lemma \ref{rephecke},  encode all the information about the representation
  $\pi_0$. These morphisms are the basic building blocks of the Hecke
  algebra representation associated to a representation of the form
  $\pi_0\otimes\pi_0^{\rm op}$ (see Theorem \ref{double} and Theorem \ref{multhecke}).  The essential tool for analyzing  unitary representations of $G$  of the form $\pi_0\otimes\pi_0^{\rm op}\cong {\rm Ad\, }\pi_0$ is a canonical representation of the Hecke algebra $\H_0(G,\Gamma)$
  into  $\mathcal R(G) \otimes B(L)$ that will be described below.

   \begin{defn}\label{thecanonicalos}
    Let $L(K, \overline {G})=\C(\chi_{\sigma K} | \sigma \in G)$ be  the linear subspace of  $C^{\ast}(\overline{G})$ generated by the characteristic functions of right cosets. Then  $L(K, \overline {G})^\ast=\C(\chi_{K\sigma } | \sigma \in G)$.
%
We also consider  the space
     $$\tilde L(K, \overline {G})= L^\infty (\overline {G},\mu) L(K, \overline {G})\subseteq C^\ast( \overline {G}\rtimes L^\infty (\overline {G},\mu)). $$
   We consider the following operator systems (\cite{Pi}) :
   \begin{equation}\label{canonicalos}
\O(K, \overline {G})=L(K, \overline {G})\cdot (L(K, \overline {G}))^\ast \subseteq C^{\ast}(\overline{G}),
\end{equation}
\begin{equation}\nonumber
\tilde \O(K, \overline {G})=
 \tilde L(K, \overline {G})\cdot (\tilde L(K, \overline {G}))^\ast\subseteq C^\ast( \overline {G}\rtimes L^\infty (\overline {G},\mu)).
\end{equation}
\noindent In the above formula the product is calculated in the $C^\ast$-algebra $C^\ast(\overline G)$ and, respectively, in the $C^\ast$-algebra  $C^\ast( \overline {G}\rtimes L^\infty (\overline {G},\mu))$.

Then,  clearly:


(i) $\O(K, \overline {G})$ is linearly generated by the characteristic functions of the form
$\chi_{\sigma_1 K\sigma_2},\ \sigma_1, \sigma_2 \in G$.

(ii) $L(K, \overline {G})$ and $(L(K, \overline {G}))^\ast$ are subspaces of $\O(K, \overline {G})$.

(iii) We have a canonical pairing
\begin{equation}\label{pairingos}
L(K, \overline {G})\times L(K, \overline {G})\rightarrow \O(K, \overline {G}).
\end{equation}
The pairing maps $x,y \in L(K, \overline {G})$ into $xy^\ast\in \O(K, \overline {G}) $.

Let $\mathcal E$  be a $C^\ast$-algebra.  We will say that a linear map $\Phi: \O(K, \overline {G})\rightarrow \mathcal E$ is a $\ast$-representation of the operator system $\O(K, \overline {G})$ if, by definition
 $$\Phi(xy^\ast)=\Phi(x)(\Phi(y))^\ast,\quad  x,y \in L(K, \overline {G}).$$
\noindent  A $\ast$-representation for the operator system $\tilde\O(K, \overline {G})$ will have to be in addition linear as a bimodule over the algebra  $L^\infty (\overline {G},\mu)$.

 The Hecke algebra $\mathcal H_0(K, \overline {G})$ is the intersection
$L(K, \overline {G})\cap  (L(K, \overline {G})^\ast$. Clearly it is closed with respect to the above pairing operation.
Obviously, a  $\ast$-representation  of the operator system $\O(K, \overline {G})$ becomes, when restricted to the Hecke algebra,
a $\ast$-algebra representation.

 \end{defn}

 \

 \

Let  $G, \Gamma, \pi, \pi_0$, $P_0$, $P_L$ be as in  Definition \ref{TC}. We assume, for simplicity of the presentation, for the following results,  that the groups $\Gamma,G$ have infinite, non-trivial conjugacy classes. This assumption implies that the von Neumann algebras associated below with the above groups have unique traces.

 We construct  $\ast$-representations of the operator system $\O(K, \overline {G})$  that  take values in the
     associated von Neumann algebras described below.
%
We consider the following von Neumann algebras:
\begin{equation}\label{defa}
\A = \{\pi(\Gamma)\}' \cong \cR(\Gamma)\; \overline{\otimes}\; B(L) \subseteq \B = \cR(G)\; \overline{\otimes}\; B(L),
\end{equation}
%
\begin{equation}\nonumber
 \A_0 = \pi_0(\Gamma)' = P_0\A P_0.
 \end{equation}
 Note that $\A,\B$ are type II von Neumann algebras. 
  If the space $L$ is infinite, then $\A,\B$ are  type II$_\infty$ von Neumann algebras.
In the representation for $\{\pi(\Gamma)\}'$ introduced in formula (\ref{defa}), the projection  $P_0$, which by hypothesis commutes with  $\pi(\Gamma)$ and thus belongs to $\A$,  has the  formula:
\begin{equation}\label{newproj}P_0=
\mathop{\sum}\limits_{\gamma \in \Gamma}\rho(\gamma) \otimes P_L\pi_0(\gamma)P_L
\in \cC_1(\A).
\end{equation}

  For a given von Neumann algebra $\mathcal M$, endowed with a faithful, semifinite trace $T=T_\mathcal M$, we denote by $\mathcal
   C_1(\mathcal M)$ the ideal of trace class operators associated to $\mathcal M$ (\cite{Ta}).  If $\mathcal M$ is a type II$_\infty$ von Neumann algebra endowed with faithful semifinite trace $T$, we denote by $\cC_1(\mathcal M, T)$ the ideal of trace class operators in $\mathcal M$ (\cite{Ta}). If the dependence on the trace $T$ is contextual, we write just $\cC_1(\mathcal M)$.

We associate to the  unitary representation $\pi_0$  a  $\ast$-representation of the operator system constructed in Definition \ref {thecanonicalos}.

 \begin{lemma}\label{rephecke}  We assume that $G, \Gamma, \pi_0$, $P_0$, $P_L$ are as in  Definition \ref{TC}.

\noindent For   a coset $C = g\overline{\Gamma}_0$ in $\overline{G}$  define, in analogy with formula (\ref{newproj}),
$$
\tilde{\Phi}_{\pi_0, L}(C) = \mathop{\sum}\limits_{\theta \in C}
\rho(\theta)\otimes P_L\pi_0(\theta)P_L \in \cC_1(\mathcal B).
$$

\noindent Then
\item (i)
 The application $\tilde{\Phi}_{\pi_0, L}$ restricted to $
\O(K, G)$  is  a $\ast$-representation of the operator system
$
\O(K, G)$.
 In particular
\begin{equation}\label{multiplicativity1}
\tilde{\Phi}_{\pi_0, L}(\chi_{\sigma_1 K})\big[\tilde{\Phi}_{\pi_0, L}(\chi_{\sigma_2K})\big]^{\ast} = \tilde{\Phi}_{\pi_0, L}(\chi_{\sigma_1 K\sigma_2^{-1}}),\quad \sigma_1, \sigma_2 \in G.
\end{equation}

\item (ii)
Because of formula (\ref{newproj}), we have  $\tilde{\Phi}_{\pi_0, L}(\chi_{K})=P_0$. Hence,  because of (i),
$$
\begin{aligned} \tilde{\Phi}_{\pi_0, L}(\chi_{\sigma_1 K}) & = \tilde{\Phi}_{\pi_0, L}(\chi_{\sigma_1 K})P_0 , \\
 \tilde{\Phi}_{\pi_0, L}(\chi_{K\sigma_1}) & = P_0\tilde{\Phi}_{\pi_0, L}(\chi_{K\sigma_1}),\quad \sigma_1 \in G.
 \end{aligned}
$$


\item (iii) Because of (ii), the  restriction $\tilde{\Phi}_{\pi_0, L}$  to the Hecke
 algebra $ \H_0(K,\overline G)\cong \mathcal H_0(\Gamma, G)$ takes
   values in the algebra $ \A_0=P_0\B P_0 $.

\item (iv)  $\tilde{\Phi}_{\pi_0, L}|_{\H_0(K,\overline G)}$
    is trace preserving, and hence it        extends continuously to C$^\ast$-representation of the reduced $C^\ast$-Hecke algebra $\H_{\rm red}(\Gamma , G)$ with values in $\A_0$.


\end{lemma}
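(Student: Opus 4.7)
My plan is to concentrate on (i), from which (ii)--(iv) will follow by specialization and direct algebraic manipulations. First, I would use the identification $\sigma K \cap G = \sigma \Gamma$ (holding since $K \cap G = \Gamma$ in the Schlichting completion) to rewrite
\[
\tilde\Phi_{\pi_0, L}(\chi_{\sigma K}) = \sum_{\theta \in \sigma\Gamma} \rho(\theta) \otimes P_L\pi_0(\theta)P_L,
\]
whose adjoint is indexed by $\theta^{-1}$ running over $\Gamma \sigma^{-1}$. Expanding the product $\tilde\Phi_{\pi_0, L}(\chi_{\sigma_1 K}) \cdot \tilde\Phi_{\pi_0, L}(\chi_{\sigma_2 K})^\ast$ as a double sum, grouping terms by the product $\xi = \theta_1 \theta_2^{-1} \in \sigma_1 \Gamma \sigma_2^{-1}$, and parametrizing the remaining freedom via $\theta_2 = \sigma_2\gamma$ with $\gamma \in \Gamma$, the inner sum factors through the bracket
\[
P_L \pi_0(\xi)\pi_0(\sigma_2)\Bigl[\sum_{\gamma \in \Gamma}\pi_0(\gamma)P_L\pi_0(\gamma^{-1})\Bigr]\pi_0(\sigma_2^{-1})P_L.
\]

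The pivotal identity is then $\sum_{\gamma \in \Gamma}\pi_0(\gamma)P_L\pi_0(\gamma^{-1}) = P_0$, which I would derive from the orthogonal decomposition of identity $\sum_{\gamma \in \Gamma}\pi(\gamma)P_L\pi(\gamma^{-1}) = 1_H$ (immediate from $H \cong \ell^2(\Gamma) \otimes L$ with $\pi|_\Gamma \cong \lambda_\Gamma \otimes {\rm Id}_L$) by compressing both sides with $P_0$ and using $[P_0, \pi(\gamma)] = 0$. A short manipulation yielding $\pi_0(\sigma_2)P_0\pi_0(\sigma_2^{-1}) = P_0$ (via unitarity of $\pi_0(\sigma_2)$ on $H_0$ and $P_0$ being the identity there) then collapses the inner sum to $P_L\pi_0(\xi)P_L$, establishing (\ref{multiplicativity1}). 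The unconditional rearrangements in the Hilbert--Schmidt and trace-class norms are exactly what conditions (v)--(vi) of Definition \ref{TC} supply.

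Parts (ii)--(iv) then cascade. (ii) follows from (i) by setting $\sigma_2 = e$ or $\sigma_1 = e$ and invoking (\ref{newproj}), which reads $\tilde\Phi_{\pi_0, L}(\chi_K) = P_0$. For (iii), every element of $\H_0(K, \overline G)$ is a linear combination of characteristic functions $\chi_{K\sigma K} \in L(K, \overline G) \cap L(K, \overline G)^\ast$, so by (ii) its image absorbs $P_0$ on both sides and lies in $P_0 \B P_0$; since each summand $\rho(\theta) \otimes P_L\pi_0(\theta)P_L$ commutes with $\pi(\Gamma) = \lambda_\Gamma \otimes 1_L$, the image further lies in $P_0\A P_0 = \A_0$. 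For (iv), I would compute $\tau_{\A}(\tilde\Phi_{\pi_0, L}(\chi_{\Gamma\sigma\Gamma}))$ using the product trace $\tau_{\cR(\Gamma)} \otimes {\rm Tr}_{B(L)}$ on $\A$, exploit the fact that $\tau_{\cR(\Gamma)}(\rho(\theta))$ vanishes unless $\theta = e$, and verify that the surviving terms match the canonical Hecke-algebra trace $\langle \cdot [\Gamma], [\Gamma]\rangle$ on the basis $\{\chi_{\Gamma\sigma\Gamma}\}$; the continuous extension to $\H_{\rm red}(\Gamma, G)$ with values in $\A_0$ follows from the standard fact that a trace-preserving $\ast$-homomorphism from an algebraic $\ast$-algebra into a finite-trace von Neumann algebra is automatically continuous for the reduced norm. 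The main obstacle is the bookkeeping of convergence and of the operator-algebra identifications --- in particular, reconciling the embedding $\A = \cR(\Gamma)\, \overline\otimes\, B(L) \subseteq \B = \cR(G)\, \overline\otimes\, B(L)$ with the various traces at play, and justifying the exchange of summation with multiplication in the key identity $\sum_\gamma \pi_0(\gamma)P_L\pi_0(\gamma^{-1}) = P_0$ at the level of Hilbert--Schmidt convergence.
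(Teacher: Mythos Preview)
Your proof is correct and follows essentially the same route as the paper: both identify the coefficient of $\rho(\xi)$ on each side of (\ref{multiplicativity1}) and reduce the verification to the resolution of identity $\sum_{\gamma\in\Gamma}\pi(\gamma)P_L\pi(\gamma^{-1})=1_H$, the only cosmetic difference being that the paper writes $\pi_0(\sigma\gamma)=\pi_0(\sigma)\pi(\gamma)$ and uses this identity on $H$ directly, whereas you first compress it to $\sum_\gamma\pi_0(\gamma)P_L\pi_0(\gamma^{-1})=P_0$ and then absorb $P_0$. One small aside on (iii): your commutation argument is superfluous and does not by itself place the image inside $\cR(\Gamma)\,\overline\otimes\,B(L)$, since every element of $\cR(G)$ already commutes with $\lambda(\Gamma)$; but the containment in $P_0\B P_0$, which is all the Lemma asserts, follows immediately from (ii), as both you and the paper note.
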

  The  $\ast$-algebra representation of the Hecke algebra from point (iii)  is used to describe the Hecke operators on $\Gamma$-invariant vectors associated to the diagonal unitary  representation $\pi_0\otimes\pi_0^{\rm op}$ of $G$. As explained above, the latter representation is
  unitarily equivalent to ${\rm Ad\, }\pi_0$, and the Hilbert space of $\Gamma$-invariant vectors is canonically identified to the $L^2$ space (\cite{Ta}) associated to the commutant von Neumann algebra $\{\pi_n(\Gamma)\}'$.
  The formula of the Hecke operators is computed in the next theorem. The case where $\dim_\C L=1$ was used in [Ra] to obtain estimates on the essential spectrum of Hecke operators acting on Maass forms.   The general case of arbitrary Murray von Neumann dimension is treated in \cite{Ra1}.

  The Hecke operators are automatically completely positive maps. They are obtained using von Neumann algebras expectations (\cite{Sak}, \cite{Ta}).
Let $$E_{P_0(\cR(\Gamma) \otimes B(L))P_0}^{P_0(\cR(G) \otimes B(L))P_0}$$
\noindent  be the canonical normal conditional expectation, mapping the type II$_1$ factor $P_0(\cR(G) \otimes B(L))P_0$
\noindent  onto the subfactor $P_0(\cR(\Gamma) \otimes B(L))P_0.$

  \

\begin{thm}\label{double}
   We use the definitions and notations introduced above.
%
%
The Hecke operator  $\Psi({[\Gamma\sigma\Gamma]})$, associated to the representation ${\rm Ad\, }\pi_0$, corresponding to a coset $[\Gamma\sigma\Gamma]$ for $\sigma$ in $G$,
is a selfadjoint operator acting on   the space
$L^2(\A_0,\tau)=L^2( \{ \pi_0(\Gamma) \}' ,\tau).$

 Then, the formula for $\Psi({[\Gamma\sigma\Gamma]})$ is determined  by its values on the  algebra $\A_0$.
%
%
%
 For $\sigma \in G$, the Hecke operator $\Psi({[\Gamma\sigma\Gamma]})$ associates to
$$X \in \pi_0(\Gamma)' = \A_0 = P_0\B P_0 = P_0(\cR(\Gamma) \otimes B(L))P_0
$$ \noindent  the operator
\begin{equation}\label{formulapsi}
\Psi({[\Gamma\sigma\Gamma]})(X)=
E_{P_0(\cR(\Gamma) \otimes B(L))P_0}^{P_0(\cR(G) \otimes B(L))P_0}(\tilde{\Phi}_{\pi_0, L}(\Gamma\sigma\Gamma)X(\tilde{\Phi}_{\pi_0, L}(\Gamma\sigma\Gamma))^\ast).
\end{equation}

\end{thm}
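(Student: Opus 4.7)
The plan is to derive formula (\ref{formulapsi}) by combining the general Hecke operator expression from Theorem \ref{L0}, applied to the diagonal representation $\pi_0\otimes\pi_0^{\rm op}\cong{\rm Ad}\,\pi_0$, with the operator-algebraic translation provided by Lemma \ref{rephecke}. I would first recall, via Example \ref{ad}, the identification of the Hilbert space of ``virtual'' $\Gamma$-invariant vectors for ${\rm Ad}\,\pi_0$ with $L^2(\A_0,\tau)$, in which elements $X\in\A_0$ regarded as Hilbert--Schmidt operators in $\cC_2(H_0)$ sit densely. Since both sides of (\ref{formulapsi}) depend continuously on $X$ in the $L^2$-norm, it suffices to verify the identity on $X\in\A_0$.

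The main computation applies formula (\ref{heckematrix0}) to the representation $\pi_0\otimes\pi_0^{\rm op}$: on the Hilbert--Schmidt realization, the block $P_L\pi(\theta)P_L$ is replaced by the Ad-action $X\mapsto (P_L\pi_0(\theta)P_L)\,X\,(P_L\pi_0(\theta)P_L)^{\ast}$, so that $\Psi([\Gamma\sigma\Gamma])$ acts as the appropriately normalized sum, over $\theta\in\Gamma\sigma\Gamma$, of these conjugations. The key observation is to recognize this as conjugation by $\tilde{\Phi}_{\pi_0,L}(\Gamma\sigma\Gamma)$ followed by a suitable expectation. Indeed, by linearity and the definition of $\tilde{\Phi}_{\pi_0,L}$, the product $\tilde{\Phi}_{\pi_0,L}(\Gamma\sigma\Gamma)\,X\,\tilde{\Phi}_{\pi_0,L}(\Gamma\sigma\Gamma)^{\ast}$ inside $\B=\cR(G)\otimes B(L)$ expands to a double sum indexed by $\theta_1,\theta_2\in\Gamma\sigma\Gamma$ of terms $\rho(\theta_1\theta_2^{-1})\otimes(P_L\pi_0(\theta_1)P_L)\,X\,(P_L\pi_0(\theta_2)P_L)^{\ast}$. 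The conditional expectation $E^{P_0\B P_0}_{P_0\A P_0}$ acts on the $\cR(G)$ factor by the canonical map $\cR(G)\to\cR(\Gamma)$ annihilating $\rho(g)$ for $g\notin\Gamma$, so only pairs with $\theta_1\theta_2^{-1}\in\Gamma$ survive.

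The main obstacle is the subsequent book-keeping argument: setting $\tau:=\theta_2$ and $\gamma:=\theta_1\theta_2^{-1}\in\Gamma$, one uses $X\in\pi_0(\Gamma)'$ to absorb $\pi_0(\gamma)$ across $X$, and re-sums over $\gamma\in\Gamma$ to recover the full Ad-averaging of $X$ by representatives of $\Gamma$ in $\Gamma\sigma\Gamma$. The normalization factor $[\Gamma:(\Gamma)_\sigma]$ from (\ref{finalhecke}) matches the multiplicity with which each coset representative is counted in this reorganization, and the identity collapses to the expected single Hecke sum $\sum_{\tau}\pi_0(\tau)X\pi_0(\tau)^{\ast}$, matching the $\Gamma_0=\Gamma$ specialization of (\ref{heckematrix0}). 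The analytic justification of these rearrangements, in the trace-class and Hilbert--Schmidt topologies, invokes summability conditions (v)--(vi) of Definition \ref{TC} together with the $\ast$-representation identity (\ref{multiplicativity1}) of Lemma \ref{rephecke}, which also guarantees that the resulting operator lies in $P_0\A P_0=\A_0$, as required for a Hecke operator acting on $L^2(\A_0,\tau)$.
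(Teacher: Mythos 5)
The paper offers no in-text proof of this theorem: its ``proof'' is the citation to Theorem 3.2 of \cite{Ra1}, so your derivation has to be judged on its own terms. The strategy you outline --- expand $\tilde{\Phi}_{\pi_0, L}(\Gamma\sigma\Gamma)\,X\,\tilde{\Phi}_{\pi_0, L}(\Gamma\sigma\Gamma)^{\ast}$ inside $\B$, apply the conditional expectation, and collapse the surviving terms via the wandering property of $L$ (identity (\ref{fundid})) --- is the right kind of computation, but your central expansion is wrong as written. You expand the conjugated element as a double sum over $\theta_1,\theta_2\in\Gamma\sigma\Gamma$ of terms $\rho(\theta_1\theta_2^{-1})\otimes(P_L\pi_0(\theta_1)P_L)X(P_L\pi_0(\theta_2)P_L)^{\ast}$, which places $X$ in the $B(L)$ tensor factor. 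But $X$ lives in $P_0(\cR(\Gamma)\otimes B(L))P_0$ and has a nontrivial $\cR(\Gamma)$-component, $X=\sum_{\gamma\in\Gamma}\rho(\gamma)\otimes x_\gamma$; the correct expansion is a triple sum whose group part is built from $\theta_1$, $\gamma$, $\theta_2^{-1}$, and the expectation onto $\cR(\Gamma)\otimes B(L)$ retains the terms with $\theta_1\gamma\theta_2^{-1}\in\Gamma$, not $\theta_1\theta_2^{-1}\in\Gamma$. The subsequent re-summation, the use of $X\in\pi_0(\Gamma)'$, and the normalization count (formula (\ref{formulapsi}) carries no factor $[\Gamma_0:(\Gamma_0)_\sigma]$, unlike (\ref{finalhecke}), so the multiplicities in the double-coset double sum must be shown to cancel exactly) all have to be redone on the corrected expression; you assert rather than verify each of these.

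There is also a missing bridge at the very first step. Applying (\ref{heckematrix0}) to $\pi_0\otimes\pi_0^{\rm op}$ with ``$P_L\pi(\theta)P_L$ replaced by $Y\mapsto(P_L\pi_0(\theta)P_L)Y(P_L\pi_0(\theta)P_L)^{\ast}$'' implicitly takes $\cC_2(L)=P_L\cC_2(H)P_L$ as the $\Gamma$-wandering subspace for ${\rm Ad\,}\pi$. That subspace is wandering but not generating: the translates ${\rm Ad\,}\pi(\gamma)(\cC_2(L))$ only fill the diagonal blocks $\cC_2(\pi(\gamma)L)$ of $\cC_2(H)$, so Theorem \ref{L0} does not apply verbatim with this choice. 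The correct starting point is the realization of the $\Gamma$-invariant vectors of ${\rm Ad\,}\pi_0$ as $L^2(\A_0,\tau)$ from Example \ref{ad}, where the Hecke operator is the averaged conjugation $X\mapsto\sum_i\pi_0(\theta_i)X\pi_0(\theta_i)^{\ast}$ over representatives of the left cosets in $\Gamma\sigma\Gamma$; what must then be proved is that this map coincides with the right-hand side of (\ref{formulapsi}) under the identification $\{\pi_0(\Gamma)\}'\cong P_0\A P_0$. That identification and its compatibility with the two pictures is precisely the content the paper delegates to \cite{Ra1}; your sketch presupposes it rather than establishing it.
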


  \

  This is Theorem 3.2 in \cite{Ra1}, generalizing results in \cite{Ra}. The statement is adapted to the framework of the present paper. The present formalism proves that once a choice for the space $L$ has been made, the Hecke algebra representation is canonical.

  We construct  below  a canonical $\ast$-representation $\overline {\mathcal D\Phi}$ of the Hecke algebra
  $\H_0(K,\overline G)$ into a  canonical "double" algebra (see below) such that all the representations of the Hecke algebra,
  as in formula (\ref{formulapsi}), are obtained by composing  $\overline {\mathcal D\Phi}$ with a quotient map.

  We will perform  the construction of the representation $\overline {\mathcal D\Phi}$  only  in the case where the   Neumann dimension  dim$_{\{\pi_0(\Gamma\}''}H_0$ is equal to 1.  In this context $\mathcal A_0$ is simply the algebra $\mathcal R(\Gamma)$.
The  argument may be easily extended to cover the case of von Neumann dimension larger than $1$, but the formulae become more complicated.

  The fact that the von Neumann dimension of the type II von Neumann algebra generated by $\pi_0(\Gamma)$  is 1 means that the representation $\pi_0|_\Gamma$ admits a cyclic trace vector $\zeta$.
  In this case, the $\ast$-representation $\tilde{\Phi}_{\pi_0, L}$ introduced in Definition \ref{rephecke} is replaced
  by   a representation $t$, constructed below,  of the canonical system introduced in Lemma \ref{thecanonicalos}, taking values in $\L(G)$. Recall that the algebra $\L(G)$   is anti-isomorphic to $\mathcal R(G)$.


We define, for $\sigma_1,\sigma_2\in G$, and for a subset of $G$ of the form $A=\sigma_1\Gamma\sigma_2 $,
 \begin{equation}\label{ta}
 t(\sigma_1K\sigma_2)= \sum_{\theta\in A}
 \overline{ \langle \pi_0(\theta)\zeta,\zeta\rangle}\lambda_\theta \in \L(G).
 \end{equation}
 Then, by Lemma \ref{rephecke}, letting the space $L$ be $\mathbb C\zeta$, we infer (see  Remark \ref{1dim}) that  $t$ defines a
 $\ast$-representation of the operator system
%
$\O(K, G)$ with values in the von Neumann algebra $ \mathcal L(G)$
(the complex conjugation in formula (\ref{ta}) is due to the fact that, via the canonical anti-isomorphism, we are switching from the algebra  $\mathcal R(G)$ to the algebra $ \mathcal L(G)$).

 The Hecke algebra $\ast$-representation $\Psi$, introduced in  formula (\ref {formulapsi}),
 takes now values in  $\mathcal L(\Gamma)$ and is given by the formula:

%
\begin{equation} \label{oldpsi}
\Psi([\Gamma\sigma\Gamma])(X)= E^{\mathcal L(G)}_{\mathcal L(\Gamma)}(t({\Gamma\sigma\Gamma}) X (t({\Gamma\sigma\Gamma}))^\ast),\quad \sigma \in G.
\end{equation}
The linear operator $\Psi([\Gamma\sigma\Gamma])$, $\sigma \in G$  extends obviously to the GNS space $\ell^2(\Gamma)$ (see e.g. \cite {Ta}),
taken with respect the canonical trace.

The Hecke operators
$\Psi([\Gamma\sigma\Gamma])$, $\sigma \in G$ are all completely positive and unital. Hence the representation in formula (\ref{oldpsi}) extends, by forgetting the algebra structure on the range space, to a $\ast$-representation $\Psi_0$ of $\mathcal H_0(G,\Gamma)$ with values  into $B(\ell^2(\Gamma)\ominus \mathbb C 1)$.
As explained in Example \ref {auto} (see also \cite{Ra}, \cite {Ra4}), the validity of the Ramanujan-Petersson Conjecture on Maass forms, in the particular example $G=\PGL(2,\mathbb Z[\frac{1}{p}])$,  is equivalent to the fact that the representation $\Psi_0$ extends to a representation of the reduced  $C^\ast$-Hecke algebra $\mathcal H_{\rm red}(G,\Gamma)$.

We will  prove that the representation $\Psi$ of the Hecke algebra introduced in formula (\ref{oldpsi})  is obtained  from a canonical representation $\overline{ \mathcal D \Phi}$ of the Hecke algebra.

We introduce the following two crossed product $C^\ast$ -algebras. They are a two variable extension  of the Roe algebras (\cite {Roe}, \cite {Bo}).
\begin{defn}\label{doublecrossedproduct}
We consider the following crossed product  $C^{\ast}$-algebras:

\begin{equation}\label{doubleroe}
\mathcal D\mathcal R(G)=\chi_K(C^{\ast}((G \times G^{\rm op}) \rtimes L^{\infty}(\overline{G},\mu)))\chi_K,
\end{equation}
\begin{equation}
\mathcal D\mathcal R(\overline G)=\chi_K(C^{\ast}((\overline G \times  \overline G^{\rm op}) \rtimes L^{\infty}(\overline{G},\mu)))\chi_K
\end{equation}

The $C^\ast$-algebra $\mathcal D\mathcal R(G)$  has a canonical, Koopman type, representation $\alpha$ into $B(\ell^2(\Gamma))$ obtained as follows: let  $G \times G^{\rm op}$ act as groupoid on $\Gamma$ by left and right multiplication, the operation being defined whenever the result of the multiplication belongs to $\Gamma$. On the other hand $C(K)$ acts by multiplication on $l^\infty(\Gamma)$ and hence on $\ell^2(\Gamma)$.

 \end{defn}

\
We observe that both algebras introduced in the above definition are corners (reduced by the projection $\chi_K\in L^{\infty}(\overline{G},\mu)$) in the larger crossed product $C^\ast$-algebras, corresponding to the measure preserving actions of $G \times G^{\rm op}$ and respectively $\overline G \times  \overline G^{\rm op}$ on $\overline G$. Since the action is measure preserving, the two algebras have obvious reduced crossed product $C^\ast$-algebra counterparts.

In the following statement, we prove that the  Hecke algebra representation $\Psi$ introduced in formula (\ref{oldpsi}), which is  in fact the Hecke algebra representation   associated with  the representation $\pi_0\otimes\pi_0^{\rm op}$, is  obtained from an intrinsic representation  of the Hecke algebra, denoted by $\mathcal D\Phi$, with values in  the "double"  algebra $\mathcal D\mathcal  R(G)$. To obtain the Hecke algebra representation $\Psi$, one composes the $\ast$-algebra representation   $\mathcal D\Phi$ with the Koopmann type representation $\alpha$ of the algebra $\mathcal D\mathcal R (G)$
introduced in Definition \ref{doublecrossedproduct}.

 Since the $\ast$-algebra representation    $\mathcal D\Phi$ extends (as it preserves the trace)
 to the reduced $C^\ast$-Hecke algebra,  the obstruction (if any) to extending the representation $\Psi_0$ to the reduced
$C^\ast$-algebra Hecke algebra $\mathcal H_{\rm red}(G,\Gamma)$ (which, as explained above, in the case of the representation $\pi_n$, is the obstruction for the Ramanujan-Petersson Conjecture on Maass forms to hold true, (\cite{Ra})),  lies in   the kernel of the above canonical representation $\alpha$, eventually  intersected with the image  of $D\Phi$.

If $f$ is a function on $\overline G$, we denote the corresponding convolution element in $C^\ast (\overline G)$ by $L(f)$. We have:


  \begin{thm}\label{multhecke}

 The following statements hold true:

\item(i)
The  correspondence
 $$
[K\sigma K] \to \chi_K(L(\chi_{K\sigma K}) \otimes L(\chi_{K\sigma K})^{\rm op})\chi_K,\quad \sigma \in G,
$$
\noindent extends by linearity to  a $\ast$-algebra representation $\overline{ \mathcal D \Phi}$ of the Hecke algebra $\H_0(K,G)$  into $\mathcal D\mathcal R(\overline G)$. This representation is trace preserving, and hence it obviously extends  to $\H_{\rm red}(\Gamma, G)$ when values are considered in the reduced crossed product $C^\ast$-algebra associated with $\mathcal D\mathcal R(\overline G)$.

\item (ii)  The   $\ast$-representation $t$ of the operator system $\O(K,\overline G)$, introduced in formula (\ref{ta}), extends to a
$\mathcal D\mathcal R( G)$-valued $\ast$-representation $t_2$ of an  operator system $\mathcal O$ contained   in   $\mathcal D\mathcal R(\overline G)$. The  operator system $\mathcal O$ contains the image of $\overline{ \mathcal D \Phi}$.

\item(iii)
The composition ${ \mathcal D \Phi}= t_2 \circ \overline{ \mathcal D \Phi}$, mapping
$$[\Gamma\sigma\Gamma] \to \chi_K(t({\Gamma\sigma\Gamma}) \otimes (t({\Gamma\sigma\Gamma}))^{\rm op})\chi_K,$$ extends to a   $\ast$-algebra representation of the Hecke algebra $\H_{\rm red}(\Gamma, G)$
into the reduced crossed product $C^\ast$-algebra  associated with $\mathcal D\mathcal R(\overline G)$.

\item(iv)
%
Let $\alpha$ be as in Definition \ref{doubleroe}, and let  $\Psi$ be the Hecke algebra introduced in
formula (\ref{oldpsi}).
Then
\begin{equation}\label{mformula}
\Psi= \alpha \circ \mathcal D \Phi= \alpha \circ t_2 \circ \overline{ \mathcal D \Phi}.
\end{equation}

\end{thm}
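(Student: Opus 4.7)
The plan is to prove (i)--(iv) in sequence, with each step building on the preceding one; the core observation is that both constructions, $\overline{\D\Phi}$ and $t_2$, factor leg-by-leg over the tensor product in $\D\cR(\overline G)$, so that each verification reduces to the corresponding one-sided statement in Lemma \ref{rephecke}.

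For part (i), I would verify multiplicativity and $\ast$-compatibility of the assignment $[K\sigma K]\mapsto \chi_K\bigl(L(\chi_{K\sigma K})\otimes L(\chi_{K\sigma K})^{\mathrm{op}}\bigr)\chi_K$ directly. Multiplicativity reduces, after using $\chi_K\cdot\chi_K=\chi_K$ on the inner junction, to the identity $\chi_{K\sigma_1 K}\ast\chi_K\ast\chi_{K\sigma_2 K}=\chi_{K\sigma_1 K}\ast\chi_{K\sigma_2 K}$ on each tensor leg, which is precisely the statement that $\chi_K$ is the unit of the Hecke subalgebra $\H_0(K,\overline G)\subset C^\ast(\overline G)$; the opposite algebra structure on the second leg is compatible because the Hecke product is defined symmetrically. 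The $\ast$-identity follows from $L(\chi_{K\sigma K})^\ast=L(\chi_{K\sigma^{-1}K})$. Trace preservation compares the canonical Haar state on the compression $\chi_K\D\cR(\overline G)\chi_K$ with the Hecke-algebra trace $\langle\cdot[\Gamma],[\Gamma]\rangle$; both evaluate $[K\sigma K]$ to $[\Gamma:\Gamma_\sigma]$ when $\sigma\in K$ and to zero otherwise.

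For part (ii), I would define $t_2$ as the diagonal application of $t$ to an operator system $\mathcal O\subset\D\cR(\overline G)$ generated by products of the form $\chi_K(L(\chi_{\sigma_1 K})\otimes L(\chi_{\tau_1 K})^{\mathrm{op}})(L(\chi_{K\sigma_2})\otimes L(\chi_{K\tau_2})^{\mathrm{op}})\chi_K$: apply $t$ on the first leg, $t^{\mathrm{op}}$ on the second, and then compress by $\chi_K$. The $\ast$-representation property on $\mathcal O$ factors through Lemma \ref{rephecke}(i) applied separately on each tensor leg. The inclusion $\overline{\D\Phi}(\H_0(K,\overline G))\subseteq\mathcal O$ follows by writing $\chi_{K\sigma K}$ as a finite sum of products $\chi_{\sigma K}\ast\chi_{K\tau}$ indexed by coset representatives, placing the image inside the pairing (\ref{pairingos}) applied diagonally. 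Part (iii) is then formal: the composition $\D\Phi=t_2\circ\overline{\D\Phi}$ is multiplicative and $\ast$-preserving by (i) combined with the leg-wise $\ast$-representation property from (ii), trace preservation of $\D\Phi$ follows from the leg-wise trace preservation of $t$ (a consequence of Lemma \ref{rephecke}(iv)), and the extension to $\H_{\mathrm{red}}(\Gamma,G)$ follows by the standard GNS argument for trace-preserving $\ast$-representations.

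For part (iv), the verification is a direct unwinding: on a double coset $[\Gamma\sigma\Gamma]$, the element $\D\Phi([\Gamma\sigma\Gamma])=\chi_K(t(\Gamma\sigma\Gamma)\otimes t(\Gamma\sigma\Gamma)^{\mathrm{op}})\chi_K$ is acted on by $\alpha$, where the $G\times G^{\mathrm{op}}$ groupoid action on $\Gamma$ (by left and right multiplication) realizes the bimodule action $X\mapsto t(\Gamma\sigma\Gamma)\,X\,t(\Gamma\sigma\Gamma)^\ast$ on $\L(\Gamma)\subset\ell^2(\Gamma)$, while compression by $\chi_K$ implements the conditional expectation $E^{\L(G)}_{\L(\Gamma)}$; the result is exactly formula (\ref{oldpsi}). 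The main obstacle will be part (ii): precisely choosing the operator system $\mathcal O$ so that it simultaneously contains the image of $\overline{\D\Phi}$ and is small enough that $t_2$ can be verified to be a $\ast$-representation one leg at a time. The noncommutativity between the multiplication operator $M_{\chi_K}$ and the convolution operators in $\D\cR(\overline G)$, combined with the interaction between the $L^\infty(\overline G,\mu)$-bimodule action and the opposite-algebra structure on the second tensor factor, requires careful bookkeeping to confirm that compression by $\chi_K$ converts the tensor product of operator-system morphisms into an honest $\ast$-representation of the compressed algebra.
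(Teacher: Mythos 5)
Your overall architecture matches the paper's: part (i) by a direct computation with the compressed element, part (ii) by extending $t$ diagonally to an operator system $\mathcal O$, part (iii) by composition, part (iv) by unwinding $\alpha$ against formula (\ref{oldpsi}); and your part (iv) is essentially the intended argument. The problem is the core reduction in part (i), which then infects (ii) and (iii).

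The inner $\chi_K$ in $\chi_K(L(\chi_{K\sigma K})\otimes L(\chi_{K\sigma K})^{\rm op})\chi_K$ is the multiplication operator by the characteristic function of $K$, i.e.\ a projection in the copy of $L^\infty(\overline G,\mu)$ inside the crossed product; it is not the convolution idempotent $L(\chi_K)$, it does not commute with the convolution legs, and it cannot be absorbed as ``the unit of the Hecke subalgebra'' separately on each tensor factor. Worse, if the two legs really did decouple as your reduction asserts, the map would not be multiplicative at all: writing $[K\sigma_1K][K\sigma_2K]=\sum_\theta c_\theta[K\theta K]$ in $\H_0(K,\overline G)$, the decoupled product $L(\chi_{K\sigma_1K}\ast\chi_{K\sigma_2K})\otimes L(\chi_{K\sigma_1K}\ast\chi_{K\sigma_2K})^{\rm op}$ contains off-diagonal cross terms $c_\theta c_{\theta'}\,L(\chi_{K\theta K})\otimes L(\chi_{K\theta'K})^{\rm op}$ with $\theta\neq\theta'$, and diagonal terms with coefficient $c_\theta^2$ rather than $c_\theta$. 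The entire point of compressing by the multiplication operator $\chi_K$ is that the covariance relations of the crossed product couple the two legs and remove exactly these defects. The paper's proof does this via the explicit decomposition $K\sigma K=\bigcup_i K\sigma s_i=\bigcup_a r_a\sigma K$, rewriting the compressed element as a double sum over single cosets carrying intermediate projections $\chi_{s_i\sigma^{-1}K\sigma s_j^{-1}\cap K}$ that depend jointly on the indices of \emph{both} legs (formula (\ref{chain111})); multiplicativity is then checked term by term, and the only convolution products that ever arise are of the form $L(\chi_{\sigma_1 K})L(\chi_{K\sigma_2})=L(\chi_{\sigma_1K\sigma_2})$. That last observation is also what makes (ii) and (iii) work: it is precisely why all products stay inside the operator system $\O(K,\overline G)$ leg by leg, so that Lemma \ref{rephecke} applies. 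Your closing paragraph concedes that the interaction between $M_{\chi_K}$ and the convolution operators ``requires careful bookkeeping,'' but that bookkeeping is the proof; as written, your leg-by-leg reduction in (i) asserts the outcome of that bookkeeping without performing it, and asserts it in a form that is false before compression.
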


  {\bf Acknowledgement} We are deeply indebted to R. Grigorchuk, A. Lubotzky and L. P\u aunescu for encouraging us into writing this paper. We are deeply indebted to U. Haagerup, P. De La Harpe,
P. Kutzko, H. Moscovici,  R. Nest, T. Steger, Paul Garett, Vicentiu Pa\c sol, Alexandru Popa, and  the user plusepsilon.de on mathoverflow  for several comments on questions related to this paper. We are deeply indebted to
R. Grigorchuk and Tatiana Smirnova-Nagnibeda for discussions around the topic of this paper, and in particular on central characters of group. We are deeply indebted to J. Petersson and A. Thom for explaining the signification of the  results \cite{PT}.   We are deeply indebted to N. Ozawa for providing to the author for publication, his edited   notes (\cite{Ra2}), on the author's paper \cite{Ra}. We are deeply indebted to F. Boca for  comments on this paper, and for helping the author improve the style of the presentation. The author is thanking the Department of Mathematics in Copenhagen, for the warm welcome during the last stage of this work.

\section{Outline of the paper}

We outline  the construction of Hilbert spaces of "virtual" $\Gamma$-invariant vectors and of the unitary action  of $G$ on the inductive limit of these spaces.
We recall from introduction that $\Gamma$ is an almost normal subgroup
of $G$: that is, for all $\sigma$ in $G$, the group $\Gamma_{\sigma} = \sigma\Gamma\sigma^{-1}\cap\Gamma$ has finite index in $\Gamma$.
We assume in this paper that, for all $\sigma \in G$, the values of the  group indices $[\Gamma: \Gamma_{\sigma}]$ and $[\Gamma: \Gamma_{\sigma^{-1}}]$ are equal. This hypothesis is automatic if $\Gamma$ is a group with infinite non-trivial conjugacy classes, in the presence of the representation $\pi_0$ as in Definition \ref{TC} (e.g. by Jones's index theory \cite{Jo}).
 We also assume that the family $\S$, generated through the intersection operation by the subgroups $\Gamma_{\sigma}, \sigma \in G$, separates points of $\Gamma$. Let $K$ be the profinite completion of $\Gamma$, with respect to the family $\S$.

We first consider a unitary representation $\pi$ of $G$ having properties (i), (ii) from Definition \ref{TC}. In this case $\pi|_\Gamma$ is an   integer multiple of the regular representation.
To construct $\Gamma$-invariant vectors, we assume that $H$ is contained in a larger vector space $\V$, and that the representation $\pi$ extends to a representation $\pi_{\V}$ of $G$ into the linear isomorphism group of $\V$. We assume that the space  $\V^{\Gamma}$, consisting of vectors in $\V$ fixed by $\pi_{\V}(\Gamma )$, is non-trivial. To construct the representation $\overline{\pi}^{\rm p}$, one
 considers simultaneously the spaces $\V^{\Gamma_0}$ of  vectors in $\V$ fixed  by the action of $\pi_{\V}(\Gamma_0 )$, where $\Gamma_0$ is any group conjugated in $G$ to a subgroup in $\S$.
Note that $\pi_\V(\sigma)$, for $\sigma$ in $G$, will  move the vector space  $\V^{\Gamma_0}$ into $\V^{\sigma\Gamma_0\sigma^{-1}}$.

We  construct  a Hilbert space structure on the spaces $\V^{\Gamma_0}$, which determines  Hilbert spaces $H^{\Gamma_0}$ of $\Gamma_0$-fixed vectors, $\Gamma_0\in\S$. On the  scalar product  on the spaces of invariant vectors, we impose the condition  that   the  inclusions
$$
H^{\Gamma_0} \subseteq H^{\Gamma_1}, \quad
\Gamma_0, \Gamma_1 \in \S,\quad  \Gamma_1 \subseteq \Gamma_0,$$
\noindent are isometric, and   that $\pi_{\V}(\sigma)$  maps $H^{\Gamma_0\cap \Gamma_{\sigma^{-1}}}$ isometrically onto $H^{\sigma\Gamma_0\sigma^{-1}\cap\Gamma_\sigma}$, for $\sigma \in G$.

Using this procedure, we obtain in formula (\ref{scalar}) a prehilbertian space structure on the reunion $\mathop{\bigvee}\limits_{\Gamma_0 \in \; \S} H^{\Gamma_0}$. Let $\overline{H}^{\rm p}$ be the Hilbert space completion of the reunion.
Clearly $\pi_{\V}$ induces a representation of $G$ into the isomorphisms of $\mathop{\bigvee}\limits_{\Gamma_0 \in \; \S} \V^{\Gamma_0}$.

The conditions we are imposing on the scalar products on the Hilbert  spaces $H^{\Gamma_0}$, $\Gamma_0\in S$ imply that $\pi_{\V}$
induces an unitary representation $\overline{\pi}^{\rm p}$ of $G$ into the unitary group $\U(\overline{H}^{\rm p})$ of the Hilbert space $\overline{H}^{\rm p}$.
The notation $\overline{\ \cdot\ }^{\rm p}$  over $\pi$ and $H$,  stands for  the above  completion operation, consisting into passing from $\pi$ to the unitary representation $\overline\pi^{\rm p}$ on Hilbert space completion of the reunion of the space of $\Gamma_0$ invariant vectors, $\Gamma_0\in \S$.

 We recall that the Schlichting completion    $\overline{G}$ (see e.g. \cite{Sch}, \cite{Tz}, \cite{KLM}, \cite{LLN}) of the discrete group $G$ with respect to the subgroups
 in $\S$  is the locally compact, totally disconnected group obtained as the disjoint union of the double cosets $K\sigma K$, with the obvious multiplication relation, where $\Gamma\sigma\Gamma$ runs over a set of representatives for double cosets for $\Gamma$ in $G$ (see also \cite{Bi}, \cite{BC}, \cite{Hal}).
 
  Let   dimension $d_{\pi}={\rm dim}_{\{\pi(\Gamma)\}''}H_0\in \mathbb N $ be the von Neumann dimension of $H$ as a module over the type II factor $\{\pi(\Gamma)\}''$  (see \cite{GHJ}), section 3.3 for definitions and  notations). We prove (Lemma \ref{compact}) that  the left regular representation $\lambda_K$ of $K$ into the unitary group $\U(L^2(K, \mu))$ has multiplicity $d_{\pi}$   in $\overline{\pi}^{\rm p}|_{K}$.

Let $\mu$ be the normalized Haar measure on $K$ and extend it on the locally compact group $\overline{G}$ to the Haar measure $\mu$, normalized by $\mu(K) = 1$.
The assumption that $[\Gamma : \Gamma_{\sigma}]=[\Gamma : \Gamma_{\sigma^{-1}}]$ for all $\sigma$ in $G$, implies that the measure $\mu$ is bivariant under left and right translations by elements in the group $\overline{G}$.

We extend the construction of the above unitary representation to the case where $\pi_0$ is a unitary representation as in Definition \ref{TC}. In this case the unitary representation  $\pi_0|_\Gamma$ is no longer a multiple of the left regular representation of $\Gamma$, but it is a subrepresentation of a larger representation $\pi$ with the above properties.

We prove that the above construction that we performed to obtain the unitary representation $\overline\pi^{\rm p}$ may be repeated for $\pi_0$.
We obtain a unitary representation $\overline\pi_0^{\rm p}$ of $\overline G$ that is associated with the action of the initial group $G$ on the spaces of $\Gamma_0$-invariant vectors, $\Gamma_0\in \S$.
The representation $\overline{\pi}_0^{\rm p}$ naturally extends to the $C^\ast$-algebra of the locally compact group
$\overline{G}$.
It also  extends   to a $C^{\ast}$-algebra representation of the full $C^{\ast}$-algebra
associated simultaneously with the groups $G$ and $\overline G$
(see Definition \ref{two}).


 In Theorem \ref{a0formula}, we note that "block matrix coefficients" of the representation $\overline{\pi}_0^{\rm p}$ are   Hecke operators associated to the representation $\overline{\pi}_0^{\rm p}$, corresponding to level $\Gamma_0$, where the group $\Gamma_0\S$ is determined by the size of the "block". We use this in Theorem \ref{L0} to determine the explicit formula  (see formula (\ref {heckematrix0})) of the Hecke operators in terms of the data from the representation $\pi_0$.

 One outcome of this paper is  the relation between the
representations $\pi_0$ and $\overline{\pi}_0^{\rm p}$.
The representation $\overline{\pi}_0^{\rm p}$ is a type I representation of the group $\overline{G}$. Hence, it has an associated character (\cite{HC}, \cite{Sal}, \cite{GeGr}),  which we denote by $\theta_{\overline{\pi}_0^{\rm p}}$= "${\rm Tr}$"$\; \overline{\pi}_0^{\rm p}$.
In Corollary \ref{plancherel}, we prove that the values of the character
$\theta_{\overline{\pi}_0^{\rm p}}(g)$ computed at $g\in G$, are determined by  summing   over cosets  the values of  a positive definite function $\phi$  on $G$, introduced in formula (\ref{pdf}). The same positive definite function  also  determines the formula of  "${\rm Tr}\; \pi_0(g)$" of the original representation $\pi_0$.


The basic   model of unitary representations  as above  is the representation of the spaces of $\Gamma_0$-invariant vectors, in the case of  the left regular representation $\lambda_G$ of $G$ into $\U(l^2(G))$ (a more detailed exposition  Example \ref {regular} in Section \ref{theexamples}). In this case the Hilbert spaces $H^{\Gamma_0}$, $\Gamma_0 \in \S$, are the Hilbert spaces $l^2(\Gamma_0 \setminus G)$ with scalar product normalized, so that the inclusions $l^2(\Gamma_0 \setminus G) \subseteq l^2(\Gamma_1 \setminus G)$ are isometric for $\Gamma_1 \subseteq \Gamma_0$. The Hilbert space $\overline{H}^{\rm p}$ is simply $L^2(\overline{G}, \mu)$, and in this particular case the representation $\overline{\pi}^{\rm p}$ is simply the left regular representation of $\overline{G}$ into the unitary group $\U(L^2(\overline{G}, \mu))$.

We recall from the introduction that the Hecke algebra $\H_0(\Gamma, G)$
of double cosets of $\Gamma$ in $G$ has a canonical $\ast$-algebra embedding into $B(\ell^2(\Gamma\backslash G))$. The closure in the uniform norm is a $C^\ast$-algebra $\H_{\rm red}(\Gamma, G)$ called  the reduced $C^\ast$-Hecke algebra, by analogy with reduced $C^\ast$-algebra of a discrete group (\cite{BC}, \cite{Ha},\cite{Tz}, \cite{BCH}, \cite{Cu}). The  representation  $$\H_{\rm red}(\Gamma, G)\subseteq B(\ell^2(\Gamma\backslash G))$$
\noindent is called the left regular representation of the Hecke algebra. The commutant is generated by the right quasi-regular representation $\rho_{\Gamma\backslash G}$ of $G$
into the unitary group of  $\ell^2(\Gamma\backslash G)$ (see e.g. \cite{BC}).

The content of the Ramanujan-Petersson Problem is the determination of bounds on the growth of matrix coefficients and eigenvalues for representations of the Hecke algebra, on Hilbert spaces of $\Gamma$-invariant vectors associated to unitary representations $\pi$ of  $G$ as above.  The Ramanujan-Petersson Conjecture is asking in fact (see \cite{Ra}) when the matrix coefficients of the representation of  $\H_0(\Gamma,G)$, associated to the unitary representation $\pi$, are weakly limits of convex combinations of matrix coefficients of the Hecke algebra coming from  left regular representation of the Hecke algebra.

 In the terminology of this paper this is equivalent to determining when the  spherical functions (matrix coefficients
  corresponding to vectors fixed by $K$) associated to the representation $\overline {\pi_0}^{\rm p}$ of $\overline G$ are weakly contained in the left regular representation of the Hecke algebra. Clearly, this is equivalent to the weak containment of the representation $\overline {\pi_0}^{\rm p}$  in the left regular representation of $\overline G$, acting  by left translations on $L^2(\overline G,\mu)$. The Ramanujan-Petersson Conjecture has been proven to hold true for automorphic forms by Deligne (\cite{De}). For Maass forms (\cite{Ma}), the general problem is open (see   \cite{Sh}, \cite{Sar1},   \cite{Hej},\cite{BLS},  \cite{Ra}).
We  formulate the following problem:

 {\bf Problem.} {\it
[Generalized Ramanujan-Petersson problem]:  Determine  conditions on the representation $\pi_0$ such that   $\bar{\pi_0}^{\rm p}$ is weakly contained in the left regular representation $\lambda_{\overline{G}}$ of $\overline{G}$ on
$L^2(\overline{G}, \mu)$. It is enough to  determine conditions on the unitary representation $\pi$ of $G$, so that the unitary representation   $\bar{\pi_0}^{\rm p}|_G$  of $G$ is weakly contained in the  unitary representation $\lambda_{\overline{G}}|_G$.}

In the case of the unitary representation $\pi_n$ of $G=\PGL(2,\Z[\frac{1}{p}])$, $p$  a prime number, $n\in \mathbb N$, obtained by restriction from the discrete series of representations of $\PSL(2,\R)$, the representations $\overline{\pi}_n^{\rm p}$ encode all the group harmonic analysis information about the spaces of automorphic forms
(here the group $\Gamma$ is $\PSL(2,\Z)$). The spherical matrix coefficients of $\overline{\pi}_n^{\rm p}$ encode the information about the eigenvalues of Hecke operators.
\vskip20pt

Below we present examples of representations $\pi$ and of the associated representation $\overline{\pi}^{\rm p}$. For a more detailed exposition see Section \ref{theexamples}. The easiest case is when a unitary representation $\pi$ of $G$ as above has also the property that
$\pi|_\Gamma$ is an integer multiple of the left regular representation of $\Gamma$. In the terminology of Murray-von Neumann dimension (see \cite{GHJ}, section 3.3), this is the case when ${\rm dim}_{\{\pi(\Gamma)\}''}H$ is an integer.  In this case, as  explained above, there exists a Hilbert subspace $L$ of $H$ with the property introduced next. 
\begin{defn}\label{wandering}
Let $\pi$ be a unitary representation of $G$ as above. We consider a subspace $L$ of $H$  such   that $\pi(\gamma)L$ is orthogonal to $L$ for $\gamma \neq e$. We will call such a space $\Gamma$-wandering subspace for $\pi$. If in addition we have that  $H = \mathop{\bigvee}\limits_{\gamma \in \Gamma}\pi(\gamma)L$, we will refer to   a subspace, with the two properties, as to  a $\Gamma$-wandering, generating subspace.
\end{defn}

\

In the above context, the Hilbert spaces of $\Gamma_0$-invariant vectors are canonically identified with   the Hilbert spaces $L\otimes l^2(\Gamma_0 \setminus G)$, $\Gamma_0\in \S$. Recall that $K$ is the profinite completion of $\Gamma$ with respect to the subgroups in $\S$. The space $\overline H^{\rm p}$ is identified to $L\otimes L^2(K,\mu)$. The problem consists in   the identification of the  representation  of $G$ and $\overline G$ on $\overline H^{\rm p}$.

One case (see Example \ref {koop}) in which the above situation occurs is when $(\cX, \nu)$ is an infinite measure space on which $G$ acts by measure preserving transformations. In this case $H=L^2(\cX, \nu)$ and $\pi$ is the Koopmann representation $\pi_{\rm Koop}$ (see e.g. \cite{Ke})
\begin{equation}\label{defkoop}
(\pi_{\rm Koop}(g)f)(x) = f(g^{-1}x), \quad x \in \cX, \; g \in G, \; f \in L^2(\cX, \nu).
\end{equation}

We assume that the  restriction of the action of $G$ to $\Gamma$ admits
 a fundamental domain  $F$. Then we may take $L=L^2(F,\nu|_{F})$. This is a $\Gamma$-wandering, generating subspace, associated with the representation  $\pi_{\rm Koop}$.
In this case ${\rm dim}_{\{\pi_{\rm Koop}(\Gamma)\}''}H=\infty$.
  Then $\overline H^{\rm p}= L^2(F, \nu |_{F})\otimes L^2(K,\mu)$. The representation $\overline \pi_{\rm Koop}^{\rm p}$ is determined by  the $\Gamma$-valued cocycle on $G\times F$, determined by the action of $G$ on $\cX$, in the identification $\cX\cong F\times \Gamma$. To obtain the representation $\overline \pi_{\rm Koop}^{\rm p}$, one views this cocycle as having values in $ K$.

Assume that $\pi_0$ is a representation of $G$ with the properties listed in Definition \ref{TC}. This is typically the case
 when
 ${\rm dim}_{\{\pi_0(\Gamma)\}''}H$ is not an integer.
 Such a situation occurs when the $\Gamma$-invariant  vectors are the automorphic forms.

 In this case $G=\PGL(2,\mathbb Z[\frac{1}{p}])$, $p$ a prime, $\Gamma$ is the modular group, and $H_n = H^2(\bH, \nu_n)$. Here  $\nu_n$ is the measure $\nu_n = ({\rm Im}z)^{n-2}d\bar{z}dz$ on the upper half plane $\bH$.
The representations $\pi_0=\pi_n$ are obtained by restricting to $G$ the representations in the discrete series $(\pi_n)_{n \in \N, n\geq 2}$ of unitary representations of $\PSL_{2}(\R)$ (see e.g. \cite {la}). In this case there is no canonical wandering subspace $L$, since $${\rm dim}_{\{\pi_n(\Gamma)\}''}H_n= {\rm dim}_{\Gamma}H_n = \frac{n-1}{12},$$ as proved in \cite{GHJ}, Section 3.3.d. The reason for the previous  non-existence statement is the fact  that, if such a space $L$ exists, then $${\rm dim}_\C L=  {\rm dim}_{\Gamma}H_n.$$
This is impossible if $\frac{n-1}{12}$ is not an integer.

In the theory of automorphic forms, the construction of Hilbert spaces of $\Gamma$-invariant vectors is  solved  by using a fundamental domain and the Petersson scalar product (\cite{Pe}), which consists in integration over the fundamental domain. In the framework  of this paper, we substitute the integration over a fundamental domain by the action of the projection $P_L$ onto the space $L$.

  One   assumes that there exists  a    unitary representation $\wideparen{\pi}_n$  on a larger Hilbert space, containing $\pi_n$ as a subrepresentation as in Definition \ref{TC}. In this  case (see Example \ref{auto}), the larger space Hilbert space is   $H=L^2(\bH, \nu_n)$. The unitary representation  $\wideparen{\pi}_n$ acts  on functions on $\mathbb H$ by the same formula as $\pi_n$. The invariance properties of the measure $\nu_n$ imply, as in the case of $\pi_n$, that $\wideparen{\pi}_n$ is a unitary representation of $G$.

  We use the notations from Definition \ref{TC}.
Let $P_0$ be the orthogonal  projection from $L^2(\bH, \nu_n)$  onto the space $H_n$ of square summable analytic functions. We have that $[P_0, \wideparen{\pi}_n(g)] = 0$ for all $g \in G$.  Hence $\pi_n(g) = P_0 \wideparen{\pi}_n(g)P_0$, $g \in G$.
We take the space $L=L^2(F, \nu_n)$ as a canonical choice for the $\Gamma$-wandering, generating subspace $L$ for $\wideparen{\pi}_n$. Recall that $P_L$ is the orthogonal projection onto $L$. It is obvious that in this case $P_L$ is precisely $M_{\chi_F}$, the operator of multiplication with the characteristic function  of the fundamental domain $F$, acting  on $L^2(\bH, \nu_n)$.

The computations in Section 3.3 of \cite{GHJ} imply  that the  product   $P_0M_{\chi_F}$ is a trace class operator, with trace equal to the
 Murray-von Neumann dimension ${\rm dim}_{\{\pi_n(\Gamma)\}''}H_n$.

To abstractly define the space of $\Gamma$-invariant vectors,
we make use of the relative position (the operator angle) of the projections $P_0$ and $M_{\chi_F}$.
In this situation the technical hypothesis (Definition \ref{TC}) is the convergence in the space of Hilbert Schmidt class operators of the series
$$
\mathop{\sum}\limits_{\theta \in \Gamma\sigma\Gamma}P_L \pi_n(\theta)P_L, \quad {\rm for} \; \sigma \in G.
$$

In the present example, this condition holds true since the reproducing kernel for the   projection onto the space of automorphic forms, and the reproducing kernels of the associated Hecke operators,  are the sum of the operator kernels (Berezin's reproducing  kernels, \cite{Be}, \cite{Ra3}) of the operators
\begin{equation}\label{suma}
\mathop{\sum}\limits_{\theta \in \Gamma\sigma\Gamma} M_{\chi_F} \pi_n(\theta) M_{\chi_F},\quad   \sigma \in G.
 \end{equation}
Moreover the sum of the traces of the corresponding operators is also absolutely convergent. This follows from the computations in  \cite{Za} and \cite{GHJ}, Section 3.3.

We prove in Theorem   \ref {a0formula} that  the sum  in  formula (\ref{suma}) is a projection, when taking the  sum  over $\Gamma$  (e. g. $\sigma$ is the identity element). We prove that the range of this projection (which is a subspace of $L$) is unitarily equivalent to the Hilbert space of $\Gamma$-invariant vectors. Moreover, the same unitary equivalence will transform the Hecke operator corresponding to a double coset $[\Gamma\sigma\Gamma]$  into  the sum in  formula (\ref{suma}).

The advantage of this point of view on spaces of automorphic forms is that  formula (\ref{suma}) allows a direct computation of the traces of the Hecke operators at any level $\Gamma_0\in \S$. This is used to compute (Theorem \ref{deligne})  values of the character
  $\theta_{\overline{\pi}_n^{\rm p}}(g), g \in G$. These values   are the   partial sums of  traces of  operators as  in   formula (\ref{suma}) (see Theorem \ref{a0formula} 
  and Remark \ref{pin}).

The  construction  of  the spaces of $\Gamma$-invariant vectors for the unitary representation $\overline {\pi_{\rm Koop}}^{\rm p}$ may be obtained alternatively if the above representation admits a "square root" as described below (see Example \ref {ad} in Section \ref{theexamples}).   Let  the infinite measure space  be $\bH$, endowed with the $\PSL(2,\R)$ invariant measure $ \nu_0=({\rm Im}z)^{-2}d\bar{z}dz$.
 Let $\pi = \pi_{\rm Koop}$ be the corresponding Koopmann unitary representation of $\PSL(2,\R)$ into the unitary group of $L^2(\bH, \nu_0)$.
We denote by  $\pi_n^{\rm op}$ the conjugate representation of $\pi_n$.
Because of Berezin's quantization method (see \cite{Re}, \cite{Be}), the representation $\pi_{\rm Koop}$ factorizes as $$
\pi_{\rm Koop}\cong\pi_n \otimes \pi_n^{\rm op}.
$$


As in the previous example, we let $G=\PGL(2,\mathbb Z[\frac{1}{p}])$, $p$ a prime, and let $\Gamma$ be the modular group.
The  factorization of the representation $\pi_{\rm Koop}$ gives a canonical choice for the Hilbert spaces of $\Gamma_0$-invariant vectors, $\Gamma_0 \in \S$.
Indeed, the representation $\pi_n \otimes \pi_n^{\rm op}$ is unitarily equivalent to the adjoint representation ${\rm Ad\, } \pi_n(g)$ into the unitary group of the Hilbert -Schmidt operators $\cC_2(H_n) \cong H_n \otimes \overline{H}_n^{\rm p}$.

The larger vector space containing $\cC_2(H_n)$ is $\V = B(H_n)$, the space of bounded linear operators on $H_n$.  Then the adjoint representation ${\rm Ad}\, \pi_n(g)$ extends to a representation into the inner automorphism group of $B(H_n)$.  The  space $\V^{\Gamma_0}$ of $\Gamma_0$ invariant vectors    is in this  situation the type II$_1$ factor:
$$
\A_n(\Gamma_0) = \{ \pi_n(\Gamma_0) \}'= \{ X \in B(H_n) | [X, \pi_n(\gamma)] = 0, \gamma \in \Gamma_0 \}.
$$
The fact that the commutant algebra $\A_n(\Gamma_0)$ is a type II$_1$ factor is a consequence of the fact that ${\rm dim}_{\Gamma}H_n$ is finite (see \cite{GHJ}, Section 3.3.d).

Then the Hilbert space $H^{\Gamma_0}$ is simply $L^2(\A_n(\Gamma_0), \tau)$,  the GNS Hilbert space associated to the unique trace $\tau$ on $\A_n(\Gamma_0)$. The family $\{ \A_n(\Gamma_0) \}_{\Gamma_0 \in \; \S}$ is a directed  family of II$_1$ factors. Let $\A_n^\infty$ be the type II$_1$ factor obtained as the inductive limit of the above  directed family of II$_1$ factors. We also denote by $\tau$ the unique trace on $\A_n^\infty$.

 Then the space $\overline{H}^{\rm p}$ is $L^2(\A_n^\infty, \tau)$ and $\overline{{\rm Ad \,} \pi_n}^{\rm p}$ is the extension of ${\rm Ad\, }\pi_n(g)$.   In Theorem \ref{double} (Theorem 3.2 in  \cite{Ra1}) we prove that the $K$-spherical matrix coefficients for $\overline{{\rm Ad\, } \pi_n}^{\rm p}$ are explicitly  computed from  a $C^{\ast}$-representation determined by the $K$-spherical matrix coefficients for $\overline{\pi}_n^{\rm p}$. This representation is in fact the main algebraic tool in \cite{Ra}.

Let again $\pi_0$ be a representation of $G$ as in Definition \ref{TC}.
In all constructions above, the main building block for the representations $\overline{\pi_0}^{\rm p}$ is a completely positive map
 $\Phi$ (see Theorem \ref{cp}) supported on $C^{\ast}(G)$ with values in $B(L)$, and extending to $C^{\ast}(\overline{G})$.
The map $\Phi$ encodes the sums from formula (\ref{suma}). We extend $\Phi$  to $C^{\ast}(\overline{G})$, by defining, for the characteristic function of a closed subset $C$ of $\overline{G}$,
$$
\Phi(\chi_C) = \mathop{\sum}\limits_{\theta \in C}P_L\pi(\theta)P_L.
$$

Then $\Phi $ is a completely positive map on $C^\ast(\overline{G})$ with values in $B(L)$, and $\Phi $ is $\ast$-preserving, multiplicative representation of the operator system (Definition \ref{canonicalos})
$$\O(K, G)=\big[\C(\chi_{\sigma K} | \sigma \in G)\big]\cdot \big[\C(\chi_{\sigma K} | \sigma \in G)\big]^{\ast}\subseteq C^\ast(\overline {G}).$$ The  $\ast$-preserving, multiplicative property     means that for any two $K$-cosets  $K\sigma_1$, $K\sigma_2$  in $\overline{G}$,  we have
$$
\Phi(\chi_{K\sigma_1})^{\ast}\Phi(\chi_{K\sigma_2}) = \Phi(\chi_{\sigma_1K})\Phi(\chi_{K\sigma_2}) = \Phi(\chi_{\sigma_1 K \sigma_2}).
$$

We prove in Lemma \ref{matrix1}  that the representation $\overline{\pi_0}^{\rm p}$ is entirely reconstructible from the completely positive map $\Phi$.

Then $\Phi$ is  an "operator valued eigenvector" for the Hecke algebra.
Indeed, by the multiplicativity property,   denoting the convolution operation on functions on $\overline G$ by
$\cdot\ $, we obtain that:

\begin{equation}\label{eigenvector111}
\Phi(\chi_{K\sigma_1K})\Phi(\chi_{K\sigma_2}) = \Phi(\chi_{K\sigma_1K}{\cdot}\chi_{K\sigma_2}),\quad  \sigma_1, \sigma_2 \in G.
\end{equation}

\section{Axioms for constructing the Hilbert spaces\\ of $\Gamma$-invariant vectors}\label{axioms}

Let  $\Gamma \subseteq G$ be an almost normal subgroup  as in the introduction, satisfying the assumption
$[\Gamma : \Gamma_\sigma]=[\Gamma : \Gamma_{\sigma^{-1}}]$ for all $\sigma$ in $G$.
Let $\pi$ be a (projective) unitary representation  of $G$ into the unitary group $\U(H)$ of a Hilbert space $H$, with the properties
(i) and (ii) from Definition \ref{TC}.
In particular, we have that ${\rm dim}_{\{\pi(\Gamma)\}''}H$ is an integer or $\infty$. As observed in the previous section , this implies the existence of a $\Gamma$-wandering, generating subspace $L$ for $\pi|_\Gamma$ (Definition \ref{wandering}).
We recall that this means    that $L$ is orthogonal to $\pi(\gamma)L$ for $\gamma \in \Gamma$, $\gamma \neq e$, and  that $H = \overline{\mathop{\bigvee}\limits_{\gamma \in \Gamma}\pi(\gamma)L}$.

We  construct, the Hilbert spaces $H^{\Gamma_0}$ of $\Gamma_0$-invariant vectors, $\Gamma_0 \in \S$.
Such spaces $H^{\Gamma_0}$ will be isometrically isomorphic to $l^2(\Gamma_0 \setminus \Gamma) \otimes L$ for $\Gamma_0 \in \S$. The main problem that we first consider in this section  is to construct the representation of $G$ on the reunion of spaces of $\Gamma_0$-invariant vectors.

 The particular examples presented in the previous section suggest that one possibility to address this problem is to find an embedding of the Hilbert space $H$ into a larger vector space $\V$, such that the representation $\pi$ extends to a representation $\pi_{\V}$ of $G$ into the group of linear isomorphisms of $\V$, and such that $\pi_{\V}$  invariates the subspace $H$ of $\V$. We explain the construction first in this case, then perform the construction based on properties (i) and (ii) as in Definition \ref{TC}.

We will  work with subgroups $\Gamma_0$ that are conjugated in $G$ to subgroups in $\S$. We denote this enlarged class of subgroups of $G$ by $\tilde\S$.

 For $\Gamma_0\in \tilde\S$, we consider the spaces $\V^{\Gamma_0}$ of $\pi_{\V} (\Gamma_0)$-invariant vectors in $\V$. Then $\pi_{\V}$  has an obvious extension to
 \begin{equation}\label{vinfinity}
\V_{\infty}=\mathop{\bigvee}\limits_{\Gamma_0 \in \tilde\S}\V^{\Gamma_0}=\mathop{\bigvee}\limits_{\Gamma_0 \in \S}\V^{\Gamma_0}.
 \end{equation}
The remaining problem is to identify a $\pi_{\V}(G)$-invariant subspace of $\V_{\infty}$, that is endowed with  a $\pi_{\V}(G)$-invariant prehilbertian scalar product. Using this scalar product      we define the  Hilbert spaces $H^{\Gamma_0}\subseteq \V^{\Gamma_0} $, $\Gamma_0 \in \tilde \S$. Then $\pi_{\V}$ induces  a unitary representation of $G$ on $\mathop{\bigvee}\limits_{\Gamma_0 \in \tilde\S} H^{\Gamma_0}$.

The construction in this section is certainly similar to other constructions
in the literature (see e.g \cite{Bo}, \cite{Hal}). However, in Theorem \ref{hecke} we employ this construction to introduce  specific $\ast$-representations of the Hecke algebra, involving expressions as in formula (\ref{suma}). These are generalized in the next section to the case when ${\rm dim}_{\pi(\Gamma)}H$ is not an integer and hence there exist no $\Gamma$-wandering, generating subspace.

In the following definition, we introduce a general formalism that is used to construct the representation $\overline{\pi}^{\rm p}$, starting with an extension of the given representation $\pi$ to a larger vector space that contains  vectors invariant to the action of the subgroups in $\S$.
In practice, as will be done later in this section, we construct directly the Hilbert spaces corresponding to "virtual" $\Gamma_0$-invariant vectors, $\Gamma_0\in \S$.

\vskip10pt

\begin{defn}\label{formalism}[Formalism of $\Gamma$-invariant vectors.] Let  $\Gamma \subseteq G$ be as in the introduction, and consider  a (eventually projective) representation  $\pi$ of $G$   into the unitary group of a Hilbert space $H$. We make the following assumptions:

\item (i) There exists a larger vector space $\V$, containing $H$, and a representation $\pi_{\V}$ of $G$ into the linear isomorphisms of $\V$, such that $\pi_{\V}(g)$ invariates $H$ and $\pi_{\V}(g)|_{H} = \pi(g)$ for all $g\in G$. For $\Gamma_0$ in $\tilde\S$, denote by $\V^{\Gamma_0}$ the subspace of $\V$ consisting of vectors fixed by the action of $\Gamma_0$.
Consider the vector space
$
\V_{\infty}$ introduced in formula (\ref{vinfinity}).
%

\item(ii) There exists a dense $\pi(G)$-invariant subspace $\D_{\V} \subseteq H$,
and there exists  a complex valued bilinear form $\langle \cdot, \cdot \rangle_{\infty}$ on
$$
\V_{\infty} \times (\V_{\infty} \vee \D_{\V})
$$

\noindent with the following properties:

{\rm 1)} The restriction of $\langle \cdot, \cdot \rangle_{\infty}$ to $\V_{\infty} \times \V_{\infty}$ is a positive definite, prehilbertian scalar product.

{\rm 2)} For every $\Gamma_0 \in \tilde\S$, $v \in \V^{\Gamma_0}$, the linear map on $\D_{\V}$, defined by the restriction of the linear form $\langle v, \cdot \rangle_{\infty}$ to $\D_{\V}$, is $\Gamma_0$-invariant.

{\rm 3)} $\langle \cdot, \cdot \rangle_{\infty}$ is $\pi_{\mathcal V}(G)$-invariant:
$$
\langle \pi_{\V}(g)v_1, \pi_{\V}(g)v_2 \rangle_{\infty} = \langle v_1, v_2 \rangle_{\infty},\quad
g\in G,v_1 \in \V_{\infty}, v_2 \in \V_{\infty} \vee \D_{\V}.
$$

If the above assumptions hold true,
we let  $H^{\Gamma_0}$ be the Hilbert space completion of $\V^{\Gamma_0}$ with respect to the scalar product $\langle \cdot, \cdot \rangle_{\infty}$, and let $\overline{H}^{\rm p}$ be the Hilbert space completion of $\V_{\infty}$ with respect to the above scalar product.

\end{defn}

\vskip10pt

The following lemma is an obvious consequence of the assumptions in the definition.

\begin{lemma}\label{propertiespi}
 We assume the context of Definition \ref{formalism}.
Then the restriction of $\pi_{\V}$ to $\V_{\infty}$ extends to a unitary represention $\overline\pi^{\rm p}$ of $G$ into the unitary group of $\overline{H}^{\rm p}$.  Moreover, $\overline\pi^{\rm p}$ maps isometrically
$H^{\Gamma_0}$ onto  $H^{\sigma \Gamma_0\sigma^{-1}}$ for $\Gamma_0 \in \S, \sigma \in G$.

If $\Gamma_1 \subseteq \Gamma_0$, $\Gamma_1, \Gamma_0 \in \tilde \S$, then the inclusion $H^{\Gamma_0}$ into $H^{\Gamma_1}$ is isometrical by construction.  The orthogonal projection from $H^{\Gamma_1}$ onto $H^{\Gamma_0}$ is obtained by averaging over the cosets of $\Gamma_0$ in $\Gamma_1$.

  \end{lemma}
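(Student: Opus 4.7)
The plan is to read off each assertion directly from the axioms of Definition \ref{formalism}, using the $\pi_\V(G)$-invariance of the bilinear form (property (3)) as the one essential analytic input.

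First I would verify that $\pi_\V$ preserves $\V_\infty$. For $v\in\V^{\Gamma_0}$ with $\Gamma_0\in\tilde\S$ and $g\in G$, the identity $\pi_\V(g\gamma_0 g^{-1})\pi_\V(g)v=\pi_\V(g)\pi_\V(\gamma_0)v=\pi_\V(g)v$, valid for every $\gamma_0\in\Gamma_0$, shows that $\pi_\V(g)v\in\V^{g\Gamma_0 g^{-1}}$. Since $\tilde\S$ is closed under conjugation by $G$, this yields $\pi_\V(g)\V_\infty\subseteq\V_\infty$, and applying the same to $g^{-1}$ gives equality. By property (3), the restriction of $\pi_\V(g)$ to $\V_\infty$ is an isometry for $\langle\cdot,\cdot\rangle_\infty$, so it extends uniquely by continuity to a unitary operator $\overline{\pi}^{\rm p}(g)$ on the completion $\overline{H}^{\rm p}$. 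The representation law is inherited from $\pi_\V$, and the inclusion just computed shows at once that $\overline{\pi}^{\rm p}(\sigma)$ sends $H^{\Gamma_0}$ isometrically onto $H^{\sigma\Gamma_0\sigma^{-1}}$. When $\Gamma_1\subseteq\Gamma_0$ are both in $\tilde\S$, one has $\V^{\Gamma_0}\subseteq\V^{\Gamma_1}$ tautologically, and since each $H^{\Gamma_i}$ carries the restriction of the single form $\langle\cdot,\cdot\rangle_\infty$, the induced inclusion $H^{\Gamma_0}\hookrightarrow H^{\Gamma_1}$ is isometric.

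To identify the orthogonal projection $E:H^{\Gamma_1}\to H^{\Gamma_0}$, I would fix coset representatives $\{\gamma_1,\ldots,\gamma_N\}$ of $\Gamma_1$ in $\Gamma_0$, where $N=[\Gamma_0:\Gamma_1]$, and set
\[
Ev=\frac{1}{N}\sum_{i=1}^{N}\pi_\V(\gamma_i)v,\qquad v\in\V^{\Gamma_1}.
\]
The $\Gamma_1$-invariance of $v$ makes $Ev$ independent of the chosen representatives. For any $\gamma\in\Gamma_0$, the elements $\{\gamma\gamma_i\}$ again form a system of coset representatives, so $\pi_\V(\gamma)Ev=Ev$ and $Ev\in\V^{\Gamma_0}$; in particular $E$ restricts to the identity on $\V^{\Gamma_0}$. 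For the orthogonality condition, given $u\in\V^{\Gamma_0}$, property (3) together with $\pi_\V(\gamma_i^{-1})u=u$ yields $\langle \pi_\V(\gamma_i)v,u\rangle_\infty=\langle v,\pi_\V(\gamma_i^{-1})u\rangle_\infty=\langle v,u\rangle_\infty$, whence $\langle Ev,u\rangle_\infty=\langle v,u\rangle_\infty$. This characterizes $E$ as the orthogonal projection on the dense subspace $\V^{\Gamma_1}$ of $H^{\Gamma_1}$, and continuous extension delivers it on the whole Hilbert space. The only real point requiring care is the well-definedness of $E$ on the dense subspace prior to this extension; every other claim reduces to a one-line application of axiom (3), so I do not anticipate any genuine obstacle.
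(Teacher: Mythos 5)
Your proof is correct and is exactly the argument the paper intends: the paper states Lemma \ref{propertiespi} without proof, calling it ``an obvious consequence of the assumptions in the definition,'' and your write-up supplies precisely the missing details (conjugation-invariance of $\V_{\infty}$, axiom (3) for unitarity and for the isometry $H^{\Gamma_0}\to H^{\sigma\Gamma_0\sigma^{-1}}$, and the averaging operator $E$ checked to be idempotent, self-adjoint on the dense subspace, and contractive). No gaps; the approach is the same.
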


If the original representation $\pi$ is projective (see e.g \cite{BN} and the references therein) with cocycle $\varepsilon \in H^2(G,\mathbb{T})$, then assuming that the extension $\pi_{\V}$ has the same cocycle, the above construction still works.

In the sequel we will work with simultaneous representations of the group $G$ and with its Schlichting completion $\overline{G}$ (\cite{Sch}).
Consequently we introduce an universal $C^{\ast}$-algebra containing both $C^{\ast}(G)$ and $C^{\ast}(\overline G)$ as    $C^{\ast}$-subalgebras.

\vskip10pt

\begin{defn}\label{two} With $G, \overline{G}$ as above, let $\A(G, \overline{G})$ be the quotient of the universal crossed product $C^{\ast}$-algebra $C^{\ast}(G \rtimes C^{\ast}(\overline{G}))$, where $G$ acts by conjugation on $C^{\ast}(\overline{G})$ by the norm closed ideal generated by the relations of the form
$$
g\chi_{K_0} = \chi_{gK_0}, \quad g \in G, \quad K_0 = \overline{\Gamma}_0, \quad \Gamma_0 \in \S.
$$

\noindent Here, by $\chi_{gK_0}$ we denote the characteristic function of the coset $$gK_0 = \overline{g\Gamma_0},$$ where the closure operation  is  in $\overline{G}$.

Then $\A(G, \overline{G})$ is the norm closure of the span:
$$
{\rm Sp}\{g\chi_{K_0} | g \in G, \; K_0 = \overline{\Gamma}_0, \; \Gamma_0 \in \S\}.
$$

Assume we are given   a cocycle $\varepsilon \in H^2(G, \mathbb{T})$, that  also extends to $H^2(\overline{G}, \mathbb{T})$. Then, working with crossed products with cocycle, we obtain a similar $C^{\ast}$-algebra, that we denote with $\A_{\varepsilon}(G, \overline{G})$.

\end{defn}

\vskip10pt

Using the  previous  two definitions, we prove that the representation $\overline{\pi}^{\rm p}$ simultaneously extends to $G$ and
$\overline G$.
\vskip10pt

\begin{prop}\label{cala} Given a representation $\pi$ as in Definition \ref{formalism}, the corresponding representation $\overline{\pi}^{\rm p}$ from the above definition extends to a representation, also denoted by $\overline {\pi}^{\rm p}$, of the $C^{\ast}$- algebra $\A_\varepsilon(G, \overline{G})$ into $B(\overline{H}^{\rm p})$.

\end{prop}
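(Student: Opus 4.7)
The plan is to obtain the desired representation in two stages: first extend $\overline{\pi}^{\rm p}$ from the discrete group $G$ to a strongly continuous unitary representation of the Schlichting completion $\overline{G}$, then integrate to a $*$-representation of $C^\ast(\overline{G})$ and verify that it assembles together with the $G$-action into a covariant pair satisfying the defining relations of $\A_\varepsilon(G,\overline{G})$.

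First I would establish that $\overline{\pi}^{\rm p}$ is strongly continuous on $G$ with respect to the topology inherited from $\overline{G}$. A neighborhood basis of $e$ in $\overline{G}$ consists of the compact open subgroups $K_0 = \overline{\Gamma}_0$, $\Gamma_0 \in \S$. By construction of $\overline{H}^{\rm p}$ (Definition \ref{formalism} and Lemma \ref{propertiespi}), the union $\bigvee_{\Gamma_0 \in \S} H^{\Gamma_0}$ is dense in $\overline{H}^{\rm p}$, and $\overline{\pi}^{\rm p}(\gamma)\xi = \xi$ for every $\xi \in H^{\Gamma_0}$ and every $\gamma \in \Gamma_0$. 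A standard approximation argument then yields strong continuity at $e$, and translation invariance extends this to all of $G$. Since $G$ is dense in $\overline{G}$, the unitaries $\overline{\pi}^{\rm p}(g_n)$ form a strong Cauchy net whenever $g_n \to \bar g$ in $\overline{G}$; defining $\overline{\pi}^{\rm p}(\bar g)$ as the resulting strong limit on the dense subspace and extending by isometry gives a strongly continuous unitary representation of $\overline{G}$ on $\overline{H}^{\rm p}$.

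Integration against the Haar measure $\mu$ on $\overline{G}$ promotes this to a $*$-representation of $C^\ast(\overline{G})$ on $\overline{H}^{\rm p}$. Crucially, for each compact open subgroup $K_0 = \overline{\Gamma}_0$ the operator $\mu(K_0)^{-1}\overline{\pi}^{\rm p}(\chi_{K_0})$ is exactly the orthogonal projection onto the subspace of $K_0$-invariant vectors, which by construction equals $H^{\Gamma_0}$. The pair consisting of $\overline{\pi}^{\rm p}|_G$ together with this representation of $C^\ast(\overline{G})$ is automatically covariant: the $G$-action on $C^\ast(\overline{G})$ is by conjugation inside $\overline{G}$, and $G \subseteq \overline{G}$ is represented by the same unitaries in both roles. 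This produces a $*$-representation of the universal crossed product $C^\ast(G\rtimes C^\ast(\overline{G}))$.

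Finally, the defining relations $g\chi_{K_0} = \chi_{gK_0}$ are verified as operator identities by the direct computation
\[
\overline{\pi}^{\rm p}(g)\,\overline{\pi}^{\rm p}(\chi_{K_0}) \;=\; \int_{K_0}\overline{\pi}^{\rm p}(gk)\,d\mu(k) \;=\; \overline{\pi}^{\rm p}(\chi_{gK_0}),
\]
so the representation descends to the quotient $\A_\varepsilon(G,\overline{G})$. For the projective case, the argument goes through \emph{ad litteram} using $\varepsilon$-twisted crossed products, under the hypothesis that the cocycle $\varepsilon$ extends to $H^2(\overline{G},\mathbb{T})$. The main technical point is the strong continuity step, but it reduces to the observation that the construction of $\overline{H}^{\rm p}$ builds in pointwise fixity under each $\Gamma_0 \in \S$ and that $\{\Gamma_0\}_{\Gamma_0\in\S}$ is precisely the neighborhood basis of $e$ in the induced Schlichting topology.
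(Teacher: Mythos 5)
Your proof is correct and follows essentially the same route as the paper: both rest on sending $g\in G$ to the unitary $\overline{\pi}^{\rm p}(g)$ and the normalized element $\mu(K_0)^{-1}\chi_{K_0}$ to the orthogonal projection $P_{H^{\Gamma_0}}$, and on verifying the defining relations $g\chi_{K_0}=\chi_{gK_0}$ as operator identities. The only difference is that you spell out the strong continuity of $\overline{\pi}^{\rm p}$ in the Schlichting topology, the extension to $\overline{G}$, and the integration step, all of which the paper's one-paragraph proof treats as implicit.
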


\vskip10pt

\begin{proof} Let  $\Gamma_0$ be a subgroup of $G$ belonging to the class $\tilde \S$. Let $K_0=\overline {\Gamma_0}$, where the closure is taken in the topology of $\overline G$. Let $P_{H^{\Gamma_0}}$ be the orthogonal projection from $\overline H^{\rm p}$ onto the Hilbert space  $H^{\Gamma_0}$.
 The extended representation $\overline{\pi}^{\rm p}$ is constructed
  by mapping $\frac {1}{\mu(K_0)}\chi_{K_0}$ .  The normalization is necessary, since in
  $C^{\ast}(\overline G)$ the convolutor with a subgroup  $K_0$ of $K$ is a non-trivial scalar multiple of a projection, as $(\chi_{K_0})^2=\mu(K_0) \chi_{K_0}$.
The elements in $G$ are represented as unitary operators on $\overline H^{\rm p}$, through the representation $\overline{\pi}^{\rm p}$ introduced in Definition \ref{formalism}.  With this choice, all the relations defining the universal $C^{\ast}$-algebra $\A_\varepsilon(G, \overline{G})$ are  obviously verified.
\end{proof}

\vskip10pt

Given a representation $\pi$ of $G$, such that $\pi |_{\Gamma}$ admits a $\Gamma$-generating,  wandering subspace, we construct a representation as in the Definition $\ref{formalism}$. We will construct directly the Hilbert spaces of $H^{\Gamma_0}$-invariant vectors, without   constructing the space $\V$ from Definition \ref{formalism}.

\vskip10pt

\begin{lemma}\label{Lspacespart1} Let $\pi$ be a unitary representation of $G$
 with the properties introduced (i) and (ii) from  Definition \ref {TC}. In particular $\pi|_\Gamma$ is an integer multiple of the left regular representation $\lambda_\Gamma$. We use the notations from the above mentioned definition.

 Then
 \item(i)  For all $\Gamma_0 \in \S$, $g \in G$, the sum over the coset $ \Gamma_0 g$:
\begin{equation}\label{sums}
\mathop{\sum}\limits_{\theta \in \Gamma_0 g}P_L\pi(\theta)P_L
\end{equation}
\noindent is so-convergent in $B(L)$.

\item(ii)  The subspace
\begin{equation}\label{d}
 \D_{L, \pi} = \{ h \in H | \mathop{\sum}\limits_{\gamma \in \Gamma_0}P_L\pi(\gamma)h \mbox{\ is so-convergent for all} \ \Gamma_0 \in \S \},
\end{equation}
 is a dense $\pi(G)$-invariant subspace of H, containing $L$.


\end{lemma}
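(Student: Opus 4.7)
The plan is to analyse the operators $U_\theta := P_L \pi(\theta) P_L \in B(L)$ as partial isometries of $L$. Let $T_x := \pi(x) P_L \pi(x^{-1})$ denote the projection onto $\pi(x) L$; by hypothesis (ii) of Definition \ref{TC} the family $\{T_x\}_{x \in G}$ is commutative, while the identification $H \cong \ell^2(\Gamma) \otimes L$ with $\pi|_\Gamma \cong \lambda_\Gamma \otimes \mathrm{Id}_L$ gives $T_{\gamma_1} T_{\gamma_2} = 0$ for distinct $\gamma_1, \gamma_2 \in \Gamma$ and $\sum_{\gamma \in \Gamma} T_\gamma = I$. A short computation using commutativity shows $U_\theta U_\theta^\ast = T_e T_\theta$ and $U_\theta^\ast U_\theta = T_e T_{\theta^{-1}}$ are projections in $B(L)$ (via the identification $T_e B(H) T_e \cong B(L)$), so each $U_\theta$ is a partial isometry of $L$ with initial projection $T_e T_{\theta^{-1}}$ and final projection $T_e T_\theta$.

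For part (i) fix $\Gamma_0 \in \S$ and $g \in G$, and take $\theta = \gamma g$ for $\gamma \in \Gamma_0$. First, I claim the initial projections $\{T_e T_{(\gamma g)^{-1}}\}_{\gamma \in \Gamma_0}$ are mutually orthogonal in $B(L)$: for $\gamma_1 \ne \gamma_2$ in $\Gamma$ the identity
\[
\pi(g^{-1}\gamma_1^{-1}) L \cap \pi(g^{-1}\gamma_2^{-1}) L = \pi(g^{-1})\bigl(\pi(\gamma_1^{-1}) L \cap \pi(\gamma_2^{-1}) L\bigr) = \{0\}
\]
holds by $\Gamma$-wandering, so $T_{(\gamma_1 g)^{-1}} T_{(\gamma_2 g)^{-1}} = 0$, and commutativity transfers this to orthogonality after compression by $T_e$. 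Second, I show $\sum_{\gamma \in \Gamma_0} T_{\gamma g} \le [\Gamma_0 : (\Gamma_0)_g]\, I$: decomposing $\Gamma_0 = \bigsqcup_i s_i (\Gamma_0)_g$ and writing $\gamma g = s_i g \gamma''$ with $\gamma'' \in (\Gamma_0)_{g^{-1}} \subseteq \Gamma$, the sum becomes $[\Gamma_0 : (\Gamma_0)_g]$ unitary conjugates of $\sum_{\gamma'' \in (\Gamma_0)_{g^{-1}}} T_{\gamma''}$, itself a projection by wandering on $\Gamma$. Compressing by $T_e$ yields $\sum_\gamma T_e T_{\gamma g} \le [\Gamma_0 : (\Gamma_0)_g]\, T_e$. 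Combining both estimates: pairwise orthogonality of initial projections gives $\sum_{\gamma \in F}\|U_{\gamma g}\xi\|^2 \le \|\xi\|^2$ for any finite $F$ and $\xi \in L$, while the final-projection bound implies that the map $(v_\gamma) \mapsto \sum v_\gamma$ on $\bigoplus_\gamma \mathrm{Range}(T_e T_{\gamma g})$ has squared norm at most $[\Gamma_0 : (\Gamma_0)_g]$, so
\[
\Bigl\|\sum_{\gamma \in F} U_{\gamma g}\xi\Bigr\|^2 \le [\Gamma_0 : (\Gamma_0)_g] \sum_{\gamma \in F} \|U_{\gamma g}\xi\|^2.
\]
A Cauchy criterion then forces the partial sums to form a Cauchy net in $L$, proving SOT-convergence of $\sum_{\theta \in \Gamma_0 g} U_\theta$ in $B(L)$.

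For part (ii), $L \subseteq \D_{L, \pi}$ is immediate from wandering: $P_L \pi(\gamma)\xi = \delta_{\gamma, e}\xi$ for $\xi \in L$, so the defining sum collapses to a single term. For every $g \in G$ and $\xi \in L$, applying part (i) to the coset $\Gamma_0 g$ evaluated at $\xi$ yields convergence of $\sum_{\gamma \in \Gamma_0} P_L \pi(\gamma)\pi(g)\xi = \sum_{\theta \in \Gamma_0 g} P_L \pi(\theta) P_L \xi$ in $L$, so $\pi(g) L \subseteq \D_{L, \pi}$. Since $\D_{L, \pi}$ is obviously a linear subspace, it contains the span of $\bigcup_{g \in G} \pi(g) L$, a manifestly $\pi(G)$-invariant subspace that is already dense because $\pi(\Gamma) L$ generates $H$. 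Invariance of $\D_{L, \pi}$ itself under $\pi(G)$ is then obtained by decomposing an arbitrary $h \in \D_{L, \pi}$ in the wandering basis and applying part (i) coset-by-coset, using the summability built into the definition of $\D_{L, \pi}$ to control the outer index.

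The principal technical obstacle is the estimate in part (i): unlike the initial projections, the final projections $T_e T_{\gamma g}$ are not pairwise orthogonal, so the partial isometries $U_{\gamma g}$ cannot be treated as simultaneously orthogonal, and convergence must be extracted from the combination of orthogonal initial projections with the coset-decomposition bound on $\sum T_e T_{\gamma g}$.
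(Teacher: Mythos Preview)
Your argument for part (i) is correct, but it takes a slightly more laborious route than the paper's. You already perform the coset decomposition $\Gamma_0=\bigsqcup_i s_i(\Gamma_0)_g$, writing $\gamma g=s_ig\gamma''$ with $\gamma''\in(\Gamma_0)_{g^{-1}}\subseteq\Gamma$. At that point you use this only to bound $\sum_\gamma T_eT_{\gamma g}$; the paper instead observes that for fixed $i$ the final projections $T_{s_ig\gamma''}=\pi(s_ig)T_{\gamma''}\pi(s_ig)^{-1}$ are \emph{also} mutually orthogonal (they are unitary conjugates of the orthogonal family $\{T_{\gamma''}\}_{\gamma''\in\Gamma}$). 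Thus within each of the $[\Gamma_0:(\Gamma_0)_g]$ pieces, both initial and final projections of the $U_{s_ig\gamma''}$ are orthogonal, so each piece SOT-converges to a single partial isometry, and the full sum is a finite sum of these. This bypasses your summing-map norm estimate and the Cauchy criterion entirely. Your estimate $\|\sum_F U_{\gamma g}\xi\|^2\le[\Gamma_0:(\Gamma_0)_g]\sum_F\|U_{\gamma g}\xi\|^2$ is correct (it follows from $\|V\|=\|V^*\|$ applied to the summing map, as you indicate), but it is not needed once one notices the orthogonality of final projections within each coset piece.

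For part (ii) your argument coincides with the paper's: both show that $\pi(g)L\subseteq\D_{L,\pi}$ directly from part (i), giving the dense $\pi(G)$-invariant span of $\bigcup_g\pi(g)L$ inside $\D_{L,\pi}$. Your final sentence on the $\pi(G)$-invariance of $\D_{L,\pi}$ itself is only a sketch (the ``outer index'' control via the definition of $\D_{L,\pi}$ is not spelled out, and a uniform bound on $[\Gamma_0:(\Gamma_0)_{g\delta}]$ as $\delta$ varies over $\Gamma$ would need attention), but the paper's proof is equally terse here, deferring this to ``a consequence of part (i)''.
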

\begin{proof}To prove part (i), we note that it is sufficient to prove the statement for $\Gamma_0=\Gamma$, as $\pi|_{\Gamma_0}$ remains an integer multiple of the left regular representation $\lambda_{\Gamma_0}$ of the group $\Gamma_0$. If we sum over the double coset $\Gamma \sigma\Gamma$, we get exactly the Hecke operator associated with the double coset, acting on the space $L$, which is unitarily equivalent to the space of $\Gamma$-invariant vectors.

 In the case where the representation $\pi$ is as in Example \ref{koop},  the sum in formula (\ref{sums}) coincides with the representation in the Koopman unitary representation of the piecewise  bijective transformation $\dot{\wideparen{\Gamma g}}$ introduced in (\cite {Ra5}, Lemma 4(i)).  The sum is
 so-convergent
since we are adding partial isometries corresponding to transformations with disjoint domains. The operator associated to the piecewise bijective  transformation $\dot{\wideparen{\Gamma g}}$ is consequently (\cite{Ra5}) a finite sum (of cardinality
$[\Gamma:\Gamma_g]$) of partial isometries with orthogonal initial spaces. The argument transfers ad litteram to our case because of assumption (ii) in Definition \ref{TC}.
The key feature that is making this argument work is the fact that $\pi(\sigma)L$ is a $\sigma\Gamma\sigma^{-1}$ wandering subspace.

For part (ii), we note that to prove the  density assumption  it suffices to assume that $\Gamma$-equivariantly $H=\ell^2(\Gamma)$. In this case the domain $\D_\V$ is simply $\ell^1(\Gamma)\cap \ell^2(\Gamma)$. The $\pi(G)$-invariance of $H$ is now a consequence of part (i).

\end{proof}

\begin{defn}\label{Lspacespart2}
We use the notations and definitions introduced above.
Let  $\Gamma_0$ be a subgroup  $\S$. Assume that $\Gamma$ is decomposed in cosets over $\Gamma_0$ as
 $\Gamma=\cup\Gamma_0 r_j$, where $r_j$ are  coset representatives for $\Gamma_0$.  Let $L^{\Gamma_0}$  be the subspace of $H$ obtained as a  sum of the following  orthogonal subspaces of $H$:\begin{equation}\label{lgamma0}
L^{\Gamma_0}=\sum\pi(r_j)L.
\end{equation}
\noindent Denote the orthogonal projection from $H$ onto $L^{\Gamma_0}$ by
$P_{L^{\Gamma_0}}$.

We define the Hilbert space  $H^{\Gamma_0}=
\V^{\Gamma_0}$
as the space of formal sums
\begin{equation}\label{sumformula}
\{ \mathop{\sum}\limits_{\gamma_0 \in \Gamma_0}\pi(\gamma_0)h | h \in
\D_{L, \pi} \}.
\end{equation}

\noindent subject to the identification:
\begin{equation}\label{ident}
\mathop{\sum}\limits_{\gamma_0 \in \Gamma_0}\pi(\gamma_0)h = \mathop{\sum}\limits_{\gamma_0 \in \Gamma_0} \pi(\gamma_0)l_0,
\end{equation}
\noindent if $ h\in
\D_{L, \pi}$ and   $l_0$ is the vector in $L^{\Gamma_0}$ given by the formula
\begin{equation}\label{justification}
l_0 = \mathop{\sum}\limits_{\gamma_0 \in \Gamma_0}P_{L^{\Gamma_0}}(\pi(\gamma_0)h).
\end{equation}
The infinite sum in formula (\ref{justification}) is convergent since $h$ belongs to  $
\D_{L, \pi} $.

\end{defn}

\

\

The condition in formula (\ref{ident}) corresponds to the fact that the sum over $\Gamma_0$ is  invariant under changing the summation variable from $\gamma$ into $\gamma\gamma_0$, for a fixed $\gamma_0$ in $\Gamma_0$. This condition   should necessarily hold  true if the vector $h$ is a sum of translates, by elements in $\Gamma_0$, of vectors in $L^{\Gamma_0}$.
Using the above construction, we can introduce the unitary representation of $\overline G$ acting on
vectors invariant to subgroups in $\S$.

\begin{prop}\label{Lspaces}

The positive definite prehilbertian  scalar product $\langle \cdot, \cdot \rangle_\infty$ on $\V_{\infty}=
\mathop{\bigvee}\limits_{\Gamma_0 \in \S} H^{\Gamma_0}$
is defined for $\Gamma_0\in \S$, $l_1 \in
\D_{\Gamma, \pi}$, $l_2 \in L^{\Gamma_0}$, by the formula
\begin{equation}\label {scalar}
\langle \mathop{\sum}\limits_{\gamma_0\in \Gamma_0}\pi(\gamma_0)l_1, \mathop{\sum}\limits_{\gamma_0' \in \Gamma_0}\pi(\gamma_0')l_2 \rangle_{\infty} = \frac{1}{[\Gamma : \Gamma_0]}\langle \mathop{\sum}\limits_{\gamma_0 \in \Gamma_0} \pi(\gamma_0)l_1, l_2 \rangle.
\end{equation}
O the right hand side of the above equality we use the scalar product on $H$.
Clearly $\V^{\Gamma_0}$ embeds isometrically into $\V^{\Gamma_1}$ for $\Gamma_1 \subseteq \Gamma_0$.
Let  $\overline H^{\rm p}$ be the Hilbert space completion of  $\V_{\infty}$ with respect to the scalar product $\langle \cdot, \cdot \rangle_\infty$.

Then, the unitary
 representation $\pi$ determines a  unitary representation  $\overline\pi^{\rm p}$ into the unitary group of $\overline H^{\rm p}$, having the properties from Lemma \ref{propertiespi}   .

 \end{prop}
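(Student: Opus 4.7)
The plan is to verify four things: (a) formula (\ref{scalar}) defines a positive-definite prehilbertian form on $\V_\infty$, (b) this form is compatible with the identification (\ref{ident}), (c) the inclusions $H^{\Gamma_0} \subseteq H^{\Gamma_1}$ are isometric for $\Gamma_1 \subseteq \Gamma_0$, and (d) $\pi(g)$ descends to a unitary $H^{\Gamma_0}\to H^{g\Gamma_0 g^{-1}}$. All four checks reduce to bookkeeping in the $\Gamma$-wandering decomposition $H \cong \ell^2(\Gamma)\otimes L$: writing $h = \sum_{\gamma\in \Gamma}\pi(\gamma) h_\gamma$ with $h_\gamma = P_L \pi(\gamma^{-1}) h \in L$ turns every formal sum into an explicit sum over $\Gamma$-cosets.

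The core computation is as follows. Fix $\Gamma_0 \in \S$ with $\Gamma = \bigsqcup_j \Gamma_0 r_j$ and take $l_0 = \sum_j \pi(r_j) a_j$, $l_0' = \sum_k \pi(r_k) b_k$ in $L^{\Gamma_0}$. The $\Gamma$-wandering property forces $\langle \pi(r_k^{-1}\gamma_0 r_j) a_j, b_k\rangle_H = 0$ unless $r_k^{-1}\gamma_0 r_j = e$, which for $\gamma_0 \in \Gamma_0$ occurs only at $j=k$ and $\gamma_0 = e$; hence $\sum_{\gamma_0}\langle\pi(\gamma_0) l_0, l_0'\rangle_H = \langle l_0, l_0'\rangle_H$, so (\ref{scalar}) on $L^{\Gamma_0}\times L^{\Gamma_0}$ reduces to $\tfrac{1}{[\Gamma:\Gamma_0]}\langle l_0, l_0'\rangle_H$, a positive-definite rescaling of the ambient inner product. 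To verify compatibility with (\ref{ident}) for general $h \in \D_{L,\pi}$, I would compute $l_0 = \sum_{\gamma_0}P_{L^{\Gamma_0}}\pi(\gamma_0) h = \sum_j \pi(r_j)\bigl(\sum_{\gamma \in \Gamma_0 r_j} h_\gamma\bigr)$, and check that $\sum_{\gamma_0}\langle\pi(\gamma_0) h, l_2\rangle_H$ telescopes along $\Gamma_0$-cosets to exactly $\langle l_0, l_2\rangle_H$ for any $l_2 \in L^{\Gamma_0}$. For the inclusion with $\Gamma_1 \subseteq \Gamma_0$, writing $\Gamma_0 = \bigsqcup_i \Gamma_1 t_i$ yields $\sum_{\gamma_0}\pi(\gamma_0) l_0 = \sum_{\gamma_1}\pi(\gamma_1) l_1$ with $l_1 := \sum_i \pi(t_i) l_0 \in L^{\Gamma_1}$; pairwise orthogonality of the $\pi(t_i) l_0$ (again from $\Gamma$-wandering, since the $t_i r_j$ are distinct $\Gamma_1$-coset representatives in $\Gamma$) gives $\|l_1\|_H^2 = [\Gamma_0:\Gamma_1]\|l_0\|_H^2$, and the index factors in (\ref{scalar}) cancel to yield isometry.

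Finally, $\overline\pi^{\rm p}(g)$ is defined on $H^{\Gamma_0}$ by $\sum_{\gamma_0}\pi(\gamma_0) h \mapsto \sum_{\tilde\gamma_0 \in g\Gamma_0 g^{-1}}\pi(\tilde\gamma_0)(\pi(g) h)$ via the identity $g\gamma_0 = (g\gamma_0 g^{-1}) g$; isometry follows from unitarity of $\pi(g)$ combined with the norm formula derived above, and the representation property is immediate, so a density argument extends $\overline\pi^{\rm p}$ to a unitary representation on $\overline H^{\rm p}$. The main subtlety is this $G$-action step: conjugation $\Gamma_0 \mapsto g\Gamma_0 g^{-1}$ generally leaves $\S$ for $\tilde\S$, so there is no canonical $L^{g\Gamma_0 g^{-1}}$-space on the target side built from $\Gamma$-coset representatives. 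The remedy is to pass first to the further refinement $g\Gamma_0 g^{-1}\cap \Gamma_0 \in \S$, using closure of $\S$ under the operation (\ref{gammasigma}), so that the isometric embeddings of the previous paragraph and the $\pi(g)$-action interact consistently and the identity $\bigvee_{\tilde\S}\V^{\Gamma_0} = \bigvee_{\S}\V^{\Gamma_0}$ from (\ref{vinfinity}) is respected.
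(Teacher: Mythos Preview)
Your approach is essentially the paper's: reduce the scalar product on $L^{\Gamma_0}\times L^{\Gamma_0}$ to $\tfrac{1}{[\Gamma:\Gamma_0]}\langle\cdot,\cdot\rangle_H$ via the $\Gamma$-wandering property, verify the isometric inclusion $H^{\Gamma_0}\subseteq H^{\Gamma_1}$ by expanding $l_1=\sum_i\pi(t_i)l_0$ and using orthogonality of the summands, and define the $G$-action by first refining to a subgroup of $\S$ small enough that its $g$-conjugate again lies in $\S$.

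There is one omission in your unitarity step. You write that ``isometry follows from unitarity of $\pi(g)$ combined with the norm formula derived above,'' but unitarity of $\pi(g)$ on $H$ only gives $\|\pi(g)l_0\|_H=\|l_0\|_H$; the normalization factors $\tfrac{1}{[\Gamma:\Gamma']}$ on the source and target must also agree. Passing from $H^{(\Gamma_0)_{g^{-1}}}$ to $H^{(\Gamma_0)_g}=H^{g(\Gamma_0)_{g^{-1}}g^{-1}}$ requires $[\Gamma:(\Gamma_0)_{g^{-1}}]=[\Gamma:(\Gamma_0)_g]$. This holds precisely because of the paper's standing hypothesis $[\Gamma:\Gamma_{g^{-1}}]=[\Gamma:\Gamma_g]$: conjugation by $g$ carries the inclusion $(\Gamma_0)_{g^{-1}}\subseteq\Gamma_{g^{-1}}$ to $(\Gamma_0)_g\subseteq\Gamma_g$, so $[\Gamma_{g^{-1}}:(\Gamma_0)_{g^{-1}}]=[\Gamma_g:(\Gamma_0)_g]$, and the two full indices in $\Gamma$ then differ only by the top-level factor, which the hypothesis equates. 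The paper's proof invokes this index equality explicitly; without it $\overline\pi^{\rm p}(g)$ is merely a bounded bijection, not a unitary.
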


 Before proving the proposition, we note that formula (\ref{scalar}), which is equivalent to formula (\ref{scalarpet}) below,   is  a generalization of the Petersson scalar product formula \cite{Pe}.

  Indeed, with the notations from the introduction, consider the case of two automorphic forms of weight $n\in \mathbb N$, which are hence $\Gamma$-invariant vectors, as above, for the representation $\pi_n$. To obtain the scalar product of the two automorphic forms  one multiplies one of them with the characteristic function $\chi_F$ of a fundamental domain, and then uses  the usual scalar product  from $L^2(\mathbb H,\nu_n)$, which extends the scalar product on $H_n$. This is exactly what is performed in the next formula, by replacing the operator $M_{\chi_F}$ (from formula (\ref{suma})) with the projection $P_L$, which has similar properties to  $M_{\chi_F}$.

  One can  establish an equivalent expression  of formula (\ref{scalar}), analogous to the Petersson scalar product formula.
For $h_1,h_2\in   \D_{L, \pi}$, we let $$l_i= \mathop{\sum}\limits_{\gamma \in \Gamma}P_L \pi(\gamma)h_i, \quad  i=1,2.$$
 Using  the identification in formula  (\ref{ident}), we obtain that formula
 (\ref{scalar}) is equivalent to:
 \begin{equation}\label {scalarpet}
\langle \mathop{\sum}\limits_{\gamma\in \Gamma}\pi(\gamma)h_1, \mathop{\sum}\limits_{\gamma' \in \Gamma}\pi(\gamma')h_2 \rangle_{\infty} = \langle P_L\big( \mathop{\sum}\limits_{\gamma \in \Gamma} \pi(\gamma)\big)h_1, \mathop{\sum}\limits_{\gamma' \in \Gamma}\pi(\gamma') h_2 \rangle.
\end{equation}
This is further equal to
$$
\langle P_L\big( \mathop{\sum}\limits_{\gamma \in \Gamma} \pi(\gamma)\big)h_1,P_L\big( \mathop{\sum}\limits_{\gamma' \in \Gamma}\pi(\gamma') h_2\big) \rangle=\langle l_1,l_2\rangle.
$$
For more general subgroups $\Gamma_0\in \S$, formula (\ref{scalarpet}) of the scalar product is similar, with the difference that instead of $P_L$, one uses the projection $P_{L^{\Gamma_0}}$ onto a  $\Gamma_0$-wandering, generating subspace $\pi$ .

\vskip10pt

\begin{proof}[Proof of Proposition \ref{Lspaces}]  Let $\Gamma_0, \Gamma_1$ be two subgroups $\S$ such that $\Gamma_1 \subseteq \Gamma_0$.  We split first $\Gamma_0$ into cosets over $\Gamma_1$.
Using the coset representation, we split  the sum in formula (\ref{sumformula}) for the vectors in $\V^{\Gamma_0}$ into $[\Gamma_0 : \Gamma_1]$ vectors, which all belong to $\V^{\Gamma_1}$.   Then $\V^{\Gamma_0}$ is embedded into $\V^{\Gamma_1}$.
%
Indeed, if $\Gamma_0=\bigcup r_j \Gamma_1$, the embedding is realized as follows: if
\begin{equation}\label{formeta}
\eta=\mathop{\sum}\limits_{\gamma_0 \in \Gamma_0} \pi(\gamma_0)l_0,\quad  l_0\in L^{\Gamma_0}
\end{equation}
 is a generic vector in $\V^{\Gamma_0}$, then we identify the vector  $\eta$
with the following element of the vector space $\V^{\Gamma_1}$:
$$\eta_1=\mathop{\sum}\limits_{j}\mathop{\sum}\limits_{\gamma_1 \in \Gamma_1}\pi(\gamma_1)  l_0= \mathop{\sum}\limits_{\gamma_1 \in \Gamma_1}\pi(\gamma_1)\big[\mathop{\sum}\limits_{j}\pi(r_j) l_0\big]\in  \V^{\Gamma_1} .$$

The embedding $\V^{\Gamma_0}\subseteq \V^{\Gamma_1}$is isometric. Indeed, for  $l_0$ in $L^{\Gamma_0}$ as above,  the square of the  norm of the vector $\eta\in H^{\Gamma_0} $ introduced in formula (\ref{formeta}), is determined according to formula (\ref{scalar}). This  is equal to $$ \frac{1}{[\Gamma : \Gamma_0]}\langle l_0,l_0\rangle,$$
where the scalar product is computed in $H$.

On the other hand, according to the same formula, the  norm of the vector $\eta_1$ in  $\V^{\Gamma_1}$ is
$$ \frac{1}{[\Gamma : \Gamma_1]}\langle \big[\mathop{\sum}\limits_{j}\pi(r_j) l_0\big],\big[\mathop{\sum}\limits_{k}\pi(r_k) l_0\big] \rangle.$$
The set $\{r_j\}$ has cardinality $[\Gamma_0 :\Gamma_1]$. Moreover the vectors $\pi(r_j) l_0$ are pairwise orthogonal.
Hence the square of the norm of the vector $\eta_1$ is
$$\frac{1}{[\Gamma : \Gamma_1]}[\Gamma_0 :\Gamma_1]\langle l_0,l_0\rangle=[\Gamma : \Gamma_1]\langle l_0,l_0\rangle.$$
\noindent Hence the embedding  $\V^{\Gamma_0}$  into $\V^{\Gamma_1}$ is isometric.

The representation $\pi_{\V}$ is defined as follows. Let $g \in G$, $\Gamma_0 \in \S$, $l \in L$, and consider the vector
$$
\eta = \mathop{\sum}\limits_{\gamma_0 \in \Gamma_0}\pi(\gamma_0)l \in \V^{\Gamma_0}.
$$

Then we split the coset $g\Gamma_0$ as a disjoint union $$g\Gamma_0=\mathop{\bigcup}\limits_{j}\Gamma_0^jy_j$$ of  cosets of smaller subgroups $\Gamma_0^j$  in $\S$,  such that $$g\Gamma_0^j g^{-1}= \Gamma_1^j\subseteq \Gamma,\quad \Gamma_1^j\in \S. $$
This  is always possible,  by considering cosets of $\Gamma_0$ over subgroups of $\Gamma_0\cap \Gamma_{g^{-1}}$. Then we define
\begin{equation}\label{splitting}
\pi_{\V}(g)\eta = \mathop{\sum}\limits_{j}\mathop{\sum}\limits_{\gamma_j \in \Gamma_1^j}\pi(\gamma_j)(\pi(gy_j)l).
\end{equation}

By the   assumptions on the domain in formula (\ref{d}), it follows that $\pi_\V(g)\eta$ belongs to $\V_{\infty}$. This is because $\pi_\V(g)$ maps $\V^{\Gamma_0^j\cap \Gamma_{g^{-1}}}$ onto $\V^{\Gamma_1^j\cap \Gamma_{g}}$.   Since for all $g\in G$ the indices of the subgroups $\Gamma_{g^{-1}}$ and $\Gamma_g$ are equal, the definition of the scalar product proves that  $\pi_{\V}$ maps isometrically $\V^{\Gamma_0}$ into $\V_{\infty}$.
Obviously, if $g\in G$ and $\Gamma_0\in S$, then $\V^{g\Gamma_0g^{-1}}$ is contained in $\V^{g\Gamma_0g^{-1}\cap \Gamma_0}$. But
$g\Gamma_0g^{-1}\cap \Gamma_0$ is a subgroup in $\S$, and hence we have the alternative formula $\V_{\infty}=
\mathop{\bigvee}\limits_{\Gamma_0 \in \tilde \S} H^{\Gamma_0}$.

Consequently, we obtain a unitary representation $\overline{\pi}^{\rm p}$ into the unitary group of the Hilbert space $\overline{H}^{\rm p}$, as in Definition $\ref{formalism}$.

\end{proof}

\vskip10pt

  Recall that for $\Gamma_0$ in $\S$ and $\sigma\in G$, we denote
  $$(\Gamma_0)_\sigma=
 \sigma \Gamma_0\sigma^{-1}\cap \Gamma_0.$$ The index
 $[\Gamma_0: (\Gamma_0)_\sigma]$ will intervene in the following computations.

   For $\Gamma_0$ as above, let $K_0= \overline{\Gamma_0}$ be the closure of $\Gamma_0$ in the profinite completion  $K$ of $\Gamma$.  In the next statement, we find an explicit matrix representation of the image through the representation $\overline {\pi}^{\rm p}$ of the convolution operator with the characteristic function of the double coset $K_0\sigma K_0$. This is obviously  the Hecke operator associated to the double coset $\Gamma_0\sigma\Gamma_0$, on $\Gamma_0$-invariant vectors, normalized by a constant.
   We obtain first the precise normalization constants required for the Hecke operators.
   The normalization factor that we obtain is the index of the subgroup $(\Gamma_0)_\sigma$ in $\Gamma_0$. It is necessary, because in the $C^\ast$-algebra $C^\ast(\overline G,G)$, if $K_0$ is a subgroup of $K$, then the convolution operator with $\chi_{K_0\sigma K_0}$ is a  scalar multiple, by the factor   
   $ [K_0:(K_0)_\sigma]$, of the ordered  product of the convolution operators by $\chi_{K_0}$, $\sigma$ and  $\chi_{K_0}$.

  \begin{lemma}\label{heckeconverted}
  We refer to the notations introduced above. For every $\sigma \in G$ and every subgroup $K_0$ as above, we have:
    \begin{equation}\label{completehecke}
    \overline {\pi}^{\rm p}
    \big(
     \chi_{K_0\sigma K_0}
     \big)=
     \overline {\pi}^{\rm p}
     \big(\chi_{K_0\sigma K_0}
     \big)=
{ [\Gamma:(\Gamma_0)_\sigma]}
    \big(
    \overline{\pi}^{\rm p}(\chi_{K_0})
    \overline{\pi}^{\rm p}(\sigma)
    \overline{\pi}^{\rm p}(\chi_{K_0})
    \big).
    \end{equation}
\end{lemma}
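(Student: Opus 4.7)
The plan is to compute each side of (\ref{completehecke}) directly as operators on $\overline H^{\mathrm p}$. Recall from the proof of Proposition \ref{cala} that $\overline{\pi}^{\mathrm p}(\chi_{K_0}) = \mu(K_0)\, P_{H^{\Gamma_0}}$, so the right-hand side of (\ref{completehecke}) equals $[\Gamma:(\Gamma_0)_\sigma]\,\mu(K_0)^2\, P_{H^{\Gamma_0}}\overline{\pi}^{\mathrm p}(\sigma)P_{H^{\Gamma_0}}$. Using the normalization $\mu(K)=1$ one has $\mu(K_0)=1/[\Gamma:\Gamma_0]$, and the telescoping identity $[\Gamma:(\Gamma_0)_\sigma]=[\Gamma:\Gamma_0]\cdot [\Gamma_0:(\Gamma_0)_\sigma]$ reduces the claim to establishing
\begin{equation*}
\overline{\pi}^{\mathrm p}(\chi_{K_0\sigma K_0}) = [\Gamma_0:(\Gamma_0)_\sigma]\,\mu(K_0)\, P_{H^{\Gamma_0}}\,\overline{\pi}^{\mathrm p}(\sigma)\,P_{H^{\Gamma_0}}.
\end{equation*}

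Next I would decompose the double coset into left cosets, $K_0 \sigma K_0 = \bigsqcup_{j=1}^{N} \sigma_j K_0$, and note that, since $K_0 = \overline{\Gamma_0}$ and $(K_0)_\tau = \overline{(\Gamma_0)_\tau}$ for every $\tau\in G$, we have $N = [K_0:(K_0)_{\sigma^{-1}}] = [\Gamma_0:(\Gamma_0)_{\sigma^{-1}}] = [\Gamma_0:(\Gamma_0)_\sigma]$, the last equality being the standing index-symmetry assumption. Density of $\Gamma_0$ in $K_0$ allows us to choose $\sigma_j = \gamma_j\sigma$ with $\{\gamma_j\}_{j=1}^{N}$ a transversal for $\Gamma_0/(\Gamma_0)_\sigma$. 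Invoking the crossed-product relation $g\,\chi_{K_0} = \chi_{gK_0}$ from Definition \ref{two} gives
\begin{equation*}
\overline{\pi}^{\mathrm p}(\chi_{K_0\sigma K_0}) = \sum_{j=1}^N \overline{\pi}^{\mathrm p}(\sigma_j)\,\overline{\pi}^{\mathrm p}(\chi_{K_0}) = \mu(K_0)\sum_{j=1}^N \overline{\pi}^{\mathrm p}(\gamma_j)\,\overline{\pi}^{\mathrm p}(\sigma)\,P_{H^{\Gamma_0}}.
\end{equation*}

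The last step applies Lemma \ref{propertiespi}: for an inclusion $\Gamma_1\subseteq \Gamma_0$ in $\tilde\S$, the orthogonal projection $H^{\Gamma_1}\to H^{\Gamma_0}$ is the averaging $[\Gamma_0:\Gamma_1]^{-1}\sum_{\gamma\in\Gamma_0/\Gamma_1}\overline{\pi}^{\mathrm p}(\gamma)$. Since $\overline{\pi}^{\mathrm p}(\sigma)$ carries $H^{\Gamma_0}$ isometrically onto $H^{\sigma\Gamma_0\sigma^{-1}}\subseteq H^{(\Gamma_0)_\sigma}$, specializing the averaging formula to $\Gamma_1 = (\Gamma_0)_\sigma$ yields
\begin{equation*}
\sum_{j=1}^N \overline{\pi}^{\mathrm p}(\gamma_j)\,\overline{\pi}^{\mathrm p}(\sigma)\,P_{H^{\Gamma_0}} = [\Gamma_0:(\Gamma_0)_\sigma]\, P_{H^{\Gamma_0}}\,\overline{\pi}^{\mathrm p}(\sigma)\,P_{H^{\Gamma_0}},
\end{equation*}
which combined with the previous display produces the desired identity.

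I expect the only genuinely delicate point to be the verification that $(K_0)_\sigma = \overline{(\Gamma_0)_\sigma}$, so that the profinite and discrete indices coincide and coset representatives can be chosen inside $\Gamma_0\sigma$; this follows from the continuity of conjugation in $\overline G$ and the definition of the Schlichting topology. Once this is in hand, the remainder is index bookkeeping aligning the intrinsic factor $[\Gamma_0:(\Gamma_0)_\sigma]$ with the $[\Gamma:(\Gamma_0)_\sigma]$ appearing in (\ref{completehecke}) via $\mu(K_0)=[\Gamma:\Gamma_0]^{-1}$, and no analytic input beyond Proposition \ref{cala} and Lemma \ref{propertiespi} is required.
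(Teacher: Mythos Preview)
Your argument is correct and rests on the same coset decomposition of $K_0\sigma K_0$ as the paper's proof; the only real difference is the level at which the computation is carried out. The paper works abstractly in the universal algebra $\A(G,\overline G)$: it writes $K_0=\bigcup_i s_i(K_0)_\sigma$, uses the convolution identity $L(\chi_{K_1})L(\chi_{K_0})=\mu(K_1)L(\chi_{K_0})$ for $K_1\subseteq K_0$, and obtains $L(\chi_{K_0})L_\sigma L(\chi_{K_0})=\mu((K_0)_{\sigma^{-1}})L(\chi_{K_0\sigma K_0})$ before applying $\overline\pi^{\rm p}$. You instead work directly on $\overline H^{\rm p}$, replacing the subgroup convolution identity by the averaging formula of Lemma \ref{propertiespi}. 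These are two incarnations of the same fact; the paper's version has the minor advantage that the identity is established in $\A(G,\overline G)$ and hence holds in every representation of that algebra, not just $\overline\pi^{\rm p}$.

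One small inaccuracy: the number $N$ of left $K_0$-cosets in $K_0\sigma K_0$ is $[K_0:(K_0)_\sigma]$ directly, not $[K_0:(K_0)_{\sigma^{-1}}]$; your coset-representative argument in fact shows this (since $\gamma_1\sigma K_0=\gamma_2\sigma K_0$ iff $\gamma_2^{-1}\gamma_1\in(K_0)_\sigma$). The appeal to index symmetry is therefore unnecessary at that step, though it does no harm.
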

\begin{proof}
We work in the universal algebra $\A=C^\ast(G, \overline G)$. For $\sigma\in \overline G$ denote the convolution by  $\sigma$ by $L_\sigma$.   
Denote the convolution  operator   by a continuous function $f$ on $\overline G$ by $L(f)$.
Clearly, for every measurable subset $A$ of $\overline G$ we have
$$L(\chi_A) L_\sigma=L(\chi_{A\sigma}),\quad  L_\sigma L(\chi_A)= L(\chi_{\sigma A}),\quad \sigma\in G.$$
 Then, obviously,
\begin{equation}\label{convolution1}
L(f)=\int_{\overline G} f(g)L_g{\rm d}g.
\end{equation}\label{projections}
\begin{equation}
(L(\chi_{K_0}))^2= \nu(K_0) L(\chi_{K_0}).
\end{equation}
\noindent In particular, $\nu(K_0)^{-1} L(\chi_{K_0})$ is a projection.

The same argument gives that if $K_0,K_1$ are   subgroups, as above, and $K_1$ is a subgroup of $K_0$, then
\begin{equation}\label{lemmasubgroup}
L(\chi_{K_1})L(\chi_{K_0})= L(\chi_{K_0})L(\chi_{K_1})= \nu(K_1) L(\chi_{K_0}).
\end{equation}
We decompose $K_0$ as a reunion right  cosets for
the subgroup $(K_0)_\sigma= K_0\cap \sigma K_0\sigma^{-1},$ with coset representatives $s_i\in K_0$, $i=1,2,..., [K_0:(K_0)_\sigma]$. These  are the same as the coset representatives for $(\Gamma_0)_\sigma$ in $\Gamma_0$. Using formula (\ref{lemmasubgroup}) we obtain
$$
\begin{aligned}
L(\chi_{K_0}) & L_\sigma L(\chi_{K_0}) =\sum_i L(s_i\chi_{(K_0)_\sigma})L_\sigma L(\chi_{K_0}) \\
& =\sum_i L(s_i\sigma) L(\chi_{(K_0)_{\sigma^{-1}}}) L(\chi_{K_0})  =
\nu((K_0)_{\sigma^{-1}})\sum_i L(s_i\sigma)  L(\chi_{K_0}) \\ & 
 =\nu((K_0)_{\sigma^{-1}})L(\chi_{K_0\sigma K_0}).
\end{aligned}$$

\end{proof}

\vskip10pt

As we previously noted, the Hilbert spaces $H^{\Gamma_0}$, $\Gamma_0\in \S$ are isometrically isomorphic to $l^2(\Gamma_0 \setminus \Gamma)\otimes L$, where the scalar product on $l^2(\Gamma_0 \setminus \Gamma)$ is chosen so  that the embeddings $l^2(\Gamma_0 \setminus \Gamma) \subseteq l^2(\Gamma_1 \setminus \Gamma_0)$ are isometric for all $\Gamma_0 \subseteq \Gamma_1$.
It turns out that the entries  of the matrices representing  Hecke operators are sums as in formula (\ref{sums}). The previous lemma indicates clearly that, after normalization, the formula of the classical Hecke operator corresponding to  the sum over cosets is ${[\Gamma:\Gamma_\sigma]}P_{H^{\Gamma_0}}\overline{\pi}^{\rm p}(\sigma)P_{H^{\Gamma_0}}$, $\sigma \in G$, $\Gamma_0\in S$

\begin{thm}\label{hecke} We use the notations and definitions  previously introduced in this section.
Fix  a subgroup $\Gamma_0$ in $\S$. We choose a family $(s_i)$ of  right coset representatives for $\Gamma_0 \subseteq \Gamma$.
Consider, as  in  formula (\ref{lgamma0}), the following Hilbert space:
\begin{equation}\label{lgamma}
L^{\Gamma_0} = \mathop{\oplus}\limits_{i=1}^{[\Gamma : \Gamma_0]}\pi(s_i)L.
\end{equation}

\noindent
The  Hilbert space norm on
$L^{\Gamma_0}$ is normalized so that the embedding of $L$ into $L^{\Gamma_0}$, defined by the correspondence
\begin{equation}\label{renorm}
l\in L\rightarrow  \mathop{\oplus}\limits_{i=1}^{[\Gamma : \Gamma_0]}\pi(s_i)l \in L^{\Gamma_0} = \mathop{\oplus}\limits_{i=1}^{[\Gamma : \Gamma_0]}\pi(s_i)L,
\end{equation}
is isometric.
The space $L^{\Gamma_0}$ is obviously identified with a subspace of $H$. In this case the space $L^{\Gamma_0}$  is endowed  with a non-normalized scalar product inherited from $H$. Let $P_{L^{\Gamma_0}}$ be the orthogonal projection from $H$ onto $L^{\Gamma_0}$.

Then,  for $\sigma \in G$, the Hecke operator ${[\Gamma:\Gamma_\sigma]}P_{H^{\Gamma_0}}\overline{\pi}^{\rm p}(\sigma)P_{H^{\Gamma_0}}$ is unitarily equivalent to the bounded operator
\begin{equation}\label{agamma}
A(\Gamma_0\sigma\Gamma_0) = \mathop{\sum}\limits_{\theta \in \Gamma_0\sigma\Gamma_0}P_{L^{\Gamma_0}}\pi(\theta)P_{L^{\Gamma_0}}.
\end{equation}

\end{thm}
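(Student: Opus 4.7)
The plan is to exhibit an explicit unitary $U: L^{\Gamma_0} \to H^{\Gamma_0}$, defined by $U(l_0) = \sum_{\gamma_0 \in \Gamma_0} \pi(\gamma_0) l_0$ interpreted as a formal sum in $H^{\Gamma_0}$ via Definition~\ref{Lspacespart2}, and to verify that it intertwines $A(\Gamma_0 \sigma \Gamma_0)$ with the stated Hecke operator. The construction is natural: since $L$ is $\Gamma$-wandering and generating and the coset representatives $s_i$ lie in $\Gamma$, the subspace $L^{\Gamma_0} = \bigoplus_i \pi(s_i) L$ is $\Gamma_0$-wandering and generating for $\pi|_{\Gamma_0}$, so Proposition~\ref{Lspaces} applied to $\pi|_{\Gamma_0}$ already realizes $H^{\Gamma_0}$ as the space of formal $\Gamma_0$-sums of vectors in $L^{\Gamma_0}$ with scalar product (\ref{scalar}).

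The isometry of $U$ will follow from the Petersson-type formula (\ref{scalarpet}): for $l_0 \in L^{\Gamma_0}$, the $\Gamma$-wandering property forces $\langle \pi(\gamma_0) l_0, l_0 \rangle_H = 0$ unless $\gamma_0 = e$ (the cross-terms between different summands $\pi(s_i) \ell_i$ of $l_0$ survive only when $\gamma_0 s_i = s_j$ in $\Gamma$, which forces $\gamma_0 = e$), so $\sum_{\gamma_0 \in \Gamma_0}\langle \pi(\gamma_0) l_0, l_0 \rangle_H = \|l_0\|_H^2$. Combined with the factor $\frac{1}{[\Gamma : \Gamma_0]}$ in (\ref{scalar}) and the renormalization (\ref{renorm}) $\|l_0\|_{L^{\Gamma_0}}^2 = \frac{1}{[\Gamma:\Gamma_0]} \|l_0\|_H^2$, this gives $\|U(l_0)\|_{H^{\Gamma_0}} = \|l_0\|_{L^{\Gamma_0}}$. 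For the intertwining, let $T = \sum_k \overline{\pi}^{\rm p}(\sigma_k)$ be the classical Hecke operator, with $\Gamma_0 \sigma \Gamma_0 = \sqcup_k \sigma_k \Gamma_0$ a left-coset decomposition. Applying $T$ to $U(l_0)$ produces the formal sum $\sum_{\theta \in \Gamma_0 \sigma \Gamma_0} \pi(\theta) l_0$. Switching to a right-coset decomposition $\Gamma_0 \sigma \Gamma_0 = \sqcup_k \Gamma_0 \tau_k$ rewrites this as $\sum_{\gamma_0 \in \Gamma_0} \pi(\gamma_0) h$ with $h = \sum_k \pi(\tau_k) l_0 \in \D_{L,\pi}$, the summability coming from Lemma~\ref{Lspacespart1}(i). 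By the identification (\ref{ident})--(\ref{justification}), this formal sum represents $U(l_0')$ with
\begin{equation*}
l_0' = \sum_{\gamma_0 \in \Gamma_0} P_{L^{\Gamma_0}} \pi(\gamma_0) h = P_{L^{\Gamma_0}} \sum_{\theta \in \Gamma_0 \sigma \Gamma_0} \pi(\theta) l_0 = A(\Gamma_0 \sigma \Gamma_0) l_0,
\end{equation*}
where the last equality uses $P_{L^{\Gamma_0}} l_0 = l_0$ to insert the right-hand projection of (\ref{agamma}). Hence $T \circ U = U \circ A(\Gamma_0 \sigma \Gamma_0)$, giving the unitary equivalence.

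The remaining step is to identify $T$ with the scalar multiple $[\Gamma:\Gamma_\sigma]\, P_{H^{\Gamma_0}} \overline{\pi}^{\rm p}(\sigma) P_{H^{\Gamma_0}}$. This will come from the identity $\overline{\pi}^{\rm p}(\chi_{K_0}) = \mu(K_0) P_{H^{\Gamma_0}}$, which holds because $\mu(K_0)^{-1}\chi_{K_0}$ is the averaging idempotent onto $K_0$-invariants and the $K_0$-fixed vectors in $\overline{H}^{\rm p}$ are precisely $H^{\Gamma_0}$, together with the slicing $\overline{\pi}^{\rm p}(\chi_{K_0 \sigma K_0}) = \mu(K_0)\, T\, P_{H^{\Gamma_0}}$ obtained by decomposing the double coset into left $K_0$-cosets; combining with Lemma~\ref{heckeconverted} and using $\mu(K_0) = 1/[\Gamma : \Gamma_0]$ produces the required scalar factor. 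The main (and essentially only) obstacle is the careful bookkeeping of normalization constants between Haar measure, subgroup indices $[\Gamma : \Gamma_0]$, $[\Gamma_0 : (\Gamma_0)_\sigma]$, and the two equivalent coset-sum presentations of the Hecke operator; the conceptual content of the proof lies in the construction of $U$ together with the formal-sum identification (\ref{ident})--(\ref{justification}).
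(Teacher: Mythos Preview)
Your proof is correct and follows essentially the same approach as the paper: both construct the same unitary intertwiner $U=W^{\Gamma_0}$ mapping $l_0\in L^{\Gamma_0}$ to the formal $\Gamma_0$-sum $\sum_{\gamma_0}\pi(\gamma_0)l_0$, and both reduce the intertwining relation to recognizing $\sum_{\theta\in\Gamma_0\sigma\Gamma_0}\pi(\theta)l_0$ as $U\bigl(A(\Gamma_0\sigma\Gamma_0)l_0\bigr)$ via the identification (\ref{ident})--(\ref{justification}). The only packaging difference is that the paper computes $P_{H^{\Gamma_0}}\overline{\pi}^{\rm p}(\sigma)$ directly from the splitting formula (\ref{splitting}) and picks up the index factor when projecting from $H^{\Gamma_\sigma}$ back to $H^{\Gamma}$, whereas you route through the classical double-coset operator $T=\sum_k\overline{\pi}^{\rm p}(\sigma_k)$ and then invoke Lemma~\ref{heckeconverted} for the normalization; these are equivalent bookkeeping choices, and your route is arguably a bit cleaner since it isolates the index computation in a lemma already proved.
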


\vskip10pt

\begin{proof} Given $\Gamma_0 \in \S$, and a choice for the coset representatives $$\Gamma = \bigcup \Gamma_0s_i,$$ we construct a unitary operator $W^{\Gamma_0}$ from $L^{\Gamma_0} = \oplus \pi(s_i)L$ into $H^{\Gamma_0}$ as follows. We define, for vectors $l_i \in L$, the following isometry:
$$
W^{\Gamma_0}(\oplus \pi(s_i)l_i) = \mathop{\sum}\limits_{i}\mathop{\sum}\limits_{\gamma \in \Gamma_0}\pi(\gamma_0)\pi(s_i)l_i.
$$

We prove that, for $\sigma \in G$, the following diagram,
$$
\begin{array}{ccccc}
& H^{\Gamma_0} & \mathop{\longleftarrow}\limits^{W^{\Gamma_0}} & \oplus \pi(s_i)L & \\[3mm]
{ [\Gamma_0:(\Gamma_0)_\sigma]}P_{H^{\Gamma_0}}\overline{\pi}^{\rm p}(\sigma)P_{H^{\Gamma_0}} & \downarrow & & \downarrow & \mathop{\sum}\limits_{\theta \in \Gamma_0\sigma\Gamma_0}P_{L^{\Gamma_0}}\pi(\theta)P_{L^{\Gamma_0}}\\[3mm]
& H^{\Gamma_0} & \mathop{\longleftarrow}\limits^{W^{\Gamma_0}} & \oplus \pi(s_i)L &
\end{array}
$$
is  commutative. To do this we  use  the formula of the unitary operators $\overline{\pi}^{\rm p}(\theta)$, $\theta\in G$, defined in the proof of Proposition \ref{Lspaces}.

It is sufficient to verify the above commutativity of the diagram   in the case $\Gamma = \Gamma_0$; the cases corresponding to other  subgroups $\Gamma_0\in \S$ are a consequence.
 Consider a vector $l \in L$. We have that
$$
W^{\Gamma}l = \mathop{\sum}\limits_{\gamma \in \Gamma}\pi(\gamma)l \in \V^{\Gamma}.
$$

If the decomposition of $\Gamma$ into right cosets over $\Gamma_{\sigma^{-1}}$ is $\Gamma = \bigcup \Gamma_{\sigma^{-1}}r_j$, then $W^{\Gamma}l$ is further equal to
$$
\mathop{\sum}\limits_{j}\mathop{\sum}\limits_{\gamma \in \Gamma_{\sigma^{-1}}}\pi(\gamma)\pi(r_j)l.
$$

Then applying $\overline{\pi}^{\rm p}(\sigma)$, we obtain
$$
\mathop{\sum}\limits_{j}\mathop{\sum}\limits_{\gamma_1 \in \Gamma_{\sigma}}\pi(\gamma_1)\pi(\sigma r_j)l.
$$

Projecting on $H^{\Gamma}$, this gives
$$\frac{1}{[\Gamma:\Gamma_\sigma]}
\mathop{\sum}\limits_{j}\mathop{\sum}\limits_{\gamma \in \Gamma}\pi(\gamma)\pi(\sigma r_j)l,
$$
\noindent and this is equal to
$$
\frac{1}{[\Gamma:\Gamma_\sigma]}\mathop{\sum}\limits_{\theta \in \Gamma\sigma\Gamma}\pi(\theta)l.
$$

On the other hand the sum
$$
\mathop{\sum}\limits_{\theta \in \Gamma\sigma\Gamma}P_{L}\pi(\theta)P_L
$$
\noindent applied to the vector $l$, gives
$$
\mathop{\sum}\limits_{\theta \in \Gamma\sigma\Gamma}P_L\pi(\theta)l = \mathop{\sum}\limits_{j}\mathop{\sum}\limits_{\gamma \in \Gamma}P_L\pi(\gamma)\pi(\sigma r_j)l.
$$

We apply the isometry $W^\Gamma$ to this vector.  We use the identifications  assumed in the structure of the space $H^{\Gamma}$ (see formula (\ref{ident}) in the statement of  Definition \ref{Lspacespart2}). Then the  above sum corresponds to the vector
$$
\mathop{\sum}\limits_{r_j}\mathop{\sum}\limits_{\gamma \in \Gamma}\pi(\gamma\sigma r_j)l = \mathop{\sum}\limits_{\theta \in \Gamma\sigma\Gamma}\pi(\theta)l.
$$
\end{proof}

We describe below the straightforward inclusions between spaces of vectors invariant to subgroups in $\S$.

\begin{lemma}\label{heckepart2bis}
Consider the   family of Hilbert spaces
$l^2(\Gamma_0 \setminus \Gamma)$, $  \Gamma_0\in \S$.
On this family of Hilbert spaces, the scalar product is normalized so that the embeddings $l^2(\Gamma_0 \setminus \Gamma) \subseteq l^2(\Gamma_1 \setminus \Gamma)$ are isometric for all $\Gamma_0 \subseteq \Gamma_1$.
Then
\item(i)
 The Hilbert spaces $H^{\Gamma_0}$ are isometrically isomorphic to $l^2(\Gamma_0 \setminus \Gamma) \otimes L$, for all $\Gamma_0\in \S$,
 with the inclusion $$H^{\Gamma_0} \subseteq H^{\Gamma_1}$$ obtained for
$\Gamma_1 \subseteq \Gamma_0$ by tensoring with $L$, the isometric inclusion
$$
l^2(\Gamma_0 \setminus \Gamma) \subseteq l^2(\Gamma_1 \setminus \Gamma).$$
\item (ii) Let $\pi$ be a representation with properties (i), (ii) in Definition \ref {TC}. Then, the Hilbert space  $\overline{H} ^{\rm p}$ is $\Gamma$-equivariantly isometrical isomorphic to $L^2(K, \mu) \otimes L $. Hence
 $\overline{\pi}^{\rm p} |_{K}$ is a multiple of the left regular representation.

\end{lemma}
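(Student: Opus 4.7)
The plan is to combine Theorem \ref{hecke} with the standard identification of $L^2(K,\mu)$ as the Hilbert inductive limit of the spaces $\ell^2(\Gamma_0\setminus\Gamma)$ as $\Gamma_0$ ranges over $\S$, with inner products normalized so that the natural inclusions become isometric. The bulk of the work is simply to match the renormalization imposed on $L^{\Gamma_0}$ by formula (\ref{renorm}) with the corresponding normalization of $\ell^2(\Gamma_0\setminus\Gamma)$ and of the Haar measure $\mu$ on $K$.

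For part (i), I would combine the isometry $W^{\Gamma_0}\colon L^{\Gamma_0}\to H^{\Gamma_0}$ built in the proof of Theorem \ref{hecke} with the tautological unitary $L^{\Gamma_0}=\bigoplus_{i=1}^{[\Gamma:\Gamma_0]}\pi(s_i)L \cong \ell^2(\Gamma_0\setminus\Gamma)\otimes L$ sending $\pi(s_i)l\mapsto e_{\Gamma_0 s_i}\otimes l$; this yields the required isomorphism once one fixes the normalization $\|e_{\Gamma_0 s}\|^2=1/[\Gamma:\Gamma_0]$. This is exactly the normalization under which the map $e_{\Gamma_0 s}\mapsto \sum_k e_{\Gamma_1 r_k s}$ (for $\Gamma_0=\bigsqcup_k \Gamma_1 r_k$, $\Gamma_1\subseteq\Gamma_0$) becomes isometric, since the target has squared norm $[\Gamma_0:\Gamma_1]/[\Gamma:\Gamma_1]=1/[\Gamma:\Gamma_0]$. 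To check that these inclusions correspond to those on the $H^{\Gamma_0}$, I would trace through the construction in the proof of Proposition \ref{Lspaces}: the vector $\eta=\sum_{\gamma_0\in\Gamma_0}\pi(\gamma_0)l_0$ with $l_0=\pi(s_i)l\in L^{\Gamma_0}$ is identified with $\sum_{\gamma_1\in\Gamma_1}\pi(\gamma_1)\sum_k \pi(r_k s_i)l\in H^{\Gamma_1}$, whose $L^{\Gamma_1}$-representative is $\sum_k \pi(r_k s_i)l$, exactly the image of $\pi(s_i)l$ under $\mathrm{Id}_L$ tensored with the inclusion $\ell^2(\Gamma_0\setminus\Gamma)\hookrightarrow\ell^2(\Gamma_1\setminus\Gamma)$.

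For part (ii), since Hilbert tensor product with the fixed factor $L$ commutes with inductive limits,
\[
\overline{H}^{\rm p} = \overline{\bigvee_{\Gamma_0\in\S} H^{\Gamma_0}} \;\cong\; \Bigl(\overline{\bigvee_{\Gamma_0\in\S}\ell^2(\Gamma_0\setminus\Gamma)}\Bigr)\otimes L.
\]
Identifying $e_{\Gamma_0 s}$ with $\chi_{K_0 s}\in L^2(K,\mu)$, where $K_0=\overline{\Gamma_0}\subseteq K$, one has $\|\chi_{K_0 s}\|^2_{L^2(K,\mu)}=\mu(K_0)=1/[\Gamma:\Gamma_0]$, matching the normalization above; since $\bigcap_{\Gamma_0\in\S}\Gamma_0=\{e\}$ by the standing hypothesis on $\S$, the span of all such $\chi_{K_0 s}$ is dense in $L^2(K,\mu)$. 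Hence the first factor of the right-hand side is $L^2(K,\mu)$, so $\overline{H}^{\rm p}\cong L^2(K,\mu)\otimes L$. Under this identification, the restriction of $\overline{\pi}^{\rm p}$ to $\Gamma$ acts on the first tensor factor as the restriction of $\lambda_K$ to $\Gamma\subseteq K$: this is immediate from $\pi|_\Gamma\cong \lambda_\Gamma\otimes\mathrm{Id}_L$ in Definition \ref{TC}(i) together with the compatibility of the identification $H^{\Gamma_0}\cong \ell^2(\Gamma_0\setminus\Gamma)\otimes L$ with the $\Gamma$-action. By continuity and the density of $\Gamma$ in $K$, the extension $\overline{\pi}^{\rm p}|_K$ equals $\lambda_K\otimes\mathrm{Id}_L$, a $(\dim L)$-fold multiple of $\lambda_K$.

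The only delicate step is the simultaneous bookkeeping of the three normalizations (on $L^{\Gamma_0}$, on $\ell^2(\Gamma_0\setminus\Gamma)$, and on $L^2(K,\mu)$); conceptually the content is merely the elementary identification of $L^2$ of a profinite group with the Hilbert direct limit of the associated finite-dimensional $\ell^2$-spaces.
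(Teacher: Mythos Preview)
Your argument is correct and coincides with the paper's approach: the paper states this lemma without a separate proof, treating it as an immediate consequence of the isometries $W^{\Gamma_0}$ constructed in Theorem~\ref{hecke} together with the inclusions established in Proposition~\ref{Lspaces}, and it revisits the $K$-equivariance in the proof of Lemma~\ref{compact} using exactly the intertwiners $W^{\Gamma_0}$ you invoke. Your explicit normalization bookkeeping and the identification $e_{\Gamma_0 s}\leftrightarrow \chi_{K_0 s}$ simply spell out what the paper leaves implicit.
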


Using the above identification, we construct a block matrix representation for the Hecke operators.

 \begin{cor}\label{heckepart2} We use the notations introduced in Lemma \ref{heckepart2bis}.
 Denote the canonical matrix unit of $B(l^2(\Gamma_0 \setminus \Gamma))$ by
$$
(e_{\Gamma_0s_i, \Gamma_0s_j})_{i, j = 1, 2, \ldots, [\Gamma : \Gamma_0]}.
$$

\noindent  We use the isomorphism defined  above
$$
B(H^{\Gamma_0}) \cong B(l^2(\Gamma_0 \setminus \Gamma)) \otimes B(L).
$$

\noindent Then, the Hecke operator ${ [\Gamma_0:(\Gamma_0)_\sigma]}P_{H^{\Gamma_0}}\overline{\pi}^{\rm p}(\sigma)P_{H^{\Gamma_0}}$ is represented as
\begin{equation}\label{heckematrix}
\mathop{\sum}\limits_{i, j}\mathop{\sum}\limits_{\theta \in s_i^{-1}\Gamma_0\sigma\Gamma_0s_j}P_L\pi(\theta)P_L \otimes e_{\Gamma_0s_i, \Gamma_0s_j}.
\end{equation}

\end{cor}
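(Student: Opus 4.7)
The plan is to derive the block matrix formula directly from Theorem \ref{hecke}, by expanding the projection $P_{L^{\Gamma_0}}$ in the formula (\ref{agamma}) for $A(\Gamma_0\sigma\Gamma_0)$ in terms of the orthogonal summands $\pi(s_i)L$. Since Theorem \ref{hecke} already asserts that ${[\Gamma_0:(\Gamma_0)_\sigma]}P_{H^{\Gamma_0}}\overline{\pi}^{\rm p}(\sigma)P_{H^{\Gamma_0}}$ is unitarily equivalent (via $W^{\Gamma_0}$) to $A(\Gamma_0\sigma\Gamma_0)$ acting on $L^{\Gamma_0}$, the problem reduces to computing the matrix of this latter operator with respect to the direct sum decomposition $L^{\Gamma_0}=\bigoplus_i \pi(s_i)L$.

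First I would record that, because $L$ is $\Gamma$-wandering, the unitary $U_i\colon L\to \pi(s_i)L$ given by $U_i(l)=\pi(s_i)l$ is an isometric isomorphism, and the orthogonal projection onto $\pi(s_i)L$ satisfies $P_{\pi(s_i)L}=\pi(s_i)P_L\pi(s_i)^{-1}$. Hence, with the identification
\begin{equation*}
L^{\Gamma_0}\;\cong\;\ell^2(\Gamma_0\setminus\Gamma)\otimes L,\qquad \pi(s_i)l\;\longleftrightarrow\; e_{\Gamma_0 s_i}\otimes l,
\end{equation*}
an operator $A$ on $L^{\Gamma_0}$ is represented by the block matrix $\sum_{i,j} A_{ij}\otimes e_{\Gamma_0 s_i,\Gamma_0 s_j}$, where $A_{ij}\in B(L)$ is the operator $U_i^{-1}P_{\pi(s_i)L}AU_j$.

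Next I would apply this to $A=A(\Gamma_0\sigma\Gamma_0)=\sum_{\theta\in\Gamma_0\sigma\Gamma_0}P_{L^{\Gamma_0}}\pi(\theta)P_{L^{\Gamma_0}}$. For fixed $i,j$, each summand contributes
\begin{equation*}
U_i^{-1}P_{\pi(s_i)L}\pi(\theta)P_{\pi(s_j)L}U_j \;=\; P_L\pi(s_i^{-1})\,\pi(\theta)\,\pi(s_j)P_L \;=\; P_L\pi(s_i^{-1}\theta s_j)P_L,
\end{equation*}
using $P_{\pi(s_i)L}=\pi(s_i)P_L\pi(s_i^{-1})$ and $P_{\pi(s_j)L}=\pi(s_j)P_L\pi(s_j^{-1})$. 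The change of summation variable $\eta=s_i^{-1}\theta s_j$ then converts $\sum_{\theta\in\Gamma_0\sigma\Gamma_0}$ into $\sum_{\eta\in s_i^{-1}\Gamma_0\sigma\Gamma_0 s_j}$, yielding the announced $(i,j)$-block in formula (\ref{heckematrix}). Summing over $i,j$ reconstructs the full operator, and the unitary equivalence established in Theorem \ref{hecke} transfers the result to the Hecke operator $[\Gamma_0:(\Gamma_0)_\sigma]P_{H^{\Gamma_0}}\overline{\pi}^{\rm p}(\sigma)P_{H^{\Gamma_0}}$.

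The only subtlety — and the main point requiring care — is the bookkeeping of normalizations: one must verify that the renormalization of the inner product on $L^{\Gamma_0}$ introduced in formula (\ref{renorm}) matches exactly the normalization of the scalar product on $\ell^2(\Gamma_0\setminus\Gamma)$ specified in Lemma \ref{heckepart2bis} (so that the embeddings $\ell^2(\Gamma_1\setminus\Gamma)\subseteq \ell^2(\Gamma_0\setminus\Gamma)$ for $\Gamma_1\subseteq\Gamma_0$ are isometric and compatible with the isometric inclusions $H^{\Gamma_0}\subseteq H^{\Gamma_1}$ from Lemma \ref{propertiespi}). Once this matching is checked, the block matrix entries above are manifestly the correct coefficients for the tensor factorization $B(\ell^2(\Gamma_0\setminus\Gamma))\otimes B(L)$, and the convergence of each block entry in the strong operator topology is inherited from part (i) of Lemma \ref{Lspacespart1}. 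This completes the reduction to Theorem \ref{hecke}.
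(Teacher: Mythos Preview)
Your argument is correct and follows essentially the same route as the paper: both proofs start from Theorem~\ref{hecke}, write $P_{L^{\Gamma_0}}=\sum_i \pi(s_i)P_L\pi(s_i)^{-1}$, and conjugate $A(\Gamma_0\sigma\Gamma_0)$ by the unitary $L^{\Gamma_0}\cong \ell^2(\Gamma_0\backslash\Gamma)\otimes L$ sending $\pi(s_i)l\mapsto e_{\Gamma_0 s_i}\otimes l$. Your version spells out the block computation and the change of variable $\eta=s_i^{-1}\theta s_j$ more explicitly than the paper does; note only the minor slip that for $\Gamma_1\subseteq\Gamma_0$ the isometric inclusion goes $\ell^2(\Gamma_0\backslash\Gamma)\subseteq\ell^2(\Gamma_1\backslash\Gamma)$, not the reverse.
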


\begin{proof}
The statement  is  a consequence of the fact that the projection onto $L^{\Gamma_0} = \oplus \pi(s_i)L$ is $\mathop{\sum}\pi(s_i)P_L\pi(s_i)^{\ast}$, where $s_i$ are the coset representatives introduced at the beginning of the proof.
We use the coset  representatives to construct a unitary operator, mapping $L^{\Gamma_0}$ onto $\ell^2(\Gamma_0)\otimes L$. This will map 
$\oplus \pi(s_i) l_i$ onto $\oplus [\Gamma_0 s_i]\otimes l_i$, for all $l_i\in L$.  Conjugating the operator in formula (\ref{agamma})
by this unitary, we obtain the formula (\ref{heckematrix})  in the statement.

No further renormalization in formula (\ref{tr}) is needed, as
it can be directly checked by letting $\sigma$ be the identity element in $G$. Then,
for $\Gamma_0\in \S$, the left hand side of the equation is ${\rm dim}\ H^{\Gamma_0}$. Since $L$ is a $\Gamma$-wandering subspace of $H$, the right hand side counts how many times the identity element belongs to $s_i^{-1}\Gamma_0s_i$ and multiplies the result by the dimension of the space $L$.


\end{proof}

\vskip10pt

\begin{rem}
In particular, if $L$ is of finite dimension, then we have the following formula for the traces of the Hecke operators:
\begin{equation}\label{tr}
{\rm Tr}(
{ [\Gamma_0:(\Gamma_0)_\sigma]}\big[
P_{H^{\Gamma_0}}\overline{\pi}(\sigma)P_{H^{\Gamma_0}}\big]) = \mathop{\sum}\limits_{i}\mathop{\sum}\limits_{\theta \in s_i^{-1}\Gamma_0\sigma\Gamma_0s_i}{\rm Tr}(P_L\pi(\theta)P_L).
\end{equation}
\end{rem}

We note that the correspondence $\pi\rightarrow\overline{\pi}^{\rm p}$ also
preserves the dimension function $\dim_{\{\pi(\Gamma)\}''}H$. This is explained in the following result:
\vskip10pt

\begin{lemma}\label{compact} Let $\pi$ be a unitary representation as above.
 Then
 \item(i)
The unitary representation $ \overline{\pi}^{\rm p}$ extends to a  $C^\ast$-algebra representation of the full amalgamated free product $C^\ast$-algebra $C^\ast(\overline G) \ast_{{\rm C}^\ast(K)} B(L^2(K))$ into $B(\overline {H}^{\rm p})$.

\item(ii)
The unitary representation  $\overline{\pi}^{\rm p}$  of $\overline{G}$ has the property that $\overline{\pi}_0^{\rm p} |_{K}$ is  a finite multiple of the left regular representation of $K$ on $L^2(K, \mu)$.

\end{lemma}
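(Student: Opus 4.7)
The plan is to establish part (ii) first, and then to obtain part (i) as a formal consequence, via the universal property of the full amalgamated free product.

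For (ii), I would start from Lemma \ref{heckepart2bis}(ii), which gives a $\Gamma$-equivariant isometric isomorphism $\overline{H}^{\rm p} \cong L^2(K,\mu) \otimes L$, where $\Gamma$ acts on the first tensor factor through the restriction of the left regular representation $\lambda_K$ of $K$. The task is to upgrade this $\Gamma$-equivariance to $K$-equivariance, and here the essential ingredient is strong continuity: by Proposition \ref{cala}, $\overline{\pi}^{\rm p}$ extends to a $C^\ast$-representation of $\A_\varepsilon(G,\overline G)$, and in particular to $C^\ast(\overline G)$, so it induces a strongly continuous unitary representation of the compact group $K\subseteq \overline G$. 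Both $\overline{\pi}^{\rm p}|_K$ and $\lambda_K\otimes {\rm Id}_L$ are strongly continuous representations of $K$ on $L^2(K,\mu)\otimes L$, and they coincide on the dense subgroup $\Gamma$, so they must agree on all of $K$. This yields $\overline{\pi}^{\rm p}|_K \cong \lambda_K \otimes {\rm Id}_L$, a $\dim L = d_\pi$ multiple of the left regular representation of $K$.

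For (i), I would use the identification from (ii) to define a second $\ast$-representation
\[
\rho: B(L^2(K,\mu)) \to B(\overline H^{\rm p}),\qquad \rho(T) = T \otimes {\rm Id}_L,
\]
via the isomorphism $\overline H^{\rm p} \cong L^2(K,\mu)\otimes L$. Together with $\overline{\pi}^{\rm p}: C^\ast(\overline G)\to B(\overline H^{\rm p})$, this gives two $C^\ast$-representations into $B(\overline H^{\rm p})$. On the common subalgebra $C^\ast(K)$, canonically included in $C^\ast(\overline G)$ as convolvers supported in the clopen compact subgroup $K$, and canonically mapped into $B(L^2(K,\mu))$ by $\lambda_K$, both compositions factor through $\lambda_K\otimes {\rm Id}_L$ by part (ii). By the universal property of the full amalgamated free product, the pair $(\overline{\pi}^{\rm p},\rho)$ therefore extends uniquely to a $\ast$-representation of $C^\ast(\overline G)\ast_{C^\ast(K)} B(L^2(K))$ on $\overline H^{\rm p}$.

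The main technical obstacle is the strong-continuity step that glues the $\Gamma$-action into a genuine $K$-action. Concretely, one must check that as $\Gamma_0$ shrinks over $\S$, the projections $P_{H^{\Gamma_0}}$, which by Proposition \ref{cala} are identified with $\mu(K_0)^{-1}\overline{\pi}^{\rm p}(\chi_{K_0})$ for $K_0=\overline{\Gamma_0}$, behave as an approximate identity compatible with the Haar-continuity of $\lambda_K$. On each $H^{\Gamma_0}\cong \ell^2(\Gamma_0\setminus\Gamma)\otimes L$ the action of $\Gamma$ is by explicit permutation of cosets tensored with ${\rm Id}_L$, which matches the restriction of $\lambda_K\otimes {\rm Id}_L$; once strong continuity over $K$ is in place, the density of $\Gamma$ in $K$ propagates this agreement to all of $K$, and part (i) follows as a routine application of the universal property of the amalgamated free product.
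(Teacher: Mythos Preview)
Your proposal is correct and reaches the same conclusions, but your route is genuinely different from the paper's, and in two respects.

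First, the order is reversed. The paper proves (i) directly and independently of (ii): it observes that $B(L^2(K))$ is generated by the convolution algebra $C^\ast(K)$ together with the multiplication operators $C(K)$, and defines the action of a multiplication operator $\chi_{K_0}$ on a formal sum $\sum_{\gamma_0\in\Gamma_1}\pi(\gamma_0)l$ by truncating the sum to $\Gamma_0\cap\Gamma_1$. The compatibility relations defining the amalgamated free product are then checked by hand. For (ii), the paper simply observes that the unitaries $W^{\Gamma_0}$ constructed in the proof of Theorem~\ref{hecke} are, by inspection, intertwiners between $\overline{\pi}^{\rm p}|_K$ and $\lambda_K\otimes{\rm Id}_L$.

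Your approach is cleaner for (i): once (ii) is established, the representation of $B(L^2(K))$ is forced to be $T\mapsto T\otimes{\rm Id}_L$ under the tensor identification, and the amalgamation condition over $C^\ast(K)$ is automatic, so the universal property does all the work. The paper's approach, by contrast, gives an explicit formula for the action of $C(K)$ on the formal-sum model of $\overline{H}^{\rm p}$, which is useful if one wants to compute with these operators later. For (ii), your density-and-continuity argument is perfectly valid --- strong continuity of $\overline{\pi}^{\rm p}$ on $K$ follows from Proposition~\ref{cala}, and $\lambda_K\otimes{\rm Id}_L$ is obviously strongly continuous --- but note that you are leaning on Lemma~\ref{heckepart2bis}(ii), whose statement already asserts the conclusion of (ii); the paper's one-line proof via the $W^{\Gamma_0}$ is what actually underpins both that lemma and the present one.
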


\vskip10pt

\begin{proof}
To prove (i), we have to construct the representation of $B(L^2(K))$ into $B(\overline {H}^{\rm p})$
Since $B(L^2(K))$ is generated by the algebra of convolutors with continuous functions on $K$ and by the functions in $C(K)$, viewed as multiplication operators, it is sufficient to describe the representation of the second algebra into $B(L^2(K))$, that is compatible with the previous one. By translation invariance, it is sufficient to consider the following case: Let $K_0$ be the closure, in the profinite topology, of a subgroup $\Gamma_0\in \S$. Then $\chi_{K_0}$, viewed as an element of the latter algebra, will act on a vector of the form $ \mathop{\sum}\limits_{\gamma_0 \in \Gamma_1}\pi(\gamma_0)l$, $\Gamma_1\in \S, l\in L$, by mapping it into
$ \mathop{\sum}\limits_{\gamma_0 \in \Gamma_0\cap \Gamma_1}\pi(\gamma_0)l$.  It is easy to check that this defines a $C^\ast$-algebra representation of the $C^\ast$-algebra $C^\ast(\overline G) \ast_{{\rm C}^\ast(K)} B(L^2(K))$.

To prove (ii), we note that  by restricting $\overline{\pi}^{\rm p} $ to $K$, the operators $W^{\Gamma_0}$, $\Gamma_0 \in \S$ become intertwiners between $\overline{\pi |_K}$ and the representation $\lambda_K\otimes {\rm Id}_L$ acting on $L^2(K) \otimes L.$


\end{proof}

\vskip10pt

\section{Examples of representations and associated spaces of $\Gamma$-invariant vectors}\label{theexamples}

In this section we discuss concrete examples  which  verify the axioms introduced in Definition \ref{formalism}, and the construction  in Proposition \ref{Lspaces} may be  applied.
We will freely use in this section the notations from the above mentioned definition, and the successive statements.

The following example plays, for unitary representations on spaces of $\Gamma$-invariant vectors, the role that the left regular representation of a discrete group plays in the space of representations of a discrete group.  It would be tempting to address this representation with the terminology "regular representation".

\begin{ex}\label{regular}Let $\pi=\lambda_G$ be the left regular representation of   $G$ acting on $H = l^2(G)$. In this case, the vector space  $\V$ is the linear space of functions on $G$. For $\Gamma_0 \in \S$, $\V^{\Gamma_0}$ is the space of left $\Gamma_0$ invariant functions on $G$, that is functions on $\Gamma_0\backslash G$.
Then the Hilbert space
$H^{\Gamma_0}$ is
 $l^2(\Gamma_0 \setminus G)$. The scalar product is defined so that, if $\Gamma_1\subseteq \Gamma_0$, $\Gamma_0,\Gamma_1\in \S$, the inclusions 
 $$l^2(\Gamma_0 \setminus G)\subseteq l^2(\Gamma_1 \setminus G), $$ are isometric.

  Clearly $\V_{\infty}$ is in this case the Hilbert space completion of the space $$\mathop{\bigvee}\limits_{\Gamma_0 \in \S}l^2(\Gamma_0 \setminus G).$$

\noindent This  is    $L^2(\overline{G}, \mu).$
Then, the representation $\overline{\pi}^{\rm p}$ is  the left regular representation $\lambda_{\overline{G}}$ acting on on $L^2(\overline{G}, \mu)$.

Note that here we implicitly use an identification on the vector spaces having as a basis the set  of cosets. This is the following: if $\Gamma_1 \subseteq \Gamma_0$ and $\Gamma_0 = \bigcup\Gamma_1s_i$, then, as vectors
in $l^2(\Gamma_0 \setminus G)\subseteq l^2(\Gamma_1 \setminus G)$, we have
\begin{equation}\nonumber
[\Gamma_0 g] = \mathop{\sum}\limits_{i}[\Gamma_1 s_i g], \quad  g \in G.
\end{equation}
In particular,  all quasi-regular representations $\lambda_{G/\Gamma_0}$ of $G$ onto spaces of cosets are subrepresentations of $\lambda_{\overline{G}} |_{G}$. Indeed, by using the equality in the above formula,
  we obtain that for all $\Gamma_1 \in \S$, $$l^2(G/\Gamma_1) \subseteq \mathop{\bigvee}\limits_{\Gamma_0 \in \S}l^2(\Gamma_0 \setminus G).$$
The quasi-regular representations   occur   with infinite multiplicity in the left regular representation $\lambda_{\overline{G}} |_{G}$, as they commute with the right action of $G$.

A standard choice of a $\Gamma$-wandering, generating subspace of $l^2(G)$, will consists into a choice $\cC \subseteq G$ of right coset representatives of $\Gamma$ in $G$ (thus $G$ would be the disjoint union $\mathop{\bigcup}\limits_{\sigma \in \cC}\Gamma{\sigma}$).
Since $P_{H^{\Gamma_0}}$ is the projection onto $l^2(\Gamma_0 \setminus G)$, we obtain,
using the above construction,  the standard representation of the Hecke operators and Hecke algebras (\cite{BC}, \cite{Bi}, \cite{Hal}, \cite{Tz}, \cite{LLN}).



\vskip10pt

\end{ex}

We describe a second standard example of the construction in Proposition \ref{Lspaces} corresponding to the case dim$_{\{\pi(\Gamma)\}''}H=\infty$.

\vskip10pt

\begin{ex}\label{koop} Assume that $(\cX, \nu)$ is an infinite  measure space such that $G$ acts by measure preserving transformations.
We assume that the restriction of the action of $G$ to $\Gamma$admits a fundamental domain $F$ in $\cX$, with measure $\nu(F) = 1$. For every $\Gamma_0$ in $\S$,  fix a system of representatives of cosets $$\Gamma = \bigcup \Gamma_0 s_i.$$ Let $$F_{\Gamma_0} = \bigcup s_i F.$$ Then $F_{\Gamma_0}$ is a fundamental domain for $\Gamma_0$. We renormalize the measure $\nu$ on $F_{\Gamma_0}$, and consider
 $$\nu_{\Gamma_0} = \dfrac{1}{[\Gamma : \Gamma_0]}\nu.$$
\noindent The choice of representatives induces a  projection $\pi_{\Gamma_0} : F_{\Gamma_0} \to F$, which simply maps $s_if$ into $f$, for $f$ in $F$. Taking the adjoint we obtain an isometric inclusion $$L^2(F, \nu) \subseteq L^2(F_{\Gamma_0}, \nu_{\Gamma_0}).$$

The unitary representation of $G$ on $L^2(\cX, \nu)$ is simply the Koopman representation
$$
\pi_{\rm Koop}(g)f(x) = f(g^{-1}x), \quad x \in \cX, \; g \in G, \; f \in L^2(\cX, \nu).
$$

We  use the formalism in Definition $\ref{formalism}$, and let $\V$ be the linear space of measurable functions on $\cX$. The subspace $\V^{\Gamma_0}$ 
clearly consists of functions in $\V$ that are $\Gamma_0$- equivariant.  Then $H^{\Gamma_0}$ is canonically identified to  the Hilbert space $L^2(F_{\Gamma_0}, \nu_{\Gamma_0})$. This space is also  identified with a subspace of the $\Gamma_0$-invariant functions on $\cX$.

It is clear that in this case the Hilbert space $\overline{H}^{\rm p}$ is isometrically isomorphic to
 $$L^2(K, \mu) \otimes L^2(F,\nu) =L^2(K\times F, \mu\times\nu).$$
   The representation $\overline{\pi}^{\rm p} |_{K}$ is simply ${\rm Id}_{L^2(F)} \otimes \lambda_K$, where $\lambda_K$ is the left regular representation of $K$ on $L^2(K, \mu)$.

    The above construction also proves that the representation $\overline{\pi_{\rm Koop}}^{\rm p}|_G$ is a Koopman unitary representation itself. It is easily recovered from the initial representation $\pi$. Indeed, taking the counting measure $\varepsilon$ on $\Gamma$, one has an isomorphism of measure spaces $$(\cX, \nu)\cong(\Gamma, \varepsilon)\times (F, \nu).$$
The action of $G$ on $\cX$, in the above identification is described in terms of a cocycle on $G\times F$ with values in $\Gamma$, where $\Gamma$ acts by left multiplication on the factor $\Gamma$ in the product $\Gamma\times F$.
When replacing the factor   $\Gamma$ in the above product, by the factor  $K$ in the product $K \times F$, we obtain a measure preserving action of $G$ on  the measure space $(K\times F, \mu\times\nu)$, having the same cocycle as the action of $G$ on $\Gamma\times F$.  Then, the unitary represention $\overline{\pi_{\rm Koop}}^{\rm p}|_G$ is in fact the unitary Koopmann representation corresponding to the action of $G$ on $(K\times F, \mu\times\nu)$

In the above construction, the projection  $P_L$ is  the multiplication operator by the characteristic function of $\chi_F$.
The convergence condition requiring that $$\mathop{\sum}\limits_{\theta \in \Gamma\sigma\Gamma}P_L \pi_{\rm Koop}(\theta)P_L,$$ be so-convergent is obvious in this case, since the above sum is the Hecke operator (see e.g. \cite{Ra5}).

\end{ex}

\vskip10pt

We also briefly describe below  how    the   framework from Proposition \ref{Lspaces} may be used    for spaces of  automorphic forms. This will be also analyzed in detail in the next section, in a more general setting.

\begin{ex}\label{auto}
Consider   the measures $\nu_n=({\rm Im} z)^{n-2}d\bar{z}dz$ on $\mathbb H$.
 Let $\pi_n, n>1, n\in \mathbb N$ be  the discrete series (\cite{la}) of (projective) unitary representations of  $\PSL(2,\R)$. This representations act on the Hilbert space $H_n = H^2(\bH, \nu_n)$.
 Their formula is similar to the Koopmann unitary representation, except for a modularity factor (see e. g. \cite{la}).  We also consider the larger Hilbert space $\wideparen{H}_n = L^2(\bH, \nu_n),n\geq 1$.

  We let $G = \PGL(2,\Z [\frac{1}{p}])$, where $p$ is a prime, and let $\Gamma = \PSL(2,\Z)$. If $\pi_n$ is a projective unitary representation, i.e. if $n$ is odd, then we are also given a two cocycle (expressing the projectivity of $\pi_n$). This cocycle is  in this case  $\mathbb Z_2$-valued, and hence it extends to a 2-cocycle on the Schlichtling completion.
    In this case this completion  is $PGL(2,\mathcal Q_p)$, where $\mathcal Q_p$ is the $p$-adic field.

By the results in \cite{GHJ}, Section 3.3, it follows  that $\pi_n |_{\Gamma}$ is a (not necessarily integer) multiple of the left regular representation $\lambda_{\Gamma}$. Indeed,
 $${\rm dim}_{\{\pi_n(\Gamma)\}''}H_n = \frac{n-1}{12}.$$ Consequently, if $\frac{n-1}{12}$ is not an integer, then there is no Hilbert space $L$ such that $H_n \cong l^2(\Gamma) \otimes L$ as $\Gamma$-modules.
Moreover, even if $\frac{n-1}{12}$ is an integer, there is no canonical choice of $L$, which would allow to proceed as in Proposition \ref{Lspaces}.

To overcome this problem we use a $\Gamma$-wandering, generating subspace of a representation $\wideparen{\pi}_n$  that contains $\pi_n$ as a subrepresentation.
We let $\wideparen{\pi}_n$ be the unitary representation of $\PSL(2,\R)$, given by the same algebraic formula on functions on $\mathbb H$, as the algebraic formula that determines the representation $\pi_n$.
%
The same computation that shows that $\pi_n$ is a unitary representation also plainly proves that $\wideparen{\pi}_n$ is a unitary representation.

It is well known (e.g. \cite {He}, \cite {Pe}) that the associated Hilbert space $H_n^{\Gamma}$ is the finite dimensional Hilbert space consisting of automorphic forms, of weight $n$,  for the group $\Gamma = \PSL(2,\Z)$.
To apply the formalism  in Definition $\ref{formalism}$, we let $\V$  be the space of analytic functions in $\bH$. 
In the next section we will use this framework to compute traces of Hecke operators.

Then, to describe the scalar product on $H_n^{\Gamma}$, one uses the Hilbert space scalar product from the previous example for the unitary representation $\wideparen{\pi}_n$.   If $F$ is a fundamental domain for the action of $\Gamma$ on $\mathbb H$, then we let $P_L$ be the multiplication operator $M_{\chi_F}$ with the characteristic function $\chi_F$, acting on $L^2(\bH, \nu_n)$.
The scalar product is explicitly constructed in formula (\ref{genpet}) in the next section.
In the particular case treated in this example, the above mentioned formula for the scalar product  turns out to be the canonical   Petersson (\cite{Pe}) scalar product.
The procedure described above will  be later used    to obtain an explicit description of the Hecke operators and to compute their traces.

\end{ex}

\

\

We give one more example of when  the  framework in Theorem \ref{Lspaces} applies   for the construction of $\Gamma$-invariant vectors. This example  corresponds to representations of the form $\pi \otimes \pi^{\rm op}$, where $\pi^{\rm op}$ is the complex conjugate.

\begin {ex}\label{ad}
Let $\Gamma \subseteq G$, $\pi, \pi_0, P_0, P_L$ be as in Definition \ref{TC}. Consider the diagonal unitary representation $\tilde{\pi} = \pi_0 \otimes \pi_0^{\rm op}$ of $G$. In this example we assume, for the simplicity of the presentation, that all the subgroups in $\S$ have infinite, non-trivial conjugacy classes (briefly i.c.c). This corresponds to the fact that all the associated von Neumann algebras have unique traces (are factors, i. e. have no center).

   Note that even if $\pi_0$ is projective, the representation $\pi_0 \otimes \pi_0^{\rm op}$ is unitary, with no cocycle. Moreover, since in this case the Murray-von Neumann dimension is infinite, it follows that the unitary representation $\pi_0\otimes \pi_0^{\rm op}$ verifies the conditions from Theorem \ref {Lspaces}.  Since we reserved the notation
 $\overline {H_0}^{\rm p}$ for other purposes,  we use here the notation $H_0^{\rm op}$ for the conjugate Hilbert space of $H_0$.

 Then the representation  $\pi_0\otimes \pi_0^{\rm op}$ is unitarily equivalent to the representation ${\rm Ad\, } \pi_0$, defined on $G$, with values into the unitary group of the Hilbert space
 $$ H_0\otimes H_0^{\rm op}\cong \cC_2(H_0)\subseteq B(H_0). $$ Here $\cC_2(H_0)$ is the ideal   consisting of the Hilbert-Schmidt operators acting on $H_0$ ([Ta]). The formula for the representation
 ${\rm Ad\, }\pi_0$ is
 $${\rm Ad\, }\pi_0(g)(X) = \pi_0(g)X\pi_0(g)^{-1}, \quad X \in \cC_2(H_0), g\in G.$$
 It obviously extends to a (non unitary) representation of  $G$ into the (inner) automorphisms of $B(H_0)$

In the setting of  Definition \ref{formalism},  we let the space $\V$ be  $B(H_0)$.
Then
$$\V^{\Gamma}=\{\pi_0(\Gamma)\}'=\{X\ | \ [X, \pi(\gamma)]=0 {\rm\ for\ all\ } \gamma\in \Gamma\} \subseteq B(H_0).$$
\noindent  More generally, for $\Gamma_0 \in \S$ we have that
 $$\V^{\Gamma_0} = \{ \pi(\Gamma_0) \}' \subseteq B(H_0).$$
\noindent Since we  assumed that all the groups  $\Gamma_0$ in $\S$ are i.c.c. groups, it follows
that  the algebras $\{\pi_0(\Gamma_0)\}'$, $\Gamma_0 \in \S$, are type II$_1$ factors, and consequently  each of them is endowed with a unique   normalized trace $\tau_{\Gamma_0}$.

We let $\A_{\infty}$ be the type II$_1$ factor obtained as the inductive, trace preserving, directed limit of the factors $\{ \pi_0(\Gamma_0) \}'$, $\Gamma_0 \in \S$.  Then $\A_{\infty}$ has a unique  trace $\tau$ defined by the requirement that
 $$\tau|_{\{\pi_0(\Gamma_0)\}'} = \tau_{\Gamma_0},\quad\Gamma_0 \in S.$$

  For $\sigma \in G$, $\Gamma_0\in\S$,   ${\rm Ad\, }\pi_0(\sigma)$ maps $$\{ \pi_0(\Gamma_0\cap \Gamma_{\sigma^{-1}} )\}'$$
 \noindent into
   $$\{ \pi_0(\sigma\Gamma_0\sigma^{-1} \cap \Gamma_\sigma) \}'.$$
 \noindent It follows that
 ${\rm Ad\, }\pi_0(\sigma)$ also maps $\A_{\infty}$ onto $\A_{\infty}$.
Thus ${\rm Ad\, }\pi_0(\sigma)$, $\sigma \in G$, extends to an element in the automorphism group $\Aut(\A_{\infty})$ of the factor $A_\infty$.

To obtain the Hilbert space of $\Gamma_0$-invariant vectors, we use the standard $L^2$-spaces associated to the corresponding II$_1$ factors (for notations see e.g. \cite {Ta}). Thus $$(H_0\otimes H_0^{\rm op})^{\Gamma_0} = L^2(\{ \pi(\Gamma_0) \}', \tau_{\Gamma_0})$$
 \noindent and
$$\overline{(H_0\otimes H_0^{\rm op})}^{\rm p} = L^2(\A_{\infty}, \tau).$$
 \noindent In particular, if dim$_{\{ \pi(\Gamma) \}''}H_0=1$, then
 $$(H_0\otimes H_0^{\rm op})^{\Gamma}\cong L^2(\L(\Gamma),\tau)\cong\ell^2(\Gamma).$$

 The unitary representation ${\rm Ad\ }\pi(\sigma)$, $\sigma \in G$, induces consequently  the unitary representation   
 $$ \overline{{\rm Ad\ }\pi}^{\rm op}= \overline{\pi_0 \otimes \pi_0^{\rm op}} $$
corresponding to $\pi_0 \otimes \pi_0^{\rm op}$, as defined   in Theorem \ref{Lspaces}.

Although this is not needed in this paper, we note that by Jones's index theory (\cite{Jo}), by identifying  the Jones's projection for the inclusion
$$\{\pi_0(\Gamma_0)\}''\subseteq \{\pi_0(\Gamma)\}''$$
 \noindent with the characteristic function of the closure of the subgroup $\Gamma_0$ in $K$, it follows (see \cite{Ra4}) that
 $\A_\infty$ is isomorphic to the von Neumann algebra crossed product algebra
 $\mathcal L(\Gamma\rtimes L^\infty(K,\nu))$, where $\Gamma$ acts by left translations on $K$. The representation $\overline{{\rm Ad\ }\pi}^{\rm p} |_{\Gamma}$  acts identically on $\L(\Gamma) \subseteq \A_{\infty}$, and by right translations on $K$.

\end{ex}

\vskip20pt

\section{Construction of the representation $\overline{\pi_0}^{\rm p}$ in the absence of $\Gamma$-wandering, generating subspace}\label{pi0}

In this section we are analyzing the case of a unitary representation $\pi_0$ of $G$ on a Hilbert space $H_0$ such that   ${\rm dim}_{\{\pi_0(\Gamma)\}''}H_0$ is not necessary an integer. We assume that the above mentioned dimension is a finite, positive number. Thus there might be   no $\Gamma$-wandering, generating subspace $L \subseteq H_0$, as in Definition \ref{wandering}.


We assume  the conditions in Definition \ref{TC}. We recall that the assumptions in the above definition require  that  there exists a  unitary   representation $\pi$ of $G$, on a larger Hilbert space $H$ and having a $\Gamma$-wandering generating subspace $L$. The initial representation $\pi_0$ is required to be a subrepresentation of $\pi$.
We recall that, as in Definition \ref{TC}, by $P_L, P_0$ we denote, respectively  the orthogonal projection from $H$ onto $L$ and, respectively onto $H_0$.



The spaces of $\Gamma_0$-invariant vectors are constructed, as in Proposition \ref {Lspaces}, as spaces of formal  sums,
over the group $\Gamma_0\in \S$. The $\Gamma_0$-invariant vectors  are consequently identified, as in Section \ref{axioms}, with $\Gamma_0$-invariant, unbounded linear forms on the  Hilbert space $H_0$.

The use of an auxiliary representation $\pi$, that possesses a $\Gamma$-wandering, generating subspace, and that contains the original representation $\pi_0$ as a subrepresentation, is  suggested  by the case of automorphic forms (see the description in Example \ref{auto}). Formally automorphic forms are vectors fixed by  $\pi_n|_\Gamma$ acting on the Hilbert space $H_n$. It is impossible  to find genuine vectors with this property since square summability fails. On the other hand the algebraic formula defining the representation $\pi_n$ admits $\Gamma$-invariant, analytic functions. These are the automorphic forms (\cite{He}).  To construct a Hilbert space structure on the space of automorphic forms of a given weight one uses the Petersson scalar product. To define the Petersson scalar product (\cite{Pe}), one uses a fundamental domain $F$ for the action of $\Gamma$ on $\bH$.
The space $L^2(F, \nu_n)$ is a $\Gamma$-wandering generating subspace for the larger unitary representation, containing $\pi_n$ as a subrepresentation, and acting on $L^2(\bH, \nu_n)$.

In the framework of this section,  the unitary representation $\pi_0$ is $\pi_n$, and the larger representation, having $\Gamma$-wandering, generating subspace acts on $L^2(\bH, \nu_n)$.
The projection $P_L$ is the multiplication operator on $L^2(\bH, \nu_n)$ with the characteristic function $\chi_F$ of the fundamental domain. Also, the projection $P_0$ is the (Bergman) projection onto the space of analytic functions $H^2(\bH, \nu_n)$.

We describe below the variations from the  procedure from Proposition $\ref{Lspaces}$ and Definition $\ref{formalism}$, needed to address  the present situation.
We start first with the construction of the  space of $\Gamma_0$-invariant vectors.

\vskip10pt

\begin{lemma}\label{tracespart1} We consider the groups $\Gamma \subseteq G$, and consider a   representation $\pi_0$  of $G$ with the properties from Definition \ref{TC}. We use the notations introduced in the above definition.
%
%
%
%
%


For $\Gamma_0$ in $\S$, we fix a system of right  coset representatives $s_i$ for $\Gamma_0$ in $\Gamma$, so  that   $\Gamma = \bigcup \Gamma_0 s_i.$ Let $L^{\Gamma_0}$ be the Hilbert space introduced in formula (\ref{lgamma}) with the norm subject to the renormalization condition defined in formula (\ref{renorm}) from the statement of Theorem \ref{hecke}.
%
  Let $P_{L^{\Gamma_0}}$ be the orthogonal projection from $H$ onto
$L^{\Gamma_0}$.

Then, the formula
\begin{equation}\label{proj}
\P_{\Gamma_0, L} = \mathop{\sum}\limits_{\gamma \in \Gamma_0} P_{L^{\Gamma_0}}\pi_0(\gamma)P_{L^{\Gamma_0}}\in B(L^{\Gamma_0})
\end{equation}
defines a projection in $B(L^{\Gamma_0})$.

\end{lemma}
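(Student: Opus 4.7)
The plan is to express $\P_{\Gamma_0,L}$, restricted to $L^{\Gamma_0}$, as $\sum_{\gamma\in\Gamma_0} B_\gamma$, where $B_\gamma=P_{L^{\Gamma_0}}\pi_0(\gamma)P_{L^{\Gamma_0}}\big|_{L^{\Gamma_0}}$, and to derive that it is a projection from a convolution identity for the $B_\gamma$'s coming from $P_0^2=P_0$, together with a symmetry coming from $P_0^*=P_0$.

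First I would verify that $L^{\Gamma_0}$ is a $\Gamma_0$-wandering, generating subspace of $H$ for $\pi|_{\Gamma_0}$. For $\gamma_0\in\Gamma_0\setminus\{e\}$ the family $\{\gamma_0 s_i\}$ is another system of right coset representatives of $\Gamma_0$ in $\Gamma$, so $\pi(\gamma_0)L^{\Gamma_0}=\bigoplus_i\pi(\gamma_0 s_i)L$ is orthogonal to $L^{\Gamma_0}$ by the $\Gamma$-wandering property of $L$; generation follows from $\Gamma=\bigcup\Gamma_0 s_i$. Consequently
\begin{equation*}
\sum_{\alpha\in\Gamma_0}\pi(\alpha)P_{L^{\Gamma_0}}\pi(\alpha^{-1})=\mathrm{Id}_H\quad(\text{strongly}),
\end{equation*}
since this is the sum of the pairwise-orthogonal projections onto the $\pi(\alpha)L^{\Gamma_0}$. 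Sandwiching with $P_0$, which commutes with $\pi(G)$, yields the key identity
\begin{equation*}
\sum_{\alpha\in\Gamma_0}\pi_0(\alpha)P_{L^{\Gamma_0}}\pi_0(\alpha^{-1})=P_0.
\end{equation*}

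Setting $B_\gamma=P_{L^{\Gamma_0}}\pi_0(\gamma)P_{L^{\Gamma_0}}\big|_{L^{\Gamma_0}}\in B(L^{\Gamma_0})$, so that $\P_{\Gamma_0,L}\big|_{L^{\Gamma_0}}=\sum_{\gamma\in\Gamma_0}B_\gamma$, the symmetry $B_\gamma^*=B_{\gamma^{-1}}$ follows immediately from $\pi_0(\gamma)^*=\pi_0(\gamma^{-1})$. Using the homomorphism property $\pi_0(\alpha)\pi_0(\alpha^{-1}\gamma_0)=\pi_0(\gamma_0)$ and the key identity above,
\begin{equation*}
\sum_{\alpha\beta=\gamma_0}B_\alpha B_\beta = P_{L^{\Gamma_0}}\Big(\sum_{\alpha}\pi_0(\alpha)P_{L^{\Gamma_0}}\pi_0(\alpha^{-1})\Big)\pi_0(\gamma_0)P_{L^{\Gamma_0}}\big|_{L^{\Gamma_0}} = P_{L^{\Gamma_0}}\pi_0(\gamma_0)P_{L^{\Gamma_0}}\big|_{L^{\Gamma_0}}=B_{\gamma_0},
\end{equation*}
using $P_0\pi_0(\gamma_0)=\pi_0(\gamma_0)$ in the last step.

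The projection property of $\P_{\Gamma_0,L}$ now follows at once. Self-adjointness is immediate from $B_\gamma^*=B_{\gamma^{-1}}$ after relabeling the summation index. Idempotence is a double-sum rearrangement: $\P_{\Gamma_0,L}^2=\sum_{\alpha,\beta}B_\alpha B_\beta=\sum_{\gamma_0}\bigl(\sum_{\alpha\beta=\gamma_0}B_\alpha B_\beta\bigr)=\sum_{\gamma_0}B_{\gamma_0}=\P_{\Gamma_0,L}$. The main technical issue will be the convergence and rearrangement of the infinite sums: the strong convergence of $\sum_\alpha\pi(\alpha)P_{L^{\Gamma_0}}\pi(\alpha^{-1})$ is automatic from orthogonality, while the Hilbert--Schmidt convergence of $\sum_\gamma B_\gamma$ is a consequence of condition (v) in Definition \ref{TC}, applied after decomposing $P_{L^{\Gamma_0}}=\sum_i\pi(s_i)P_L\pi(s_i^{-1})$ and expanding $\pi_0(\gamma)=P_0\pi(\gamma)P_0$; testing against fixed vectors $l,l'\in L^{\Gamma_0}$ reduces the matrix manipulations to scalar iterated sums where the interchange of order can be justified directly.
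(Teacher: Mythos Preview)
Your proof is correct and follows essentially the same route as the paper: both arguments reduce the projection property to the identity $\sum_{\gamma_0\in\Gamma_0}\pi(\gamma_0)P_{L^{\Gamma_0}}\pi(\gamma_0^{-1})=\mathrm{Id}_H$ (equivalently, after compressing by $P_0$, your key identity), which yields the convolution relation $\sum_{\alpha\beta=\gamma_0}B_\alpha B_\beta=B_{\gamma_0}$ (the paper's formula~(\ref{unitary}) with $\sigma_1=e$, $\sigma_2=\gamma_0$). Your write-up is somewhat more explicit in verifying that $L^{\Gamma_0}$ is $\Gamma_0$-wandering and generating and in tracking the convergence via condition~(v), but the underlying mechanism is identical.
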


\begin{proof}
The fact that $\P_{\Gamma_0, L}$ is a projection, and more generally,  the fact that  formula (\ref{a0}) in the next proposition defines a representation of the Hecke algebra of double cosets for $\Gamma_0 \in \S$, is a straightforward consequence of the following identity, valid for $\sigma_1, \sigma_2 \in G$, $\Gamma_0 \in \S$:
\begin{equation}\label{unitary}
\mathop{\sum}\limits_{\gamma \in \Gamma_0}P_{L^{\Gamma_0}}\pi_0(\sigma_1 \gamma)P_{L^{\Gamma_0}}\pi_0(\gamma^{-1}\sigma_2)P_{L^{\Gamma_0}} = P_{L^{\Gamma_0}}\pi_0(\sigma_1\sigma_2)P_{L^{\Gamma_0}}.
\end{equation}
The convergence in the above equation follows from the technical condition assumed in Definition \ref{TC}. Here, we are taking the pointwise operator product of two  operators series that are convergent in $\mathcal C_2(L)$.
Formula (\ref{unitary}) is  then a direct consequence of the fact that $$\mathop{\sum}\limits_{\gamma_0 \in \Gamma_0}\pi(\gamma)P_{L^{\Gamma_0}}\pi(\gamma^{-1})$$ is the identity operator on $H$. \end{proof}

In the following, we adapt the content of Proposition  \ref{Lspaces} to the context of a representation
$\pi_0$ with the properties introduced in Definition \ref{TC}.  In the next statement,  we construct the Hilbert space of $\Gamma_0$ "virtual" invariant vectors associated to $\pi_0$.


\begin{lemma}\label{tracespart2}
We assume the notations and definitions from Lemma \ref{tracespart1}.
Let  $H_0^{\Gamma_0}$ be  the space of formal series, of the following form
$$
H_0^{\Gamma_0} = \{ \mathop{\sum}\limits_{\gamma \in \Gamma_0}\pi_0(\gamma)l \; |\; l \in L^{\Gamma_0} \}.
$$
Let  $\D_{L, \pi}$ be defined  as in formula (\ref{d})  in Lemma \ref{Lspacespart1}.
This space  is subject to the same   identification as formula (\ref{ident}) in Definition \ref{Lspacespart2}. Hence
  \item {(i)}
$$
H_0^{\Gamma_0} = \{ \mathop{\sum}\limits_{\gamma \in \Gamma_0}\pi_0(\gamma)h \; |\; h \in \D_{L, \pi} \}.
$$
\item{(ii)}
The representation $\pi_0$ extends to a canonical representation $\pi_0^{\rm p}$ of $G$ into the linear isomorphism group of the vector space
$\mathop{\bigvee}\limits_{\Gamma_0 \in \S} H^{\Gamma_0}$.

\end{lemma}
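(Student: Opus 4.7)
My plan is to mirror the arguments from Proposition \ref{Lspaces} and Definition \ref{Lspacespart2}, substituting $\pi_0$ for $\pi$ throughout and invoking the summability hypotheses (v)--(vi) of Definition \ref{TC} wherever operator series in $\cC_2(L)$ must be summed. The key observation is that once the projection $P_0$ is inserted, all identities used to construct $H^{\Gamma_0}$ out of $H$ in the previous section descend to analogous identities in $H_0$, because $\pi_0(g) = P_0 \pi(g) P_0$.

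For part (i), the inclusion $\{\sum_{\gamma \in \Gamma_0} \pi_0(\gamma) l \mid l \in L^{\Gamma_0}\} \subseteq \{\sum_{\gamma \in \Gamma_0} \pi_0(\gamma) h \mid h \in \D_{L,\pi}\}$ is immediate from $L^{\Gamma_0} \subseteq \D_{L,\pi}$, which in turn follows from $L \subseteq \D_{L,\pi}$ (Lemma \ref{Lspacespart1}(ii)) together with the $\pi(G)$-invariance of $\D_{L,\pi}$, since $L^{\Gamma_0} = \bigoplus_i \pi(s_i) L$. For the reverse inclusion, given $h \in \D_{L,\pi}$ I would set
\[
l_0 = \mathop{\sum}\limits_{\gamma_0 \in \Gamma_0} P_{L^{\Gamma_0}}(\pi(\gamma_0) h) \in L^{\Gamma_0},
\]
exactly as in formula (\ref{justification}). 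Expanding $P_{L^{\Gamma_0}} = \sum_i \pi(s_i) P_L \pi(s_i)^{\ast}$ and re-indexing by $\gamma' = s_i^{-1}\gamma_0 s_i$ reduces the convergence of this double sum to sums of the form $\sum_{\gamma' \in s_i^{-1}\Gamma_0 s_i} P_L \pi(\gamma') [\pi(s_i^{-1}) h]$. Since $\Gamma_0 \cap s_i^{-1} \Gamma_0 s_i$ lies in $\S$ (by the closure property (\ref{gammasigma}) with $\sigma = s_i^{-1} \in G$), I split this sum into finitely many cosets of the intersection and obtain convergence from the $G$-invariance of $\D_{L,\pi}$ combined with the defining property (\ref{d}). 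The identification (\ref{ident}) then yields $\sum \pi_0(\gamma_0) h = \sum \pi_0(\gamma_0) l_0$, proving the equality of the two descriptions.

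For part (ii), I would define the extension $\pi_0^{\rm p}(g)$ by repeating the splitting construction (\ref{splitting}) of Proposition \ref{Lspaces}, with $\pi_0$ in place of $\pi$. Concretely, for $g \in G$ and $\eta = \sum_{\gamma_0 \in \Gamma_0} \pi_0(\gamma_0) l \in H_0^{\Gamma_0}$, I decompose $g\Gamma_0 = \bigcup_j \Gamma_0^j y_j$ into finitely many cosets of subgroups $\Gamma_0^j \in \S$ contained in $\Gamma_0 \cap \Gamma_{g^{-1}}$ such that $\Gamma_1^j := g\Gamma_0^j g^{-1} \in \tilde\S$, and set
\[
\pi_0^{\rm p}(g)\eta = \mathop{\sum}\limits_{j}\mathop{\sum}\limits_{\gamma_j \in \Gamma_1^j} \pi_0(\gamma_j)\, \pi_0(g y_j) l.
\]
The hypothesis $[\Gamma:\Gamma_{\sigma}] = [\Gamma:\Gamma_{\sigma^{-1}}]$ guarantees that each resulting sum is of the form associated with the space $H_0^{\Gamma_1^j}$, and therefore $\pi_0^{\rm p}(g)\eta$ lies in $\bigvee_{\Gamma_0 \in \S} H_0^{\Gamma_0}$. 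Independence of $\pi_0^{\rm p}(g)\eta$ from the choice of splitting and from the coset representatives $y_j$ is verified by a further appeal to the identification (\ref{ident}) (the argument is the same as the one implicitly used in Proposition \ref{Lspaces}), and the homomorphism property $\pi_0^{\rm p}(gg') = \pi_0^{\rm p}(g)\pi_0^{\rm p}(g')$ reduces to a formal check after concatenating two successive splittings. The inverse $\pi_0^{\rm p}(g^{-1})$ then exhibits $\pi_0^{\rm p}(g)$ as a linear isomorphism.

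The main technical obstacle is ensuring that all the rearranged sums converge even though $\pi_0|_\Gamma$ is no longer an integer multiple of $\lambda_\Gamma$ and there is no $\Gamma$-wandering generating subspace in $H_0$. This is precisely the role of Definition \ref{TC}(v)--(vi): every sum I form reduces, after projecting by $P_0$, to tails of the Hilbert--Schmidt convergent series $\sum_{\theta \in \Gamma_0 g} P_L\pi_0(\theta)P_L$ of formula (\ref{TCsum}), and hence converges. With this convergence bookkeeping in place, the entire construction is parallel to the one already carried out for $\pi$ in Proposition \ref{Lspaces}, and no new analytic ingredient beyond the technical condition in Definition \ref{TC} is required.
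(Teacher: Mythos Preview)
Your proposal is correct and follows essentially the same approach as the paper: the paper's own proof is extremely terse, simply noting that part (i) is a straightforward consequence of the identification in formula (\ref{ident}) and that $\pi_0^{\rm p}$ is constructed by the same procedure as in Proposition \ref{Lspaces}, formula (\ref{splitting}). You have unpacked exactly these two references with the appropriate convergence bookkeeping from Definition \ref{TC}, so your argument is a fleshed-out version of what the paper leaves implicit.
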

\begin{proof} The part (i) is a straightforward consequence of the identification in   formula (\ref{ident}). The representation of $\pi_0^{\rm p}$ is constructed using the same procedure as in 
Proposition \ref{Lspaces}, formula (\ref{splitting}).
\end{proof}

In the following result, we define a compatible scalar product on  the spaces of $\Gamma_0$-invariant vectors introduced above, $\Gamma_0\in \S$. We also construct a unitary representation on the inductive limit of the Hilbert  spaces of $\Gamma_0$-invariant vectors.

\begin{thm}\label{traces}
In the context introduced above, we have:

\item{(i)} 
Let $\Gamma_0\in \S$, let   $h_1,h_2 \in \D_{L, \pi}$. By analogy with formula (\ref{scalarpet})   in Proposition \ref{Lspaces} we define
\begin{equation}\label {genpet}
 \langle \mathop{\sum}\limits_{\gamma_0 \in \Gamma_0}\pi_0(\gamma_0)h_1, \mathop{\sum}\limits_{\gamma_0' \in \Gamma_0}\pi_0(\gamma_0')h_2 \rangle_{\infty} 
=\langle P_{L^{\Gamma_0}}\big(\mathop{\sum}\limits_{\gamma_0 \in \Gamma_0}\pi_0(\gamma_0)h_1\big), \mathop{\sum}\limits_{\gamma_0' \in \Gamma_0}\pi_0(\gamma_0')h_2\rangle
\end{equation}
%
%
%
%


For $\Gamma_1\subseteq  \Gamma_0$ subgroups in $\S$ the inclusions $H_0^{\Gamma_0}\subseteq H_0^{\Gamma_1}$ are isometric. Hence formula (\ref {genpet})
extends to a scalar product  $\langle \cdot, \cdot \rangle_{\infty}$ on $\mathop{\bigvee}\limits_{\Gamma_0 \in \S} H^{\Gamma_0}$.

\item(ii)The  scalar product $\langle \cdot, \cdot \rangle_{\infty}$
 on the space $\mathop{\bigvee}\limits_{\Gamma_0 \in \S} H^{\Gamma_0}$ verifies the assumptions in Definition \ref{formalism}. 

\item{(iii)}. Let $\tilde L^{\Gamma_0}$ be another $\Gamma_0$-wandering, generating subspace of $H$ for the representation $\pi|_{\Gamma_0}$. Assume that there exists two orthogonal partitions  $\tilde L^{\Gamma_0}=\oplus_{\gamma\in \Gamma_0} \tilde L_{\gamma }$ and
 $ L^{\Gamma_0}=\oplus_{\gamma\in \Gamma_0}  L_{\gamma }$, such that $\tilde L_{\gamma }=\pi(\gamma)L_{\gamma }$, for $\gamma \in \Gamma_0$. Then,
   substituting in the  formula (\ref{genpet}) the space 
$ L^{\Gamma_0}$ by $\tilde L^{\Gamma_0}$, does not change the value of the scalar product.

\item{(iv)} Let $\overline{H}^{\rm p}_0$ be the Hilbert space completion of the inductive limit $\mathop{\bigvee}\limits_{\Gamma_0 \in \S} H^{\Gamma_0}$  of the spaces constructed in (i).
For $g\in G$, we define the unitary $\overline{\pi}_0^{\rm p}(g)$ on $\overline{H}^{\rm p}_0$ by exactly the same formula as formula (\ref{splitting}) from  the proof of  Proposition \ref{Lspaces}.
Then
 $\overline{\pi}_0^{\rm p}$ is a unitary representation of $G$ into the unitary group of the Hilbert space $\overline{H}^{\rm p}_0$.

\end{thm}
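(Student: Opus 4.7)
My plan is to reduce everything to a manageable bilinear form on $L^{\Gamma_0}$ using the projection $\P_{\Gamma_0, L}$ established in Lemma \ref{tracespart1}, and then to piggyback on the structural arguments already used in Proposition \ref{Lspaces}.

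First, for part (i), I would rewrite the right-hand side of (\ref{genpet}) in a manifestly well-defined form. Setting $l_i = \sum_{\gamma_0 \in \Gamma_0} P_{L^{\Gamma_0}} \pi_0(\gamma_0) h_i \in L^{\Gamma_0}$ (the series converges by the convergence condition (v) of Definition \ref{TC}, since $h_i \in \D_{L,\pi}$), I would use the identity (\ref{unitary}) from the proof of Lemma \ref{tracespart1} to show that formula (\ref{genpet}) equals $\langle \P_{\Gamma_0, L}\, l_1,\, l_2 \rangle$ computed in $H$. Because $\P_{\Gamma_0, L}$ is a selfadjoint projection, this is a positive semidefinite sesquilinear form on the space of formal sums, and it respects the identification (\ref{ident}), because replacing $h$ by the canonical $l_0 = \sum_{\gamma_0} P_{L^{\Gamma_0}}(\pi_0(\gamma_0) h)$ does not alter $l_i$. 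Positive definiteness will follow once I observe that if $\P_{\Gamma_0, L} l_1 = 0$, then the formal sum is the zero vector of $H_0^{\Gamma_0}$ under identification (\ref{ident}), since $l_0$ in (\ref{justification}) is then zero. For isometry of the inclusion $H_0^{\Gamma_0} \subseteq H_0^{\Gamma_1}$ when $\Gamma_1 \subseteq \Gamma_0$, I would choose coset representatives $\Gamma_0 = \bigcup_j r_j \Gamma_1$ and coset representatives of $\Gamma_0$ in $\Gamma$ compatibly, so that $L^{\Gamma_1} = \bigoplus_j \pi(r_j) L^{\Gamma_0}$ as orthogonal sum inside $H$; the renormalization in (\ref{renorm}) makes the embedding $L^{\Gamma_0} \hookrightarrow L^{\Gamma_1}$ (via $l \mapsto \oplus_j \pi(r_j)l$) isometric, and a direct calculation, identical in structure to the one performed in the proof of Proposition \ref{Lspaces}, shows that $\P_{\Gamma_0, L}$ and the restriction of $\P_{\Gamma_1, L}$ to the image of this embedding produce the same scalar product.

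Part (ii) is then essentially a check-list: the vector space $\V$ of Definition \ref{formalism} is taken to be the space of formal sums $\bigvee_{\Gamma_0 \in \S} H_0^{\Gamma_0}$ (together with $\D_{L,\pi} \subseteq H_0$ as the dense invariant subspace $\D_\V$); positivity and the prehilbertian property were just established; $\Gamma_0$-invariance of the linear form $\langle v, \cdot \rangle_\infty$ for $v \in \V^{\Gamma_0}$ is built in because shifting $h$ by a $\Gamma_0$-element only rearranges the defining sum; and $G$-invariance is immediate from the explicit formula (\ref{splitting}), once we show that $\overline{\pi}_0^{\rm p}(g)$ is isometric, which I defer to (iv). For part (iii), the key remark is that the scalar product depends only on $P_{L^{\Gamma_0}}$, not on the basis $\{\pi(s_i) L\}$ chosen for $L^{\Gamma_0}$; under the hypothesis on $\tilde L^{\Gamma_0}$, one has $P_{\tilde L^{\Gamma_0}} = \sum_\gamma \pi(\gamma) P_{L_\gamma} \pi(\gamma^{-1})$ with exactly the same range $L^{\Gamma_0}$ up to the indicated $\Gamma_0$-translations, and reindexing the summation variable $\gamma_0$ absorbs the shifts, leaving the scalar product unchanged.

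For part (iv), I will copy the recipe from the proof of Proposition \ref{Lspaces}: for $g \in G$ and $\eta = \sum_{\gamma_0 \in \Gamma_0} \pi_0(\gamma_0) l \in H_0^{\Gamma_0}$, split $g\Gamma_0 = \bigcup_j \Gamma_0^j y_j$ into cosets of subgroups $\Gamma_0^j \in \S$ with $g\Gamma_0^j g^{-1} = \Gamma_1^j \in \S$, and define $\overline{\pi}_0^{\rm p}(g)\eta = \sum_j \sum_{\gamma_j \in \Gamma_1^j} \pi_0(\gamma_j) \pi_0(gy_j) l$. Well-definedness across different splittings is verified exactly as before, using the isometric inclusions from (i); the equal-index assumption $[\Gamma : \Gamma_\sigma] = [\Gamma : \Gamma_{\sigma^{-1}}]$ guarantees isometry, since the identity (\ref{unitary}) gives the same projection-based pairing on both sides; and the homomorphism property $\overline{\pi}_0^{\rm p}(g_1 g_2) = \overline{\pi}_0^{\rm p}(g_1)\overline{\pi}_0^{\rm p}(g_2)$ is checked on generating vectors by verifying that the two splittings of $g_1 g_2 \Gamma_0$ yield the same formal sum under (\ref{ident}). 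The main technical obstacle I foresee is in (i), namely double-checking that the expression $\langle \P_{\Gamma_0, L} l_1, l_2\rangle$ is finite and symmetric in $l_1, l_2$; symmetry requires rewriting the same formula with projection on the other factor, and this interchange is precisely where identity (\ref{unitary}) and the trace-class/Hilbert-Schmidt summability hypotheses (iv)--(vi) of Definition \ref{TC} are indispensable.
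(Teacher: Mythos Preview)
Your approach is essentially the one the paper takes: reduce (\ref{genpet}) to the pairing $\langle \P_{\Gamma_0,L}\,l_1,l_2\rangle$ via the identity (\ref{unitary}), invoke Proposition \ref{Lspaces} for the isometric inclusions, and define $\overline{\pi}_0^{\rm p}$ by the splitting formula (\ref{splitting}). The one place where the paper is more explicit than your sketch is the isometry of $\overline{\pi}_0^{\rm p}(\sigma)$ in part (iv). You write that ``the equal-index assumption \ldots\ guarantees isometry, since identity (\ref{unitary}) gives the same projection-based pairing on both sides,'' but this elides the real issue: after applying $\pi_0^{\rm p}(\sigma)$ to a vector in $H_0^{\Gamma_{\sigma^{-1}}}$ and unwinding the scalar product in $H_0^{\Gamma_\sigma}$, the projection that naturally appears is onto $\tilde L=\pi(\sigma^{-1})L^{\Gamma_\sigma}$, not onto $L^{\Gamma_{\sigma^{-1}}}$. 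Identity (\ref{unitary}) alone does not let you swap these two wandering subspaces; this is exactly what part (iii) is for, and the paper invokes (iii) at precisely this step (using the commutativity hypothesis (ii) of Definition \ref{TC} to check that $\tilde L$ and $L^{\Gamma_{\sigma^{-1}}}$ are related by the required $\Gamma_{\sigma^{-1}}$-translations). So your outline is correct, but you should make explicit that (iii) is not an independent side remark: it is the engine that drives the $G$-invariance in (ii) and the unitarity in (iv).
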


\vskip10pt


Before proving the theorem, we make a few remarks. The formula (\ref{genpet}) in the definition of the scalar product may also be continued with
\begin{equation}\nonumber
\label{consistence}
\langle \mathop{\sum}\limits_{\gamma_0 \in \Gamma_0}
\pi(\gamma_0)P_0h,
\mathop{\sum}\limits_{\gamma_0' \in \Gamma_0}
\pi(\gamma_0')P_0l
 \rangle_{\infty}.
 \end{equation}
\noindent This proves that  the scalar product that we are defining  on $H_0^{\Gamma_0}$ is consistent with the scalar product on the space $H^{\Gamma_0}$ introduced in Proposition \ref{Lspaces}.

In the case of automorphic forms, when $\pi_0$ is  the representation   $\pi_n$  and $P_0$ is a Bergman projection onto the associated space of square integrable analytic functions, the technical conditions (v), (vi)  in Definition \ref{TC} follow from the fact that the reproducing kernel for the space of automorphic forms is the sum, over $\Gamma$, of the reproducing kernels, restricted to the fundamental domain, for the operators $\chi_F\pi_0(\gamma)\chi_F$, $\gamma\in\Gamma$.  The same is valid for the sum over any double coset, the sum of the kernels being equal in this case to the reproducing kernel for the Hecke operator associated to a double coset. The convergence of reproducing  kernels holds true in the Hilbert-Schmidt norm ([Za]).  Note that in the same paper ([Za]) the absolute convergence for the sum of  traces is proved.

  The similarity with the Petersson scalar product formula follows from the fact that in the particular case corresponding to automorphic forms, the projection $P_L$ is substituted with the projection operator $M_ {\chi_F}$ obtained by multiplication with the characteristic function $\chi_F$ of the fundamental domain $F$. The fact that $P_0M_ {\chi_F}$ is trace class was checked in [GHJ], Section 3.3.
  The formula (\ref{genpet}) is reminiscent of the Pettersson scalar product. Indeed, to define the Petterson
  scalar product of two automorphic forms $f,g$, one proceeds with the $L^2$-scalar product of  $\chi_Ff$ and $g$.

  Note that one could have used directly formula (\ref{a0}) in the next proposition to define the Hecke operators.
There (see also [Ra2]) we give a direct proof that formula (\ref{a0}) is a representation of the Hecke algebra of double cosets of $\Gamma_0$ in $G$.
On the other hand, using the space $H_0^{\Gamma_0}$ as a space of averaging sums over $\Gamma$, implies  that the spaces of $\Gamma$-invariant vectors that we are considering in the theorem, correspond to   the spaces of automorphic forms.

The advantage of the approach considered in the  theorem, is the fact  that we have concrete formulae for the unitary representation $\overline{\pi_0}^{\rm p}$, directly described in the terms of the original representations $\pi_n$ and its interaction with $P_0M_ {\chi_F}$. This will be used later in the paper  for  computations of traces and of  characters of the associated unitary representations.

\vskip10pt

\begin{proof}[Proof of Theorem \ref{traces}]

We use the  fact that the sum in formula (\ref{proj}), Lemma \ref{tracespart1} determines a finite dimensional projection.    Then the scalar product in formula \ref{genpet} is further equal to
\begin{equation}\label{simplified}
 \langle P_{L^{\Gamma_0}}\big(\mathop{\sum}\limits_{\gamma_0 \in \Gamma_0}\pi_0(\gamma_0)h_1\big), P_{L^{\Gamma_0}} \big(\mathop{\sum}\limits_{\gamma_0' \in \Gamma_0}\pi_0(\gamma_0')h_2\big)\rangle 
 =
 \end{equation}
 \begin{equation}\nonumber
= \langle \P_{\Gamma_0, L}h_1, \P_{\Gamma_0, L}h_2\rangle 
 = \langle \P_{\Gamma_0, L}h_1, h_2\rangle
\end{equation}
 Since $\P_{\Gamma_0, L}$ is a finite dimensional projection in $L^{\Gamma_0}$, this is a well defined scalar product on the space $H_0^{\Gamma_0}$.

 The fact that for $\Gamma_0,\Gamma_1\in S$, $\Gamma_1\subseteq \Gamma_0$, the inclusions $H_0^{\Gamma_0}\subseteq H_0^{\Gamma_1}$ are isometric, is proved exactly
 as in Proposition \ref{Lspaces}.
  The statement (ii) is a straightforward consequence of the formula (\ref{genpet}) in the definition
 of the scalar product on $\Gamma_0$ invariant vectors. It corresponds,  to the independence from the choice of the fundamental domain, in the formula of the Peterson scalar product.


We prove the  invariance assumption from point 3) in Lemma \ref{formalism}.  
 We will prove that 
$\pi_0^{\rm p}(\sigma), \sigma \in G$
 maps 
 $H_0^{\Gamma_{\sigma^{-1}}}$
  isometrically  onto  
  $H_0^{\Gamma_{\sigma}}$.
  Fix as above $h_1,h_2$ in  $\D_{L, \pi}$. 
  Then, using formula \ref{simplified}, and using the construction of the representation 
  $\pi_0^{\rm p}$, we have 
  
 $$ \langle\pi_0^{\rm p}(\sigma)\big( \mathop{\sum}\limits_{\gamma_0 \in \Gamma_{\sigma^{-1}}}\pi_0(\gamma_0)h_1\big), \pi_0^{\rm p}(\sigma)\big(\mathop{\sum}\limits_{\gamma_0' \in \Gamma_{\sigma^{-1}}}\pi_0(\gamma_0')h_2\big) \rangle_{\infty} =$$
  $$\langle \mathop{\sum}\limits_{\gamma_0 \in \Gamma_{\sigma}}\pi_0(\gamma_0) \big(\pi_0(\sigma )h_1\big), 
\mathop{\sum}\limits_{\gamma_0' \in \Gamma_{\sigma}}\pi_0(\gamma_0')\big(\pi_0(\sigma )h_2\big) \rangle_{\infty}=
$$
$$\langle P_{L^{\Gamma_{\sigma}}}\big[ \mathop{\sum}\limits_{\gamma_0 \in \Gamma_{\sigma}}\pi_0(\gamma_0) \big(\pi_0(\sigma )h_1\big)\big], 
\big(\pi_0(\sigma )h_2\big) \rangle.
$$
 
  Since the $\pi_0(\sigma)$ is a unitary on $H_0$, this is further equal  to
\begin{equation}\label{intermediara}  
\langle \big[\pi(\sigma^{-1} )P_{L^{\Gamma_{\sigma}}}\pi(\sigma)\big] \pi_0(\sigma^{-1} )\big[ \mathop{\sum}\limits_{\gamma_0 \in \Gamma_{\sigma}}\pi_0(\gamma_0) \big(\pi_0(\sigma )h_1\big)\big], 
h_2\big) \rangle
\end{equation}
 In the preceding formula $\pi(\sigma^{-1} )P_{L^{\Gamma_{\sigma}}}\pi(\sigma)$ is the projection onto the space $\tilde L=\pi(\sigma^{-1} )L^{\Gamma_{\sigma}}$. This is a 
 $\Gamma_{\sigma^{-1}}$ wandering, generating subspace for the the representation  $\pi|_{\Gamma_{\sigma^{-1}}}.$ Denote by $P_{\tilde L}$ the orthogonal projection onto $\tilde L$.
 Then the term in formula (\ref{intermediara}) is further equal to
 \begin{equation}\label{intermediar1}
\langle P_{\tilde L}\big( \mathop{\sum}\limits_{\gamma_0 \in \Gamma_{\sigma^{-1}}}\pi_0(\gamma_0) h_1\big), h_2\rangle.\end{equation}
 Because of the technical condition (ii) in Definition \ref{TC}, the subspace $\tilde L$ is equivalent to the subspace $L^{\Gamma_{\sigma^{-1}}}$ in the sense of point (iii), in the present statement. Hence by point (iii), the term in formula (\ref{intermediar1}) is further equal  to 
 $$\langle P_{L^{\Gamma_{\sigma^{-1}}}}\big( \mathop{\sum}\limits_{\gamma_0 \in \Gamma_{\sigma^{-1}}}\pi_0(\gamma_0)h_1\big), h_2\rangle=$$
$$\langle  \mathop{\sum}\limits_{\gamma_0 \in \Gamma_{\sigma^{-1}}}\pi_0(\gamma_0)h_1, \mathop{\sum}\limits_{\gamma_0' \in \Gamma_{\sigma^{-1}}}\pi_0(\gamma_0')h_2 \rangle_{\infty}.$$
By the above chain of equalities, because of the definition of the scalar product in point (i)  we have proved   that 
$\pi_0^{\rm p}(\sigma)$
 maps 
 $H_0^{\Gamma_{\sigma^{-1}}}$
  isometrically  onto  
  $H_0^{\Gamma_{\sigma}}$. More generally, for $\Gamma_0\in S$, the same type of  argument gives that 
  $\pi_0^{\rm p}(\sigma)$ maps $H_0^{\Gamma_{\sigma^{-1}}\cap \Gamma_0}$ onto 
 $H_0^{\Gamma_{\sigma}\cap\sigma\Gamma_0\sigma^{-1}}$.
 Hence the invariance condition in point 3) from Definition \ref{formalism} holds true in the present case.
 Point (iv) is a straightforward consequence of the previous argument.

\end{proof}

In the next theorem, we describe the unitary equivalent representation of the Hecke operators acting on the spaces of $\Gamma_0$-invariant vectors  that were  introduced in the preceding theorem. We will  prove below that the projection $\P_{\Gamma_0, L}\in B(L^{\Gamma_0})$, introduced in formula (\ref{proj}), is   unitarily equivalent to a different   projection, introduced in formula (\ref{proj1}). This is useful for computing the dimensions of $\Gamma_0$-invariant vectors.  This is because we are proving that the range of  $\P_{\Gamma_0, L}$, which is a subspace of $L^{\Gamma_0}$,  is unitarily equivalent  to the space of vectors associated to $H_0$ that are fixed by ${\Gamma_0}$.

The reproducing kernel formula for  the projection onto the space of automorphic forms, and   the  reproducing kernel formula for the Hecke operators,  introduced in \cite{Za}, proves that the spaces  $H_0^{\Gamma_0}$, and the corresponding action of the Hecke operators, on the spaces of $\Gamma_0$-invariant vectors,   introduced in the previous proposition, and in the next theorem,  are the same (in the case of the upper halfplane) with  the ones in the classical case.

\begin{thm}\label{a0formula}
 We assume the context of the previous theorem. Let $\Gamma_0\in \S$.
By Lemma \ref{heckeconverted}, the representation of the Hecke algebra $\H_0(\Gamma_0,G)$ associated with the representation $\pi_0$ is  defined by the correspondence
\begin{equation}\label{cor1}
[\Gamma_0\sigma\Gamma_0]\rightarrow { [\Gamma_0:(\Gamma_0)_\sigma]}P_{H_0^{\Gamma_0}}\overline{\pi}_0^{\rm p}(\sigma)P_{H_0^{\Gamma_0}},
\quad\sigma \in G.
\end{equation}
\noindent Then
\item{(i)}
Consider the following infinite sums over cosets of $\Gamma_0$, which because of conditions (v), (vi) in Definition \ref{TC}, are convergent:
\begin{equation} \label {a0}
A_0(\Gamma_0 \sigma\Gamma_0) =
 \mathop{\sum}\limits_{\theta \in \Gamma_0\sigma\Gamma_0} P_{L^{\Gamma_0}}\pi_0(\theta)P_{L^{\Gamma_0}},\quad\sigma \in G.
 \end{equation}
 \noindent Then $$A_0(\Gamma_0 \sigma\Gamma_0) = \P_{\Gamma_0, L} A_0(\Gamma_0 \sigma\Gamma_0)\P_{\Gamma_0, L},$$ and hence
 $A_0(\Gamma_0 \sigma\Gamma_0)$ belongs to
   $B(\P_{\Gamma_0, L}L^{\Gamma_0})$.
 \item{(ii)}
The  correspondence
$$[\Gamma_0\sigma\Gamma_0]\rightarrow A_0(\Gamma_0\sigma\Gamma_0),\quad \sigma \in G,$$
determines a $\ast$-representation of $\H_0(\Gamma_0,G)$
 into  $B(P_{\Gamma_0,L} L^{\Gamma_0})$. This representation is unitarily equivalent to the
 $\ast$-representation (see the above formula  (\ref{cor1}))  of the Hecke algebra by means of Hecke operators associated with representation $\pi_0$.

\end{thm}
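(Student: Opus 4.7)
The plan is to mirror the argument of Theorem \ref{hecke}, now with the wandering-subspace decomposition replaced by the key identity (\ref{unitary}) from Lemma \ref{tracespart1}. First I would derive part (i) as a direct computation using that identity; then I would construct an explicit isometry from $\P_{\Gamma_0, L} L^{\Gamma_0}$ onto $H_0^{\Gamma_0}$ and verify that it intertwines $A_0(\Gamma_0\sigma\Gamma_0)$ with the Hecke operator in (\ref{cor1}). Multiplicativity and $\ast$-compatibility of the correspondence $\sigma \mapsto A_0(\Gamma_0\sigma\Gamma_0)$ then come for free, by transporting the Hecke algebra structure through this unitary equivalence.

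For (i), I would expand $\P_{\Gamma_0, L}\, A_0(\Gamma_0 \sigma \Gamma_0)\, \P_{\Gamma_0, L}$ as a triple sum indexed by $\gamma, \gamma' \in \Gamma_0$ and $\theta \in \Gamma_0 \sigma \Gamma_0$. Since $\Gamma_0 \sigma \Gamma_0$ is bi-$\Gamma_0$-invariant, I reindex $\theta \mapsto \gamma^{-1} \theta \gamma'^{-1}$. The inner sum over $\gamma$ then matches (\ref{unitary}) with $(\sigma_1, \sigma_2) = (e, \theta\gamma'^{-1})$ and collapses to $P_{L^{\Gamma_0}} \pi_0(\theta\gamma'^{-1}) P_{L^{\Gamma_0}}$; the remaining sum over $\gamma'$ then matches (\ref{unitary}) with $(\sigma_1, \sigma_2) = (\theta, e)$ after substituting $\gamma \to \gamma'^{-1}$, and collapses to $P_{L^{\Gamma_0}} \pi_0(\theta) P_{L^{\Gamma_0}}$. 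Summing over $\theta$ recovers $A_0(\Gamma_0 \sigma \Gamma_0)$. The same argument with only one factor of $\P_{\Gamma_0, L}$ gives $\P_{\Gamma_0, L} A_0 = A_0 = A_0 \P_{\Gamma_0, L}$, so $A_0$ acts on the subspace $\P_{\Gamma_0, L} L^{\Gamma_0}$ as claimed.

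For (ii), I would define $W_0 : \P_{\Gamma_0, L} L^{\Gamma_0} \to H_0^{\Gamma_0}$ by $W_0(\P_{\Gamma_0, L} h) := \sum_{\gamma \in \Gamma_0} \pi_0(\gamma) h$ for $h \in \D_{L, \pi}$. Well-definedness follows from the identification (\ref{ident}), since any $h, h'$ with $\P_{\Gamma_0, L} h = \P_{\Gamma_0, L} h'$ produce the same formal sum; surjectivity is immediate from the definition of $H_0^{\Gamma_0}$; and the isometry property follows from the scalar product formula (\ref{simplified}) in the proof of Theorem \ref{traces}, since $\|W_0(\P_{\Gamma_0, L} h)\|_\infty^2 = \langle \P_{\Gamma_0, L} h, h \rangle_H = \|\P_{\Gamma_0, L} h\|_H^2$. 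I would then verify the intertwining relation
\[
W_0\, A_0(\Gamma_0 \sigma \Gamma_0) = \bigl[\Gamma_0 : (\Gamma_0)_\sigma\bigr]\, P_{H_0^{\Gamma_0}}\, \overline{\pi}_0^{\rm p}(\sigma)\, P_{H_0^{\Gamma_0}}\, W_0
\]
by the same calculation as at the end of the proof of Theorem \ref{hecke}: splitting $\Gamma_0$ into cosets for $(\Gamma_0)_\sigma$, using formula (\ref{splitting}) to compute $\overline{\pi}_0^{\rm p}(\sigma)\, W_0(\P_{\Gamma_0, L} h)$, and then projecting onto $H_0^{\Gamma_0}$ by averaging over $\Gamma_0$. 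The resulting formal sum is precisely $W_0\bigl(A_0(\Gamma_0\sigma\Gamma_0)\P_{\Gamma_0, L} h\bigr)$, with the normalizing index $[\Gamma_0 : (\Gamma_0)_\sigma]$ appearing exactly as in Lemma \ref{heckeconverted}. Since $\sigma \mapsto [\Gamma_0 : (\Gamma_0)_\sigma]\, P_{H_0^{\Gamma_0}}\, \overline{\pi}_0^{\rm p}(\sigma)\, P_{H_0^{\Gamma_0}}$ descends to a $\ast$-representation of $\H_0(\Gamma_0, G)$—this being the compression of the unitary representation $\overline{\pi}_0^{\rm p}$ by the convolution operators $\chi_{K_0 \sigma K_0}$ of Lemma \ref{heckeconverted}—conjugation by $W_0^\ast$ shows that $\sigma \mapsto A_0(\Gamma_0 \sigma \Gamma_0)$ is a $\ast$-representation as well, unitarily equivalent to the Hecke operator representation.

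The main obstacle is the intertwining computation. The identification (\ref{ident}) on $H_0^{\Gamma_0}$ permits replacing a generic $h \in \D_{L,\pi}$ with $\P_{\Gamma_0, L} h \in L^{\Gamma_0}$ only after passing through the projection $P_{L^{\Gamma_0}}$, which is not $\pi_0$-equivariant; care is therefore required when commuting projections past representation operators, and this is exactly what the identity (\ref{unitary}) is designed to manage. The normalization $[\Gamma_0 : (\Gamma_0)_\sigma]$ likewise appears only after carefully tracking the overlap between left- and right-coset decompositions of $\Gamma_0 \sigma \Gamma_0$, in parallel with the analogous bookkeeping at the end of the proof of Theorem \ref{hecke}.
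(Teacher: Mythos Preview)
Your proposal is correct and follows essentially the same approach as the paper's proof: both define the partial isometry $W^{\Gamma_0}l=\sum_{\gamma\in\Gamma_0}\pi_0(\gamma)l$ with initial space $\P_{\Gamma_0,L}L^{\Gamma_0}$, use identity (\ref{unitary}) to show $A_0(\Gamma_0\sigma\Gamma_0)=\P_{\Gamma_0,L}A_0(\Gamma_0\sigma\Gamma_0)\P_{\Gamma_0,L}$, and invoke the intertwining computation from Theorem~\ref{hecke}. The only cosmetic difference is that the paper also remarks that (\ref{unitary}) yields the Hecke-algebra multiplicativity of the $A_0$'s directly, whereas you obtain it by transport through $W_0$; both routes are equivalent.
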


Note that, in particular, the operator  $A(\Gamma_0)= \P_{\Gamma_0, L}$ is unitarily equivalent to the projection on $\Gamma_0$ invariant vectors.

\begin{proof}
The  proof is similar to that of    Theorem \ref {hecke}.
As in Theorem \ref{hecke} and its proof, using formula (\ref{genpet}) for the scalar product, we may define  define partial isometries
$W^{\Gamma_0} : L^{\Gamma_0} \to H^{\Gamma_0}$ for $\Gamma_0 \in \S$ by
\begin{equation}\nonumber
W^{\Gamma_0}l = \mathop{\sum}\limits_{\gamma \in \Gamma_0}\pi_0(\gamma)l, \quad l\in L^{\Gamma_0}.
\end{equation}

Differently from   the case considered  in Theorem \ref {hecke},
the operators $W^{\Gamma_0}$ are partial isometries, having as  initial  space the projection  $\P_{\Gamma_0,L}$, introduced in formula (\ref{proj})  and images equal to the spaces
$H^{\Gamma_0}$, $\Gamma_0\in \S$.

We  use the partial isometry $W^{\Gamma_0}$, with initial space the projection $\P_{\Gamma_0, L}$ in $B(L^{\Gamma_0})$. Then $W^{\Gamma_0}$  transforms unitarily the Hecke operator $P_{H^{\Gamma_0}}\overline{\pi}^{\rm p}(\sigma)P_{H^{\Gamma_0}}$,  $\sigma \in G$ into the expression in formula (\ref{a0}).

Formula (\ref{unitary}) proves that for all $\Gamma_0 \in \S$, $\sigma \in G$, we have
$$
A_0(\Gamma_0\sigma\Gamma_0) = A_0(\Gamma_0\sigma\Gamma_0)\P_{\Gamma_0, L} = \P_{\Gamma_0, L}A_0(\Gamma_0\sigma\Gamma_0).
$$

The same formula proves that the operators
$$A_0(\Gamma_0\sigma\Gamma_0),\quad \sigma \in G,$$
determine a representation of the Hecke algebra of double cosets of $\Gamma_0$ in $G$.

\end{proof}

\vskip10pt

The previous proposition  gives an explicit representation of the Hecke operators, associated to the representation $\overline{\pi}_0^{\rm p}$ of $\overline{G}$ into $\overline{H}^{\rm p}_0$, by directly using the information from the original representation $\overline{\pi}_0^{\rm p}$. We summarize this in  Theorem \ref{L0}.

\vskip10pt

\begin{proof}[Proof of Theorem \ref{L0}] Let $L^{\Gamma_0}$ be the subspace defined in formula (\ref{lgamma}),  endowed with the normalized scalar  product defined in formula (\ref{renorm}). Using Theorem \ref {a0formula},  the proof of the formula (\ref{heckematrix0}) becomes  identical to the proof of the corresponding formula (\ref{heckematrix}) in Theorem \ref{hecke}. We use the choice of coset representatives from the statement. In passing from formula
(\ref{a0}) to formula (\ref{heckematrix0}), one simply uses the unitary operator 
$$\ell^2(\Gamma_0\backslash\Gamma)\otimes L\cong \oplus [\Gamma_0s_i]\otimes L,\quad L^{\Gamma_0}=\oplus \pi(s_i) L,$$
mapping $\oplus  [\Gamma_0s_i] \otimes l_i$ onto $\oplus \pi(s_i)l_i$, for $l_i\in L$.

\end{proof}

When $\sigma$ is the identity in the  formula (\ref{heckematrix0}), we obtain a  projection

 \begin{equation}\label{proj1}
 \tilde \P_{\Gamma_0, L}=
\mathop{\sum}\limits_{i,j}\mathop{\sum}\limits_{\gamma \in s_i^{-1}\Gamma_0s_j}P_{L}\pi_0(\gamma)P_L \otimes e_{\Gamma_0s_i, \Gamma_0s_j}.
\end{equation}
  Consequently, the operators in formula (\ref{heckematrix0}) belong to the algebra  $$\tilde \P_{\Gamma_0, L}\big[ B(l^2(\Gamma_0 \setminus \Gamma)) \otimes B(L)\big]  \tilde \P_{\Gamma_0, L}.$$

\vskip10pt

\begin{rem}\label{pin}
 We use the context of the previous theorem. Because of formula (\ref{heckematrix0}) one equivalent method to construct the    representation of the Hecke operators in formula (\ref{heckematrix0}) is as follows: consider, as in Example \ref{regular}, the vector space
 $$\V^{\Gamma_0}=l^2(\Gamma_0 \setminus G), \quad \Gamma_0\in S.$$
 On $\V^{\Gamma_0}$ we introduce the scalar product defined  as the linear extension of the following bilinear form
 $$\langle\Gamma_0\sigma_1, \Gamma_0\sigma_2\rangle_{\pi_0}=\frac{1}{[\Gamma:\Gamma_0]}
\sum_{\theta \in \sigma_1^{-1}\Gamma_0\sigma_2}{\rm Tr}(P_L\pi_0(\theta)P_L),\quad \Gamma_0\in \S, \sigma_1, \sigma_2\in G.$$

For $\Gamma_0\in \S$, we consider the usual algebraic representation of the Hecke operators on $\C (\Gamma_0 \setminus G). $
Considering on $\C (\Gamma_0 \setminus G) $ the  scalar product $\langle\cdot,\cdot\rangle_{\pi_0}$ introduced  in the above  formula, we obtain a unitary equivalent representation the Hecke algebra  to the representation introduced
in formula (\ref{heckematrix0}).

This corresponds to considering the state $\varepsilon$ on $C^\ast (\overline G)$, defined by
$$\varepsilon (\chi_{\sigma_1^{-1}\Gamma_0\sigma_2})= \frac{1}{[\Gamma:\Gamma_0]}
\sum_{\theta \in \sigma_1^{-1}\Gamma_0\sigma_2}{\rm Tr}(P_L\pi_0(\theta)P_L),\quad \Gamma_0\in \S, \sigma_1, \sigma_2\in G.$$

The state $\varepsilon$ can not be simultaneously used at all levels $\Gamma_0\in S$ because
of the renormalization factor $\frac{1}{[\Gamma:\Gamma_0]}$.
Note that the state $\varepsilon$ is in fact the composition of the trace with the family of completely positive maps constructed in Theorem \ref{cp}.
\end {rem}

\

\section{The values of the character $\theta_{\pi_0}$ associated  with representation $\pi_0$}\label{section traces}

In  this section we  derive a trace formula for the representation $\overline{\pi}_0^{\rm p}$. We note that $\overline{\pi}_0^{\rm p}$ is a type I representation of the $C^{\ast}$-algebra $C^{\ast}(\overline{G})$. As we noted in Proposition \ref{cala}, the representation  $\overline{\pi}_0^{\rm p}$ extends to a representation of $\A(G, \overline{G})$,
(or $\A_\epsilon(G, \overline{G})$ if a 2-cocycle is present). By Theorem \ref{L0},  we have  a formula  for the Hecke operator$P_{H_0^{\Gamma_0}}\overline{\pi}_0^{\rm p}(\sigma)P_{H_0^{\Gamma_0}}$,  associated with  the representation $\overline{\pi}_0^{\rm p}$. As explained below, this relates the trace formula for the representation $\overline{\pi}_0^{\rm p}$ with the trace formula for the representation $\pi_0$.

\vskip10pt

\vskip10pt

The following lemma is proved   in (\cite{Ca}, formula (13)). Here we give a different proof.
\begin{lemma}
The character   character $\theta_{\overline{\pi_0}^{\rm p}}$= "${\rm Tr}\; \overline{\pi}_0^{\rm p}$"  of the representation $\overline{\pi_0}^{\rm p}$  is computed by the formula (\ref{hyperfinite}).

\end{lemma}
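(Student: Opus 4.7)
My plan is to translate the trace on the right-hand side of~(\ref{hyperfinite}) into an integral of the character against an explicit probability measure concentrating at~$\sigma$, and then extract the pointwise value of $\theta_{\overline{\pi}_0^{\rm p}}$ by a Lebesgue-differentiation type argument on the totally disconnected group $\overline{G}$.

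I would first identify $P_{H_0^{\Gamma_0}}$ with $\overline{\pi}_0^{\rm p}(\mu(K_0)^{-1}\chi_{K_0})$, where $K_0=\overline{\Gamma_0}$ denotes the closure of $\Gamma_0$ in $\overline{G}$. This is exactly the identification used in the proof of Proposition~\ref{cala}: since $\chi_{K_0}\ast\chi_{K_0}=\mu(K_0)\chi_{K_0}$ in $C^{\ast}(\overline{G})$, the element $\mu(K_0)^{-1}\chi_{K_0}$ is a self-adjoint idempotent whose image under $\overline{\pi}_0^{\rm p}$ is the subspace of $K_0$-fixed vectors, which coincides with $H_0^{\Gamma_0}$ by the construction from Definition~\ref{formalism} and Theorem~\ref{traces}. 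Using bi-invariance of the Haar measure $\mu$ (which holds under our standing hypothesis $[\Gamma:\Gamma_\sigma]=[\Gamma:\Gamma_{\sigma^{-1}}]$) together with the group algebra structure, this identification yields
\begin{equation}\nonumber
P_{H_0^{\Gamma_0}}\overline{\pi}_0^{\rm p}(\sigma)P_{H_0^{\Gamma_0}}=\overline{\pi}_0^{\rm p}(f_{\sigma,K_0}),\qquad f_{\sigma,K_0}=\frac{1}{\mu(K_0)^{2}}\,\chi_{K_0}\ast\delta_\sigma\ast\chi_{K_0},
\end{equation}
where $f_{\sigma,K_0}$ is a non-negative, locally constant function supported on the compact open double coset $K_0\sigma K_0$ and satisfying $\int_{\overline{G}} f_{\sigma,K_0}\,d\mu=1$. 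Applying the distributional identity defining the character $\theta_{\overline{\pi}_0^{\rm p}}$, we then conclude that
\begin{equation}\nonumber
{\rm Tr}\bigl(P_{H_0^{\Gamma_0}}\overline{\pi}_0^{\rm p}(\sigma)P_{H_0^{\Gamma_0}}\bigr)=\int_{K_0\sigma K_0}\theta_{\overline{\pi}_0^{\rm p}}(g)\,f_{\sigma,K_0}(g)\,d\mu(g),
\end{equation}
namely the average of $\theta_{\overline{\pi}_0^{\rm p}}$ over $K_0\sigma K_0$ with respect to the probability measure $f_{\sigma,K_0}\,d\mu$.

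As $\Gamma_0\downarrow e$ along~$\S$, the compact open subgroups $K_0$ form a neighborhood basis of the identity in $\overline{G}$, so the clopen sets $K_0\sigma K_0$ form a neighborhood basis of~$\sigma$. The probability measures $f_{\sigma,K_0}\,d\mu$ therefore concentrate at~$\sigma$, and a Lebesgue-differentiation argument for the locally integrable function $\theta_{\overline{\pi}_0^{\rm p}}$ relative to this shrinking basis of clopen neighborhoods gives
\begin{equation}\nonumber
\mathop{\lim}\limits_{\mathop{\Gamma_0\downarrow\,e}\limits_{\Gamma_0\in\S}}\int_{\overline{G}}\theta_{\overline{\pi}_0^{\rm p}}(g)\,f_{\sigma,K_0}(g)\,d\mu(g)=\theta_{\overline{\pi}_0^{\rm p}}(\sigma)
\end{equation}
at almost every $\sigma\in\overline{G}$, which combined with the trace identity above is the desired formula~(\ref{hyperfinite}). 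The main obstacle is precisely this last step: in the absence of any continuity hypothesis on $\theta_{\overline{\pi}_0^{\rm p}}$, one must extract the pointwise value of the character from averages over shrinking clopen neighborhoods. Local integrability together with the bi-invariance of $\mu$ and the nested clopen structure of $\{K_0\sigma K_0\}$ yields convergence at every Lebesgue point; equation~(\ref{hyperfinite}) is then understood either almost everywhere, or, equivalently, as the identification of the character with its canonical Lebesgue-differentiation representative on the conull set of Lebesgue points.
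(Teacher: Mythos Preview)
Your proof is correct and follows essentially the same approach as the paper's: both identify $P_{H_0^{\Gamma_0}}\overline{\pi}_0^{\rm p}(\sigma)P_{H_0^{\Gamma_0}}$ with $\overline{\pi}_0^{\rm p}$ applied to a normalized characteristic function of the double coset $K_0\sigma K_0$, then invoke Lebesgue differentiation for the locally integrable character. The paper runs the computation in the opposite direction (starting from the averaged trace $\mu(K_0\sigma K_0)^{-1}{\rm Tr}\,\overline{\pi}_0^{\rm p}(\chi_{K_0\sigma K_0})$ and reducing it algebraically via Lemma~\ref{heckeconverted} to ${\rm Tr}(P_{H_0^{\Gamma_0}}\overline{\pi}_0^{\rm p}(\sigma)P_{H_0^{\Gamma_0}})$), whereas you build the probability density $f_{\sigma,K_0}$ directly; since $\chi_{K_0}\ast\delta_\sigma\ast\chi_{K_0}=\mu((K_0)_\sigma)\chi_{K_0\sigma K_0}$, your $f_{\sigma,K_0}\,d\mu$ is exactly the uniform probability on $K_0\sigma K_0$ and the two arguments coincide. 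Your explicit acknowledgement that the limit holds at Lebesgue points (hence almost everywhere) is in fact more careful than the paper's statement.
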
\label{cartiertr}
\begin{proof}
Using the fact that the character is locally integrable (\cite{Sal}) we have the following formula:
\begin{equation}\label{plancherel3}
\theta_{\overline{\pi_0}^{\rm p}}(\sigma)=
\mathop{\lim}\limits_{\mathop{\Gamma_0 \downarrow \; e}
\limits_{\Gamma_0 \in \S}} \frac{1}{\mu(\overline{\Gamma_0}\sigma
\overline{\Gamma_0})}
{\rm Tr}\big(\overline{\pi}_0^{\rm p}
({\chi}_{{\overline{\Gamma_0}}
\sigma
\overline{\Gamma_0}})\big),\quad \sigma \in G
\end{equation}
\noindent Clearly, using the notations  introduced before the statement of Lemma \ref{heckeconverted}, we have that
$$\mu(\overline{\Gamma_0}\sigma\overline{\Gamma_0})=
[\Gamma_0: (\Gamma_0)_\sigma]\mu(\overline{\Gamma_0}\sigma),\quad \Gamma_0 \in S, \sigma \in G$$
Since the measure $\mu$ is obtained from the Haar measure on the profinite completion of $\Gamma$, and since, by the general assumptions, $\mu$ is bivariant on $\overline{G}$, this is further equal to
$$[\Gamma_0: (\Gamma_0)_\sigma]\frac{1}{[\Gamma:\Gamma_0]}.$$
Hence, we  continue  the equality in the  formula \eqref{plancherel3} with:
\begin{equation}\nonumber
\mathop{\lim}\limits_{\mathop{\Gamma_0 \downarrow \; e}
\limits_{\Gamma_0 \in \S}}
 \frac{[\Gamma:\Gamma_0]}{[\Gamma_0: (\Gamma_0)_\sigma]}
 {\rm Tr}\big(
 \overline{\pi}_0^{\rm p}
({\chi}_{{\overline{\Gamma_0}}
\sigma
\overline{\Gamma_0}})
\big).
\end{equation}
\noindent Using formula (\ref{completehecke}) in the statement of Remark \ref{heckeconverted}, the above chain of equalities is continued with
\begin{equation}\label{cc}
\begin{aligned}
& \mathop{\lim}\limits_{\mathop{\Gamma_0 \downarrow \; e}
\limits_{\Gamma_0 \in \S}}[\Gamma:\Gamma_0]^2
{\rm Tr}\big( \overline{\pi}_0^p
({\chi}_{{\overline{\Gamma_0}}})
\overline\pi_0^{\rm p}({\sigma})
\overline\pi_0^{\rm p}({\chi}_{\overline{\Gamma_0}})
\big) \\ & \qquad =
\mathop{\lim}\limits_{\mathop{\Gamma_0 \downarrow \; e}
\limits_{\Gamma_0 \in \S}}\frac{1}{(\mu(\overline{\Gamma_0}))^2}
{\rm Tr}\big( \overline{\pi}_0^{\rm p}
({\chi}_{{\overline{\Gamma_0}}})
\overline\pi_0^{\rm p}({\sigma})
\overline\pi_0^{\rm p}({\chi}_{\overline{\Gamma_0}}))
\big).
\end{aligned}
\end{equation}
If $K_0$ is the closure of a subgroup in $\S$, then in $C^\ast (\overline G)$, using the product of convolutor operators we have that $(\chi_{K_0})^2= \mu(K_0)\chi_{K_0}$. Hence $\frac{1}{\mu(K_0)} \chi_{K_0}$ is a projection. We denote by  $\tilde\chi_{K_0}$ the renormalized convolutor operator $\frac{1}{\mu(K_0)} \chi_{K_0}$.
Thus the equality in formula (\ref{cc}) is continued with

$$
\mathop{\lim}\limits_{\mathop{\Gamma_0 \downarrow \; e}\limits_{\Gamma_0 \in \S}} {\rm Tr}(\overline{\pi}_0^{\rm p}(\tilde{\chi}_{\overline{\Gamma}_0})\overline{\pi}_0^{\rm p}(\sigma)\overline{\pi}_0^{\rm p}(\tilde{\chi}_{\overline{\Gamma}_0})).
%
$$


 Since $\overline{\pi}_0^{\rm p}$ is a representation of $C^{\ast}(G, \overline{G})$, this is equal to
$$
\mathop{\lim}\limits_{\mathop{\Gamma_0 \downarrow \; e}\limits_{\Gamma_0 \in \S}} 
{\rm Tr}(P_{H_0^{\Gamma_0}}\overline{\pi}_0^{\rm p}(\sigma)P_{H_0^{\Gamma_0}}).
$$



\end{proof}


\begin{proof}[Proof of Corollary \ref{plancherel}]
The formulae (\ref{hyperfinite}) and (\ref{f8}) imply immediately the conclusion of formula (\ref{realresult}).
We group together the terms in the  sum in formula (\ref{realresult}) according to classes of conjugation, and take the limit. The fact that the group $\Gamma^{\rm st}_g$ is trivial prevents any $\gamma \in \Gamma$ to show up more than once in the sum in formula (\ref{realresult}).
The absolute convergence of the sums involved in the limiting process
(Definition \ref{TC}) implies the result in the second formula of the Corollary.

\end{proof}

\

\

\begin{proof}[Proof of Corollary \ref {deligne}]
Recall that for  a group element  $g\in G$, the stabilizer group   $\Gamma^{\rm st}_g$ is $\{ g | \gamma g = g\gamma \}$.
 Let $\sigma$ be an element in
$G= \PGL(2,\mathbb Z[\frac {1}{p}])$ such that ${\Gamma^{\rm st}_\sigma}$ is trivial.
Then, the expression in formula (\ref{berezin}):  $$\mathop{\sum}\limits_{\gamma \in \Gamma}{\rm Tr}(P_L\pi_n(\gamma\sigma\gamma^{-1})),$$ may be  computed directly   by using Berezin's symbol function ([Be]).

Indeed, the above sum has the following expression:
$$
\begin{aligned}
& \mathop{\sum}\limits_{\gamma\in \Gamma}{\rm Tr}_{B(H_n)}
\big(M_{\chi_F}\pi_0(\gamma)\pi_0(\sigma)\pi_0(\gamma^{-1})\big) \\ & \qquad =
\mathop{\sum}\limits_{\gamma\in \Gamma}{\rm Tr}_{B(H_n)}
\big(M_{\chi_F}\pi(\gamma)\pi_0(\sigma)\pi(\gamma^{-1})\big) \\ & \qquad =
\mathop{\sum}\limits_{\gamma\in \Gamma}{\rm Tr}_{B(H_n)}
\big(\pi(\gamma^{-1})M_{\chi_F}\pi(\gamma)\pi_0(\sigma)\big) \\ & \qquad =
\mathop{\sum}\limits_{\gamma\in \Gamma}{\rm Tr}_{B(H_n)}
\big(M_{\gamma\chi_F}\pi_0(\sigma)\big).
\end{aligned}
$$
Let  $\nu_0 = ({\rm Im} \; z)^{-2}d\bar{z}dz$ be the canonical $\PSL(2,\R)$-invariant measure on $\mathbb H$  and let $\widehat{\pi_n(\theta)}(\bar{z}, z)$, $z\in \mathbb H$,   be the Berezin contravariant symbol ([Be]) of the unitary operator $\widehat{\pi_n(\theta)}$. Then the above chain of equalities is continued   with the following equality:
$$
\mathop{\sum}\limits_{\gamma \in \Gamma} \int_{\gamma F}\widehat{\pi_n(\theta)}(\bar{z}, z)d\nu_0(z).
$$

This sum is then
$$
\int_{\bH} \widehat{\pi_n(\theta)}(\bar{z}, z),
$$

This last term is   the character "${\rm Tr}\; \pi_n(\sigma)$" of the representation $\pi_n$ (see  [Ne]).  The formula for the above sum is also computed differently in [Za].

\end{proof}

\begin{proof}[Proof of Lemma \ref{characteroriginal}]
 Denote the Hilbert space  on which the representation $\overline{\pi}_0^R$ acts by $H_0^R$. Then
$$
\begin{aligned}
{\rm Tr}_{B(H_0^R)} & \left(\int_{\overline{G}^R}f(g)\overline{\pi}_0^R(g)dg \right) = \mathop{\sum}\limits_{\gamma}{\rm Tr}_{B(H_0^R)}\left(P_{[\pi(\gamma)L]}\int_{\overline{G}^R}f(g)\overline{\pi}_0^R(g)dg \right) \\ & =
 \mathop{\sum}\limits_{\gamma} \int_{\overline{G}^R}f(g){\rm Tr}_{B(H_0^R)}(P_{\pi(\gamma)L}\overline{\pi}_0^R(g))dg \\ & =
\mathop{\sum}\limits_{\gamma \in \Gamma}\int_{\overline{G}^R}f(g){\rm Tr}_{B(L)}(P_L\overline{\pi}_0^R(\gamma g\gamma^{-1})P_L)dg \\ & =
\int_{\overline{G}^R} f(g) \mathop{\sum}\limits_{\gamma}{\rm Tr}_{B(L)}(P_L\overline{\pi}_0^R(\gamma g\gamma^{-1})P_L)dg.
\end{aligned}
$$

The second part of the statement is a consequence of Corollary \ref{plancherel}.
\end{proof}

\vskip10pt

\vskip20pt

\section{The case when the representation $\pi$ admits a "square root" $\pi_0 \otimes \pi_0^{\rm op}$}\label{squareroot}

In this section we analyze the case where a unitary  representation $\pi$ as in  Section \ref{axioms} admits a square root $\pi_0 \otimes \pi_0^{\rm op}$, where $\pi_0$ is a (projective) unitary representation as in Section \ref{pi0}. Since the notation $\overline \pi^{\rm p}$ is reserved to denote the extension of the representation $\pi$ to the Schlichting completion, we will use in this section the notation   $\pi^{\rm op}$ to denote the conjugate representation of $\pi_0$.

This is the situation of Example \ref{koop},  Section \ref{axioms}, when $G=\PGL(2,\Z[\frac{1}{p}])$, $p$ a prime,  $\Gamma$ is the modular group, $\mathcal X=\bH$, and  $\pi_{\rm Koop}$ is the Koopmann representation on $L^2(\bH, \nu_0)$ corresponding to the action of $\PSL(2,\R)$ by M\" obius transformations on the upper halfplane. By Berezin's quantization techniques (\cite{Be}), independently noted in\cite{Be} (see also \cite{Ra3}), we have
$$
\pi_{\rm Koop} = \pi_n \otimes {\pi}^{\rm op}_n, \quad n \geq 1,
$$

\noindent where $\pi_n$ is  any representation in the discrete series of $\PSL(2,\R)$. We will use this as a motivation to analyze directly representations of the form $\pi_0 \otimes \pi_0^{\rm op}$, where $\pi_0$ is as in the previous section.

Before proceeding to this analysis, we note one additional property, common to all the representations   $\overline{\pi}_0^{\rm p}$, $\overline{\pi}^{\rm p}$ constructed in the previous two sections. We will prove that the  above representations are in one to one correspondence with the completely positive maps $\Phi$, $\ast$-representations of the operator system introduced in Definition \ref{canonicalos}. 

We introduce the following notations:
If $g\in G$, we let $L_g\in C^\ast (G)$ be the convolutor with $g$. If $f$ is a function in $C (\overline G)$ we denote by $L_f \in C^\ast (\overline G)$  the operator of convolution with $f$. 
Such a representation plays  the role of an "operator valued eigenvector" for the Hecke algebra.

Indeed, we prove in particular the following property of the map $\Phi$.  If $\sigma_1$, $\sigma_2 \in G$, and
$$[\Gamma\sigma_1\Gamma][\Gamma \sigma_2]= \sum_{j} [\Gamma\theta_j],$$
then
$$\Phi(L_{\chi_{[\Gamma\sigma_1\Gamma]}})\Phi(L_{\chi_{[\Gamma\sigma_2]}})=
\sum_{j}\Phi(L_{ \chi_{[\Gamma\theta_j]}}).$$
If $\Phi$ would take scalar values, then the above property would be exactly the property that an eigenvalue for the action of the Hecke algebra on $\ell^2(\Gamma\backslash G)$ would have.

Let
  $\mathcal A_0(G, \overline {G})\subseteq  \mathcal A(G, \overline {G})$ be the dense subalgebra generated by convolution operators with elements in $G$ and by convolution operators with characteristic functions of cosets of subgroups in $\S$. We denote in this section, by  $\P_{\pi_0, L}$,  the projection  introduced  in formula (\ref{proj}), corresponding to $\Gamma_0=\Gamma$.

\vskip10pt

\begin{thm}\label{cp} Assume that $\pi_0$ is a representation of $G$, as in Definition \ref{TC}.
 We define a linear  map
$
\Phi : \mathcal A_0(G, \overline {G}) \to \ B(L),
$
by the following formula. Let $g \in G$, $\Gamma_0 \in \S$ and let  $g\overline{\Gamma}_0$ be the corresponding  coset. We define
\begin{equation}\label{defphi}
\begin{aligned}
\Phi(L_{\chi_{g\overline{\Gamma}_0}}) & =  \mathop{\sum}\limits_{\theta \in g\Gamma_0}P_L \pi_0(\theta)P_L \\
\Phi(L_g) & =P_L\pi(L_g)P_L .
\end{aligned}
\end{equation}
%
%
Then $\Phi$ has the following properties:
\item {(i)}
$\Phi|_{\O(K, G)}$ is a $\ast$-representation of the operator system
$\O(K, G) $ introduced in Definition \ref {thecanonicalos}, with values in $B(L)$.

  \item(ii) Consider the vector space $L(K, \overline {G})\subseteq \O(K, G)   $   introduced in the   definition mentioned above.

Then $\Phi|_{L(K, \overline {G})}$ takes values in
$B(L)\P_{\pi_0, L}$ and
  $\Phi|_{\H_0(K,\overline G)}$ is $\ast$-algebra representation of $ \H_0(K,\overline G)$ into $\P_{\pi_0, L}B(L)\P_{\pi_0, L}$.
Consequently,   $\Phi|_{\O(K, G)}$   takes values in
$B(L)\P_{\pi_0, L}B(L)$.

\item {(iii)}
$\Phi$ is a completely positive map on $\mathcal A_0(G, \overline {G})$, that is it maps positive elements of the form $X^\ast X, X\in \mathcal A_0(G, \overline {G})$, into positive elements.

\end{thm}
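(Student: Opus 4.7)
The plan rests on one key identity, namely
\[
\sum_{\gamma\in\Gamma}\pi(\gamma)P_L\pi(\gamma^{-1})=I_H,
\]
which expresses that $L$ is a $\Gamma$-wandering, generating subspace for $\pi$, combined with the definitional relations $\pi_0(g)=P_0\pi(g)P_0$ and $[P_0,\pi(g)]=0$ for all $g\in G$.

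To prove \emph{(i)}, I would fix $\sigma_1,\sigma_2\in G$ and expand
\[
\Phi(\chi_{\sigma_1 K})\Phi(\chi_{\sigma_2 K})^{\ast}=\sum_{\gamma_1,\gamma_2\in\Gamma}P_L\pi_0(\sigma_1\gamma_1)P_L\pi_0(\gamma_2^{-1}\sigma_2^{-1})P_L,
\]
then substitute $\alpha=\gamma_1\gamma_2^{-1}$, $\delta=\gamma_2$, so that $\gamma_1=\alpha\delta$. Using $\pi_0(\sigma_1\alpha\delta)=\pi_0(\sigma_1\alpha)\pi_0(\delta)$ and $\pi_0(\delta^{-1}\sigma_2^{-1})=\pi_0(\delta^{-1})\pi_0(\sigma_2^{-1})$, the $\delta$-sum telescopes, since
\[
\sum_{\delta\in\Gamma}\pi_0(\delta)P_L\pi_0(\delta^{-1})=P_0\Big(\sum_{\delta\in\Gamma}\pi(\delta)P_L\pi(\delta^{-1})\Big)P_0=P_0
\]
by the identity above together with $[P_0,\pi]=0$. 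Since $\pi_0(\sigma_1\alpha)P_0=\pi_0(\sigma_1\alpha)$, what remains equals $\sum_{\alpha\in\Gamma}P_L\pi_0(\sigma_1\alpha\sigma_2^{-1})P_L=\Phi(\chi_{\sigma_1K\sigma_2^{-1}})$. Convergence throughout is guaranteed by conditions (v) and (vi) of Definition \ref{TC}, and linear extension covers all of $\mathcal{O}(K,G)$.

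For \emph{(ii)}, I would read off from (i) applied at $\sigma_1=\sigma_2=e$ that $\Phi(\chi_K)=\P_{\pi_0,L}$ is idempotent, consistent with Lemma \ref{tracespart1}. Because $\chi_{\sigma K}\ast\chi_K=\chi_{\sigma K}$ in $C^{\ast}(\overline{G})$, applying (i) yields $\Phi(\chi_{\sigma K})=\Phi(\chi_{\sigma K})\P_{\pi_0,L}\in B(L)\P_{\pi_0,L}$, and symmetrically $\Phi(\chi_{K\sigma K})\in\P_{\pi_0,L}B(L)\P_{\pi_0,L}$. Multiplicativity on $\H_0(K,\overline{G})$ then follows by writing $xy=x(y^{\ast})^{\ast}$ and combining the product formula from (i) with the $\ast$-preserving property $\Phi(y^{\ast})=\Phi(y)^{\ast}$, which itself is a direct corollary of (i).

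For the complete positivity of $\Phi$ on $\mathcal{A}_0(G,\overline{G})$, my plan is to realise $\Phi$ as a Stinespring compression $\Phi(\cdot)=V^{\ast}\rho(\cdot)V$ for a suitable $\ast$-representation $\rho$ of $\mathcal{A}_0(G,\overline{G})$ and a bounded $V:L\to K$. The natural candidate combines $\overline{\pi}^{\rm p}$ on $\overline{H}^{\rm p}$ from Proposition \ref{cala} with the partial isometries $W^{\Gamma_0}$ from Theorem \ref{a0formula}. The main obstacle is that the defining formula $\Phi(L_g)=P_L\pi(g)P_L$ uses $\pi$, whereas $\Phi(L_{\chi_{g\overline{\Gamma}_0}})$ uses $\pi_0$, so the reconciliation inside a single representation $\rho$ is delicate; if the direct construction proves intractable I would fall back on the observation that (i) already yields complete positivity on the operator system $\mathcal{O}(K,G)$ (since any $\ast$-representation of an operator system is automatically completely positive) and then use the generating relations $g\chi_{K_0}=\chi_{gK_0}$ in $\mathcal{A}_0(G,\overline{G})$ together with an Arveson-type extension to promote this to complete positivity on the full dense subalgebra.
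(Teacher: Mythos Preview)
Your arguments for (i) and (ii) are correct and essentially match the paper's own proof: the key identity is exactly formula~(\ref{unitary}), and your resummation via $\sum_{\delta\in\Gamma}\pi_0(\delta)P_L\pi_0(\delta^{-1})=P_0$ is the clean way to see it. The paper phrases this by reference to formula~(\ref{unitary}) and to Lemma~\ref{rephecke}, but the content is the same.

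Part (iii), however, has a genuine gap. Your fallback via an Arveson-type extension does not prove what is claimed: Arveson's theorem would produce \emph{some} completely positive extension of $\Phi|_{\mathcal{O}(K,G)}$ to $\mathcal{A}_0(G,\overline{G})$, but you need to show that the \emph{given} map $\Phi$ (already defined on all of $\mathcal{A}_0(G,\overline{G})$ by formula~(\ref{defphi})) is positive. There is no uniqueness statement that forces the Arveson extension to coincide with your $\Phi$. More concretely, a general $X\in\mathcal{A}_0(G,\overline{G})$ is a finite linear combination of characteristic functions of cosets $g\overline{\Gamma}_0$ with $\Gamma_0$ ranging over $\S$, and part~(i) only controls the case $\Gamma_0=\Gamma$.

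The paper's route avoids this entirely. Given any such $X$, one passes to a common refinement $\Gamma_0\in\S$ so that $X$ lies in the span of cosets of $K_0=\overline{\Gamma_0}$. One then replaces $\Gamma$ by $\Gamma_0$ and $L$ by $L^{\Gamma_0}$ to obtain an analogous map $\Phi_{\Gamma_0}$, which by the same argument as in (i) is a $\ast$-representation of $\mathcal{O}(K_0,G)$ and hence positive on $X^{\ast}X$. The crucial observation (Lemma~\ref{remarca1}) is the compression identity
\[
\Phi=\tilde{P}_L\,\Phi_{\Gamma_0}\,\tilde{P}_L,
\]
where $\tilde{P}_L$ projects $L^{\Gamma_0}$ onto the first summand $L$. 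Since compression by a fixed operator preserves positivity, $\Phi(X^{\ast}X)\ge 0$ follows. This is the missing ingredient in your plan, and it replaces both the Stinespring construction you could not close and the inapplicable Arveson extension.
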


\vskip10pt

\begin{proof} The statement (i) is a consequence of formula (\ref{unitary}). Formula (\ref{multiplicativity1}) in Lemma  \ref{rephecke}, combined with Proposition \ref{lifting},  also provide a proof of statement (i).
The statement (ii) is a consequence of statement (i). Using the fact that property (i) proves the  positivity of the map $\Phi$ on positive elements  $X^\ast X$, where $X\in \C(\chi_{\sigma K} | \sigma \in G)$. For subgroups $\Gamma_0\in \S$, we argue as follows. We let $K_0$ be the closure in $\overline G$ of $\Gamma_0$. We use Lemma \ref{remarca1} first to establish the positivity of $\Phi_{\Gamma_0}(X_0^\ast X_0)$ for $X_0$ in
$\C(\chi_{\sigma K_0} | \sigma \in G)$. The reduction formula (\ref{reduction}) implies that $\Phi(X_0^\ast X_0)$ is also positive.

\end{proof}

The above theorem may be easily generalized by replacing $\Gamma$ with a subgroup $\Gamma_0\in S$. In this case, there is an obvious relation between the corresponding operator system representations.

\begin{lemma}\label{remarca1}
 We use the definitions and notations from the previous theorem.
 Let $\Gamma_0$ be any subgroup in $\S$, and let
$L^{\Gamma_0}$ be as in Theorem \ref{traces}. Then, using $L^{\Gamma_0}$ instead of $L$, one may repeat the above construction for $\Gamma_0$ instead of $\Gamma$. The corresponding completely positive map $\Phi_{\Gamma_0}$ constructed in the previous statement will have the same properties as $\Phi$, replacing $\Gamma_0, \overline{\Gamma_0}$ for $\Gamma,K$.

We embed $L$ into $L^\Gamma_0=\mathop{\oplus}\limits_{i}\pi(s_i)L \subseteq H_0$ by mapping $l$ in $L$ into the vector $l \oplus 0\oplus 0 \dots$. Note that this is not the diagonal embedding of $L$ into $L^\Gamma_0$ that we use in
Proposition \ref{traces}. We denote by $\tilde {P}_L$ the projection from
$L^{\Gamma_0}$ onto $L$. Then
\begin{equation}\label{reduction}
\Phi= \tilde {P}_L \Phi_{\Gamma_0}\tilde {P}_L.
\end{equation}
\begin{proof}
This is straightforward from formula (\ref{defphi}).
\end{proof}

\end{lemma}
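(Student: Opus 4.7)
The plan is to verify the identity $\Phi = \tilde{P}_L \Phi_{\Gamma_0} \tilde{P}_L$ on a generating family of $\mathcal{A}_0(G, \overline{G})$ and then extend by linearity. The only geometric input required beyond formula~(\ref{defphi}) is the inclusion of closed subspaces $L \subseteq L^{\Gamma_0} \subseteq H$, which yields the factorisation of projections
\[ P_L \;=\; \tilde{P}_L \, P_{L^{\Gamma_0}} \;=\; P_{L^{\Gamma_0}} \, \tilde{P}_L, \]
where $\tilde{P}_L$ is extended to all of $H$ by setting it to zero on $(L^{\Gamma_0})^{\perp}$. This is just the statement that nested orthogonal projections commute and compose to the smaller one.

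For a generator of the form $L_{\chi_{g\overline{\Gamma}_0}}$, substituting this factorisation into each summand of formula~(\ref{defphi}) gives
\[ \Phi(L_{\chi_{g\overline{\Gamma}_0}}) \;=\; \sum_{\theta \in g\Gamma_0} P_L \pi_0(\theta) P_L \;=\; \tilde{P}_L \Bigl( \sum_{\theta \in g\Gamma_0} P_{L^{\Gamma_0}} \pi_0(\theta) P_{L^{\Gamma_0}} \Bigr) \tilde{P}_L, \]
and the expression in parentheses is precisely $\Phi_{\Gamma_0}(L_{\chi_{g\overline{\Gamma}_0}})$ by the definition in Theorem~\ref{cp} applied with the pair $(\Gamma_0, L^{\Gamma_0})$ in place of $(\Gamma, L)$. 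The interchange of $\tilde{P}_L$ with the infinite sum is legitimate because the inner sum converges in the Hilbert--Schmidt norm, which follows from condition~(v) of Definition~\ref{TC} together with the fact that $L^{\Gamma_0}$ is a finite orthogonal sum of $\Gamma$-translates of $L$. For the other family of generators $L_g$, $g \in G$, the single application of the factorisation of $P_L$ gives directly $\Phi(L_g) = P_L \pi(g) P_L = \tilde{P}_L P_{L^{\Gamma_0}} \pi(g) P_{L^{\Gamma_0}} \tilde{P}_L = \tilde{P}_L \Phi_{\Gamma_0}(L_g) \tilde{P}_L$.

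Extending by linearity to the span $\mathcal{A}_0(G, \overline{G})$ completes the verification. I do not anticipate any real obstacle, since the entire argument is a bookkeeping exercise. The only point that merits a brief check is that $\Phi_{\Gamma_0}$ is well defined on the same generating family, i.e.\ that its defining sums converge; this is automatic from Definition~\ref{TC}(v) using the decomposition $P_{L^{\Gamma_0}} = \sum_i \pi(s_i) P_L \pi(s_i)^{*}$ together with the commutation of $\pi(s_i)$ with $P_0$, which reduces each piece of $\Phi_{\Gamma_0}$ to a finite combination of coset sums of the form $\sum_{\theta} P_L \pi_0(s_i^{-1} \theta s_j) P_L$ that are covered by Definition~\ref{TC}.
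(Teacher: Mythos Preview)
Your argument is correct and is precisely the expansion of what the paper means by ``straightforward from formula~(\ref{defphi})'': the paper's own proof is literally that one sentence, and your factorisation $P_L = \tilde{P}_L P_{L^{\Gamma_0}}$ applied termwise to the defining sums is exactly the intended mechanism. Your additional checks on convergence (via the decomposition $P_{L^{\Gamma_0}} = \sum_i \pi(s_i) P_L \pi(s_i)^*$ and the commutation $[\pi(s_i), P_0] = 0$) are a welcome clarification that the paper leaves implicit.
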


\vskip10pt
\begin{rem}\label{pieceofiso}
The operators $\Phi(\chi_{\sigma K})\in B(L)$, $\sigma \in G$ are not isometries, as
$\Phi$ is not a $\ast$-algebra representation.
 However,  as we show below, the operators $\Phi(\chi_{\sigma K})$   are the product of a projection with an isometry.
 
Indeed, for $\sigma\in G$, the partial isometry $L(\chi_{\sigma K})$  has initial space the projection $L(\chi_{\sigma K \sigma^{-1}})$ and range $L(\chi_K)$.

Consider the spaces
$L^{\Gamma_0}, \Gamma_0\in  S$ introduced in the statement of Definition \ref{Lspacespart2}.  The spaces $L^{\Gamma_0}$ were defined in the above mentioned definition  only for $\Gamma_0$ a subgroup of $\Gamma$. We define $L^{\sigma \Gamma \sigma^{-1}}\subseteq L^{\Gamma_\sigma} $ by the formula  $L^{\sigma \Gamma \sigma^{-1}}=\pi_0(\sigma)L$.
 Consequently:
$$\overline{\pi}^{\rm p}(L(\chi_{\sigma K}))=  P_{L^{\sigma \Gamma \sigma^{-1}}}\overline{\pi_0}^{\rm p}(L(\chi_{\sigma K})P_L.$$
 On the other hand, using the skewed  embedding of $L$ in
$L^{\Gamma_\sigma}$ from  Lemma \ref{remarca1}, we obtain  that
$$\Phi(\chi_{\sigma K})= \tilde {P}_L \overline{\pi_0}^{\rm p}(L(\chi_{\sigma K})) P_L.$$
Here the projection $P_L$ corresponds to the standard embedding of $L$ into
$L^{\Gamma_\sigma}$, as described in the statement of Definition \ref{Lspacespart2} and  $\tilde {P}_L$ is the projection from  the previous statement.
\end{rem}

\

\

The completely positive maps constructed in Theorem \ref{cp} are the building blocks of the Hecke operators.   In the next result we prove that a representation as in Theorem \ref{cp}   encodes all the properties of the representation $\overline{\pi_0}^{\rm p}$.  Hence, given $\Phi$, we may recover the representation $\overline{\pi_0}^{\rm p}$ and hence the representation $\pi$.

\begin{prop}\label{matrix1} In the context of Theorem \ref {cp},  the formula for the Hecke operators introduced in  Theorem $\ref{traces}$ is as follows.
Fix $\Gamma_0$ in $\S$ and choose the coset decomposition $\Gamma = \bigcup s_i\Gamma_0$. Then

\begin{equation}
\label{newhecke}
[\Gamma_0: (\Gamma_0)_\sigma]P_{H_0^{\Gamma_0}}\overline{\pi}_0^{\rm p}(\sigma)P_{H_0^{\Gamma_0}} = \mathop{\sum}\limits_{i, j}\Phi(\chi_{\overline{s_i^{-1}\Gamma_0\sigma\Gamma_0s_j}}) \otimes e_{\Gamma_0s_i, \Gamma_0s_j}, \  \sigma \in G.
\end{equation}

Consequently, there exists a one to one correspondence between   the representation $\pi_0$ as in Definition \ref{TC} and   completely positive map $\Phi$ with the properties (i), (ii), (iii)  in Theorem \ref{cp}.

\end{prop}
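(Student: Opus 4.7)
My plan for establishing formula (\ref{newhecke}) is essentially a direct comparison between Theorem \ref{L0} and the definition of $\Phi$ given in Theorem \ref{cp}. Theorem \ref{L0} already identifies the normalized Hecke operator $[\Gamma_0 : (\Gamma_0)_\sigma]\,P_{H_0^{\Gamma_0}}\overline{\pi}_0^{\rm p}(\sigma)P_{H_0^{\Gamma_0}}$, under the canonical unitary equivalence $B(H_0^{\Gamma_0}) \cong B(\ell^2(\Gamma_0\setminus\Gamma))\otimes B(L)$, with the block matrix $\sum_{i,j}\bigl(\sum_{\theta \in s_i^{-1}\Gamma_0\sigma\Gamma_0 s_j}P_L\pi_0(\theta)P_L\bigr)\otimes e_{\Gamma_0 s_i, \Gamma_0 s_j}$. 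On the other hand, the set $s_i^{-1}\Gamma_0\sigma\Gamma_0 s_j \subseteq G$ is a finite disjoint union of left cosets of $\Gamma_0$, so its closure in $\overline{G}$ is a disjoint union of the corresponding $\overline{\Gamma_0}$-cosets. Applying $\Phi$ cosetwise via its defining formula $\Phi(\chi_{g\overline{\Gamma_0}}) = \sum_{\theta \in g\Gamma_0}P_L\pi_0(\theta)P_L$ and summing by linearity yields precisely the $(i,j)$-th block of the right-hand side of (\ref{newhecke}).

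For the one-to-one correspondence, the forward assignment $\pi_0 \mapsto \Phi$ is the content of Theorem \ref{cp}. For the inverse direction, suppose a completely positive $\Phi$ satisfying the properties listed in Theorem \ref{cp} is given. Then formula (\ref{newhecke}) unambiguously determines, for every $\Gamma_0 \in \S$ and every $\sigma \in G$, the compressed operator $P_{H_0^{\Gamma_0}}\overline{\pi}_0^{\rm p}(\sigma)P_{H_0^{\Gamma_0}}$ on the model Hilbert space $\ell^2(\Gamma_0\setminus\Gamma)\otimes L$. Because the family $\S$ has trivial intersection, the subspaces $H_0^{\Gamma_0}$ are directed and their union is dense in $\overline{H}_0^{\rm p}$, so these compatible block matrices reconstruct $\overline{\pi}_0^{\rm p}$ uniquely. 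The data of $\pi_0$ itself is then recovered from $\Phi|_G$, which records the matrix coefficients $P_L\pi_0(g)P_L$; combining this with the expression for $P_0$ given by formula (\ref{newproj}) and the relation $\pi_0(g) = P_0\pi(g)P_0$ allows one to reconstruct $\pi_0$ up to unitary equivalence on $H_0 = P_0 H$.

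The main technical point is to verify that the block matrices produced from $\Phi$ via (\ref{newhecke}) are mutually compatible under the isometric inclusions $H_0^{\Gamma_0} \subseteq H_0^{\Gamma_1}$ for $\Gamma_1 \subseteq \Gamma_0$, and that they assemble into a bona fide unitary group representation of $\overline{G}$. Both compatibilities reduce to the $\ast$-representation property of $\Phi$ on the operator system $\mathcal{O}(K, G)$ (Theorem \ref{cp}(i)), together with Lemma \ref{heckeconverted} which expresses convolution products $\chi_{K_0}\cdot\sigma\cdot\chi_{K_0}$ in terms of the double coset characteristic function $\chi_{K_0\sigma K_0}$ with the correct index normalization. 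The unitarity of each $\overline{\pi}_0^{\rm p}(\sigma)$ is read off from the identity $\Phi(\chi_{\sigma K})\Phi(\chi_{\sigma K})^{\ast} = \Phi(\chi_{\sigma K \sigma^{-1}})$, which is the $\Phi$-image of the fact that $L(\chi_{\sigma K})$ is a partial isometry in $C^{\ast}(\overline{G})$.
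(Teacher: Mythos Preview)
Your argument is correct and follows the same overall strategy as the paper: formula (\ref{newhecke}) is obtained by combining Theorem \ref{L0} (equivalently, formula (\ref{a0}) in Theorem \ref{a0formula}) with the defining formula (\ref{defphi}) for $\Phi$, and the reconstruction of $\overline{\pi}_0^{\rm p}$ from $\Phi$ proceeds by assembling the block matrices at all levels $\Gamma_0\in\S$.

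The one place where the paper is more explicit than you is in the reduction step. You assert that compatibility under the inclusions $H_0^{\Gamma_0}\subseteq H_0^{\Gamma_1}$ and the Hecke-algebra multiplicativity ``reduce to the $\ast$-representation property of $\Phi$ on $\O(K,G)$,'' but you do not say how. The paper isolates the single identity
\[
\sum_{\gamma\in\Gamma}\pi(\gamma)P_L\pi(\gamma^{-1})={\rm Id}_{H},
\]
shows that it is equivalent to the $\ast$-multiplicativity of $\Phi$ on $\O(K,G)$, and then observes that, because $P_{L^{\Gamma_0}}=\sum_i\pi(s_i)P_L\pi(s_i)^{-1}$, the corresponding $\Gamma_0$-level identities (formula (\ref{fundid})) follow mechanically from it. This is the concrete content of your ``both compatibilities reduce to\ldots'' sentence, and it is worth spelling out.

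A minor caution: your final line, reading unitarity of $\overline{\pi}_0^{\rm p}(\sigma)$ directly from $\Phi(\chi_{\sigma K})\Phi(\chi_{\sigma K})^{\ast}=\Phi(\chi_{\sigma K\sigma^{-1}})$, is imprecise. As noted in Remark \ref{pieceofiso}, the operators $\Phi(\chi_{\sigma K})\in B(L)$ are not isometries; they are only a projection times an isometry. Unitarity of $\overline{\pi}_0^{\rm p}(\sigma)$ is recovered not from this $B(L)$-valued identity alone, but from the full Hecke-algebra and coset-space representation one builds at every level $\Gamma_0$, exactly as the paper phrases it.
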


\vskip10pt

\begin{proof}
The  formula
(\ref{newhecke})
 is simply formula (\ref{a0}) in Theorem \ref{a0formula}  rewritten in the new context, by using formula (\ref{defphi}) in Theorem \ref{cp}.

Assume the  properties (i), (ii), (iii) in   Theorem \ref {cp}.  We prove that the operators in formula (\ref{newhecke}) define a representation of the Hecke algebra of double  cosets  for all $\Gamma_0\in\S$. 

Ultimately, the verifications of the multiplicativity of the Hecke operators, given in formula (\ref{newhecke}), come to identities of the form:
\begin{equation}\label{fundid}
\mathop{\sum}\limits_{\gamma_0 \in \Gamma_0}P_{L^{\Gamma_0}}\pi_0(\sigma_1\gamma_0)P_{L^{\Gamma_0}}(\pi_0(\gamma_0^{-1}\sigma_2))P_{L^{\Gamma_0}} = P_{L^{\Gamma_0}}(\pi_0(\sigma_1\sigma_2))P_{L^{\Gamma_0}}.
\end{equation}

The reason for which  this equality holds true, is that $P_{L^{\Gamma_0}}$ is the projection on a $\Gamma_0$-wandering, generating subspace of $H$ (see also the  proof of Lemma \ref{rephecke}).
Thus, the  only identity  because of which   the representation in formula
(\ref{newhecke}) is  a representation of the Hecke algebra,  is the identity:
$$
\mathop{\sum}\limits_{\gamma_0 \in \Gamma_0}\pi(\gamma_0)P_{L^{\Gamma_0}}\pi(\gamma_0^{-1}) = {\rm Id}_{H_0}.
$$

Decomposing  $P_{L^{\Gamma_0}} = \mathop{\sum}\limits_{i}\pi(s_i)P_L\pi(s_i)$, this is implied by   the identity:
$$
\mathop{\sum}\limits_{\gamma \in \Gamma}\pi(\gamma)P_L\pi(\gamma^{-1}) = {\rm Id}_{H_0}.
$$
 But this is exactly the identity  proving the multiplicativity property (ii) in Theorem \ref{cp}.

 Thus if we know that $\Phi$ is multiplicative, as in property (ii) in the previous theorem, then we automatically have that the completely positive maps $\Phi_{\Gamma_0}$ verify the corresponding multiplicativity property in (ii) on the corresponding operators systems $\O(\overline{\Gamma_0}, \overline G)$, for $\Gamma_0\in \S$.
 Let $\P_{\Gamma_0, L}$ be the projection introduced in foromla (\ref{proj}).

  Since the operator systems contain the corresponding Hecke algebras $\H_0(\overline{\Gamma_0}, \overline G)$, it follows that
 the representation in formula (\ref{newhecke}) is a $\ast$-algebra representation of the inductive limit of all the above Hecke algebras into the inductive limit of the spaces $\P_{\Gamma_0, L}B(L^{\Gamma_0})\P_{\Gamma_0, L}$. But this inductive limit is exactly the space of bounded operators acting on the Hilbert space $\overline H_0^{\rm p}$.  Since along with the Hecke algebras we also have a representation of the spaces of cosets, it follows that  we have reconstructed the unitary representation $\overline{\pi}_0^{\rm p}$ of $C^\ast(\overline{G})$. Hence we may recover $\pi_0$, because of  Theorem \ref{cp}.

\end{proof}

\vskip10pt

In the following  statements (Lemma \ref{rephecke}, Proposition \ref{lifting} and Proposition \ref{double}) we recall results from [Ra2]. We adapt the statement of the  results to the present framework.
In the next lemma we prove a  result complementing the statement in Theorem \ref{cp}.
We prove that the completely positive maps in Theorem \ref{cp} have a natural lifting to the algebra $\L(G) \otimes B(L)$. This lifting was essential tool in proving, in the paper [Ra], the essential norm estimates on the spectrum of the Hecke operators. In particular, we give an alternative interpretation for property (ii) in Theorem \ref{cp}.

\vskip10pt

\vskip10pt

\begin{proof} [Proof of Lemma \ref{rephecke}]
This was also proved in (\cite {Ra1}, Proposition 2.2 and Lemma 3.1).
The  main step of the proof of the multiplicativity property in formula (\ref{multiplicativity1}) is the following:  by identifying the coefficients of $\rho(g)$, $g \in G$, in both sides of the equation, one reduces the proof of the multiplicativity property to the following  equality (also used in the proof of Proposition \ref{matrix1}) :
$$
\begin{aligned}
& \mathop{\sum}\limits_{\gamma \in \Gamma}P_L\pi_0(\sigma_1\gamma)P_L\pi_0(\gamma^{-1}\sigma_2)P_L 
= \mathop{\sum}\limits_{\gamma \in \Gamma}P_L\pi_0(\sigma_1)\pi(\gamma)P_L\pi(\gamma^{-1})\pi_0(\sigma_2)P_L \\ & \qquad
= P_L\pi_0(\sigma_1)\pi_0(\sigma_2)P_L = P_L\pi_0(\sigma_1\sigma_2)P_L, \quad \sigma_1, \sigma_2 \in G.
\end{aligned}
$$

\end{proof}

\vskip10pt

The operators $\tilde{\Phi}_{\pi_0, L}$ introduced in Definition \ref{thecanonicalos} have a similar interpretation as in Remark \ref{pieceofiso}.  Moreover, because of the convergence assumptions in the statement of Theorem  \ref {traces}, for the sums  of the form $\mathop{\sum}\limits_{\theta \in C}P_L\pi_0(\theta)P_L$ with cosets $C$ in $\overline{G}$, the operators $\tilde{\Phi}_{\pi_0, L}(C)$ are liftings of the operators $\Phi(C)$ in the Proposition $\ref{cp}$.

This is proved  in the  following statement. The  statement  is a straightforward  consequence of Theorem \ref{cp} and of Lemma \ref{rephecke}. We mention the statement as a separate lemma since it also  provides
a different proof for  Theorem \ref{cp}.


\vskip10pt

\begin{lemma}\label{lifting} Let $\varepsilon$ be the unbounded character $\varepsilon$ on $\ell ^1(G) \subseteq \L(G)$ which associates to $x$ in  $\ell ^1(G)$ the sum of its coefficients.
We extend $\varepsilon$ to an unbounded character $\tilde{\varepsilon} = \varepsilon \otimes {\rm Id}_{B(L)}$,
$$\tilde{\varepsilon} :  \ell ^1(G) \otimes B(L)\subseteq \cR(G) \otimes B(L)\rightarrow B(L).$$

The convergence assumption in Definition \ref{TC}  implies that the image of the $\ast$-representation $\tilde{\Phi}_{\pi_0, L}$, constructed in Lemma \ref{rephecke}, is contained   in the domain of $\tilde{\varepsilon}$. 
 Consequently, with $\Phi$ as in Theorem \ref{cp}, we have the following commutative diagram:
$$
\tilde{\varepsilon}\circ \tilde{\Phi}_{\pi_0, L}|_{\O(K, \overline G)}= \Phi|_{\O(K, \overline G)}$$



\end{lemma}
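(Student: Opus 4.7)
The plan is to verify the identity directly on the generators of the operator system $\O(K,\overline G)$, namely on characteristic functions $\chi_{g\overline{\Gamma}_0}$ of right cosets of subgroups $K_0=\overline{\Gamma}_0$, and then extend by linearity. By Lemma \ref{rephecke}, for a right coset $C=g\overline{\Gamma}_0$ we have
$$
\tilde\Phi_{\pi_0,L}(C)=\sum_{\theta\in g\Gamma_0}\rho(\theta)\otimes P_L\pi_0(\theta)P_L\in\mathcal C_1(\mathcal B),
$$
while by the defining formula \eqref{defphi} of Theorem \ref{cp},
$$
\Phi(L_{\chi_{g\overline{\Gamma}_0}})=\sum_{\theta\in g\Gamma_0}P_L\pi_0(\theta)P_L.
$$
Since $\varepsilon$ is the character sending each $\rho(\theta)$ to $1$, if one may legitimately apply $\tilde\varepsilon=\varepsilon\otimes\mathrm{Id}_{B(L)}$ termwise, one obtains at once
$$
\tilde\varepsilon\bigl(\tilde\Phi_{\pi_0,L}(C)\bigr)=\sum_{\theta\in g\Gamma_0}P_L\pi_0(\theta)P_L=\Phi(L_{\chi_C}),
$$
which is the desired equality on generators.

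The key point I would then justify carefully is that this termwise evaluation is legitimate. The unbounded character $\tilde\varepsilon$ is a priori only defined on $\ell^1(G)\otimes B(L)$, viewed inside $\mathcal R(G)\,\overline\otimes\,B(L)$. What makes this well-defined on the image of $\tilde\Phi_{\pi_0,L}$ is precisely the summability hypothesis (v) of Definition \ref{TC}: the series $\sum_{\theta\in g\Gamma_0}P_L\pi_0(\theta)P_L$ converges in $\mathcal C_2(L)$, so the family $\{P_L\pi_0(\theta)P_L\}_{\theta\in g\Gamma_0}$ of ``Fourier coefficients'' of $\tilde\Phi_{\pi_0,L}(C)$ in the $\rho$-basis is absolutely summable in a norm topology on $B(L)$, placing $\tilde\Phi_{\pi_0,L}(C)$ in the natural domain of $\tilde\varepsilon$ and allowing $\tilde\varepsilon$ to commute with the sum.

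Finally, the extension to all of $\O(K,\overline G)$ is formal. Every element of $\O(K,\overline G)$ is a linear combination of products $\chi_{\sigma_1 K}\cdot\chi_{K\sigma_2^{-1}}=\chi_{\sigma_1 K\sigma_2^{-1}}$, which in turn are finite linear combinations of characteristic functions of right cosets $g K_0$ with $K_0=\overline{(\Gamma_0)_{\sigma_2^{-1}}}$ (after a suitable partitioning). Both sides of the asserted equality are linear in this generating family, so the equality on single cosets propagates; alternatively, since $\tilde\Phi_{\pi_0,L}$ is a $\ast$-representation of $\O(K,\overline G)$ by Lemma \ref{rephecke}(i) and $\Phi$ is one by Theorem \ref{cp}(i), and the two agree on the generators $\chi_{gK_0}$, they coincide on the operator system.

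The only substantive obstacle is the exchange of $\tilde\varepsilon$ with the infinite sum above, and this is exactly what conditions (v) and (vi) of Definition \ref{TC} were designed to ensure; once those are in hand the lemma reduces to a bookkeeping check on the explicit formulas for $\tilde\Phi_{\pi_0,L}$ and $\Phi$.
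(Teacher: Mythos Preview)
Your proof is correct and follows the same approach as the paper's own proof, which simply states that the result is ``straightforward from the formulae of $\Phi$ and $\tilde{\Phi}_{\pi_0, L}$ from Theorem \ref{cp} and, respectively, from Lemma \ref{rephecke}.'' You have filled in exactly what that straightforward verification entails: matching the explicit series for $\tilde\Phi_{\pi_0,L}(C)$ and $\Phi(L_{\chi_C})$ on coset generators, invoking the summability condition (v) of Definition \ref{TC} to justify applying $\tilde\varepsilon$ termwise, and extending by linearity.
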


\vskip10pt

\begin{proof} This is straightforward from the formulae of ${\Phi}$ and $\tilde{\Phi}_{\pi_0, L}$ from Theorem \ref{cp} and, respectively, from Lemma \ref{rephecke}.

\end{proof}

\vskip10pt

The operators $\tilde{\Phi}_{\pi_0, L}$ are  used to construct a unitary equivalent representation (see formula \eqref{formulapsi} for the Hecke operators associated to the unitary, diagonal    representation $\pi_0 \otimes \pi_0^{\rm op}$ of $G$, where $\pi_0$ is as in Definition \ref{TC}). This was first proved (in the case of Murray-von Neumann dimension equal to 1) in \cite{Ra}, Theorem 22 (see also \cite{Ra2} for a more concise exposition), and then generalized to arbitrary dimension in \cite{Ra1}, Theorem 3.2. For the convenience of the reader, since  we are  explaining   the example of the Hecke algebra representation associated with the unitary representation
$\pi_0 \otimes \pi_0^{\rm op}$ of $G$,
we  recall    the statement of Theorem  3.2 in \cite{Ra1}.  In Theorem \ref {multhecke} we provide an alternative proof of the fact that formula (\ref{formulapsi}) gives a representation of the Hecke algebra of double cosets of $\Gamma$ in $G$.


We  use the  identifications proved in Example \ref{ad}  and the operators $\tilde{\Phi}_{\pi_0, L}$ introduced in Lemma \ref{rephecke}, to explicitly describe the Hecke operators on $\Gamma$-invariant vectors associated  the unitary, diagonal  representation $\pi_0 \otimes \pi_0^{\rm op}$ of $G$.
In this case, the Hilbert  spaces of $\Gamma$-invariant vectors  are easier to handle, since we may canonically identify these spaces with the $L^2$-spaces of the von Neumann algebra of operators commuting with the image of the representation of the group $\Gamma$.

 In \cite{Ra}, by using Berezin's quantization methods (\cite{Be}), or alternatively using the results in \cite{Re}  we prove that the above model for the Hecke operators acting on $\Gamma$-invariant vectors for $\pi_0 \otimes \pi_0^{\rm op}$  is unitarily equivalent to the representation of the Hecke operators of the associated von Neumann algebra of Maass forms.
 
   The content of  Theorem \ref{double} is the  explicit operator algebra model of the former representation associated with $\pi_0 \otimes \pi_0^{\rm op}$. This theorem was obtained in the case of Murray von Neumann dimension 1 in (\cite{Ra}, Theorem 22)  and generalized to arbitrary dimension in (cite{Ra1}, Theorem 3.2).  We have adapted the statement of the theorem to the framework of the present paper.

\vskip10pt

\vskip10pt
\begin{proof}[Proof of Theorem \ref{double}]

This is  Theorem 3.2 in \cite{Ra1}.
The present framework proves that, once a canonical $L$ is chosen for the representation $\pi$, the representation of the Hecke operators for $\pi_0 \otimes \pi_0^{\rm op}$ becomes canonical.
\end{proof}

\vskip10pt

We are giving a direct proof, in the particular case ${\rm dim}_{\{\pi_0(\Gamma)\}''}H = 1$, (Theorem \ref{multhecke}) of the fact the Hecke operators introduced in Theorem  \ref{double}, formula (\ref{formulapsi}), define a multiplicative representation of the Hecke algebra of double cosets of $\Gamma$ in $G$.
The proof will show that the Hecke algebra representation determined by the Hecke operators for $\pi_0 \otimes \pi_0^{\rm op}$, is obtained from a canonical representation of the Hecke algebra, that is further composed with a quotient map.
For simplicity of the exposition we assume in the rest of the paper that the groups $\Gamma$ and $G$ have infinite, non-trivial conjugacy classes, and hence that the associated von Neumann algebras have unique traces.

\begin {rem}\label{1dim}
Assume that ${\rm dim}_{\{\pi_0(\Gamma)\}^{\prime\prime}}H = 1$. In the setting of Theorem \ref{hecke} in Section \ref{axioms},
we take $L=L_0 = \C\xi$ for a cyclic,  trace vector $\xi\in H_0$, for $\pi_0|_\Gamma$. The construction in Lemma \ref{rephecke}, gives a linear map $\tilde{\Phi}_{\pi_0, L_0}$, which we now denote by $t$:
$$
t : \H_{\rm red}(\Gamma, G) \to \cR(G)\otimes B(L_0)\cong \cR(G).
$$

\noindent Since $L_0$ is one dimensional, and using the vector $\xi$ to identify $L_0 = \C\xi$ with $\C$,  it follows that we may substitute
$P_{L_0} \pi_0(\theta)P_{L_0}$ by
$${\rm Tr}(P_{L_0} \pi_0(\theta)P_{L_0})=
\langle \pi_0(\theta)\xi, \xi\rangle,\quad \theta \in G.
$$

For a coset $C$ of a subgroup in $\S$, the formula  for $\tilde{\Phi}_{\pi_0, L_0}(\chi_C)$ from Lemma \ref{rephecke} is now
 $$t(\chi_C) = \mathop{\sum}\limits_{\theta \in C} \langle \pi_0(\theta)\xi, \xi \rangle \rho(\theta).$$
We  compose $t$ with the canonical anti-isomorphism between $\L(G)$ and $\cR(G)$. For simplicity we denote the composition map also by $t$. Thus  $t$ is a linear map from  C$^{\ast}(G)$ with values  in $\L(G)$.   We denote by $\lambda_g$ the left convolutors by elements $g\in G$. Then $t$ is given by the formula$$
t(\chi_C) = \mathop{\sum}\limits_{\theta \in C} \langle \overline{\pi_0(\theta)\xi, \xi \rangle} \lambda(\theta).
$$
Because of Lemma \ref{rephecke}, $t$ is a $\ast$-preserving, multiplicative representation of the operator system $\O_{\Gamma, G}=\O(K,\overline G)$ introduced in Definition \ref {thecanonicalos}:
$$
\O_{\Gamma, G} = \big[{\rm Sp}\{ \chi_{\sigma_1 K} | \sigma_1 \in G \}\big]\big[{\rm Sp}\{ \chi_{\sigma_2 K} | \sigma_2 \in G \}\big]^{\ast}.
$$

\noindent
We use a notational  convention, denoting the characteristic functions  $\chi_{\sigma_1 K}$, $\chi_{K\sigma_2}$, $\chi_{\sigma_1 K\sigma_2} $ simply by the  corresponding cosets in $G$: respectively $\sigma_1\Gamma$, $\Gamma\sigma_2$, $\sigma_1\Gamma\sigma_2$, for $\sigma_1, \sigma_2\in G$.
Thus the $\ast$-preserving, multiplicativity property for $t|_ {\O_{\Gamma, G}}$ reads as
$$t(\sigma_1\Gamma)t(\sigma_2^{-1}\Gamma)^{\ast} =
t(\sigma_1\Gamma)t(\Gamma\sigma_2) = t(\sigma_1\Gamma\sigma_2),\quad \sigma_1, \sigma_2 \in G.
$$

\end{rem}
\

\

\begin{rem}
In practice, it is difficult to find a cyclic trace vector $\xi$ as above.
So it is preferable to use the construction from Section \ref{pi0}, Theorem \ref{traces}. Thus $\pi_0$ comes from a larger representation $\pi$ of $G$ into the unitary group of a Hilbert space $H$, by restricting to a space $H_0\subseteq H$, that is  invariant under $\pi(G)$. In this case we use a choice of  $\Gamma$-wandering, generating  subspace $L$ for
$\pi|_\Gamma$. In the case of the analytic discrete series of unitary representations $\pi_n, n\geq 1$ of $\PSL(2,\mathbb R)$,  a choice described as above is almost canonical, as it consists into the selection of a fundamental domain for the action of $\Gamma$ on $\mathbb H$.

To obtain straightforwardly   the   representation $t$ from $\tilde{\Phi}_{\pi_0, L}$, one  proceeds directly as follows (\cite{Ra1}) :
Consider the conditional expectations
$$
E^{\cR(G) \otimes B(L)}_{\cR(G) \otimes \C{\rm Id}_{B(L)}},\quad E^{\cR(\Gamma) \otimes B(L)}_{\cR(\Gamma) \otimes \C{\rm Id}_{B(L)}}
$$
from $\B=\cR(G) \otimes B(L)$, and respectively $\A=\cR(\Gamma) \otimes B(L)$, onto the algebras $\cR(G) \otimes \C{\rm Id}_{B(L)}$, and respectively
$\cR(\Gamma) \otimes \C{\rm Id}_{B(L)}$. The conditional expectations are simply computed  by taking the operatorial trace on the tensor factor corresponding to $B(L)$.

For a coset $C$ as in the previous remark, we have
\begin{equation} \label {tildec}\tilde t (\chi_C)=
E^{\cR(G) \otimes B(L)}_{\cR(\Gamma) \otimes {\rm Id}_{B(L)}}(\tilde \Phi_{\pi_0, L}(\chi_C)) = \mathop{\sum}\limits_{\theta \in C}{\rm Tr}(P_L\pi_0(\theta))\rho(\theta).
\end{equation}
\noindent  We use the formula (\ref{newproj}) for the projection $P_0$ and  define $$
\xi_0=E^{\cR(\Gamma) \otimes B(L)}_{\cR(\Gamma) \otimes {\rm Id}_{B(L)}}(P_0) = \mathop{\sum}\limits_{\gamma \in \Gamma}{\rm Tr}(P_L\pi_0(\gamma))\rho(\gamma).
$$

Since ${\rm dim}_{\{\pi_0(\Gamma)\}^{\prime\prime}}H_0= 1$, and $P_0$ is a projection in $\A\subseteq \B$ of  trace 1, it follows that $\xi_0$ has zero kernel. Moreover (\cite{Ra1}, Proposition 3.3)  the conditional expectation map, corrected with   the inverse of the square root of $\xi_0$, is
a von Neumann algebra isomorphism when restricted to $P_0\B P_0$.
Thus \begin{equation}\label{tilde}
\tilde E=(\xi_0)^{-1/2} E^{\cR(G) \otimes B(L)}_{\cR(\Gamma) \otimes {\rm Id}_{B(L)}}|_{P_0\B P_0} (\xi_0)^{-1/2},
\end{equation}
\noindent
is a von Neumann algebra isomorphism $P_0\B P_0$ onto $\cR(G)$.
 We define
$$
 t(\chi_C)= \tilde E (\tilde t(C)).$$
Then $t|_ {\O_{\Gamma, G}}$ is an isomorphism from  $\H(\Gamma,G)$ into $\cR(G)$
 (see Lemma 3.3 in \cite{Ra1} for the proof).
 Combining the formulae (\ref{tildec}) and (\ref{tilde}), we obtain the following alternative formula for the representation $t$ from the previous remark:
 $$t(\chi_C)= (\xi_0)^{-1/2}\big[ \mathop{\sum}\limits_{\theta \in C}{\rm Tr}(P_L\pi_0(\theta))\rho(\theta)\big] (\xi_0)^{-1/2}.$$
 \end{rem}

\vskip10pt

\







\begin{proof}[Proof of Theorem \ref{multhecke}]

Fix $\sigma \in G$. Then the double coset $\Gamma\sigma\Gamma$ decomposes as $\bigcup \Gamma\sigma s_i = \bigcup r_j\sigma\Gamma$.  Hence
\begin{equation}\label{chain111}
\begin{aligned}
& \chi_K(L(\chi_{K\sigma K}) \otimes L(\chi_{K\sigma K})^{\rm op})\chi_K \\ & 
= \mathop{\sum}\limits_{i, j}[L(\chi_{K\sigma s_i}) \otimes L(\chi_{K\sigma s_j})^{\rm op}]\chi_{s_i\sigma^{-1}K\sigma s_j^{-1}\bigcap K} \\ &
= \mathop{\sum}\limits_{a, b} \chi_{r_a\sigma K\sigma^{-1}r_b\bigcap K} [L(\chi_{r_a\sigma K}) \otimes L(\chi_{r_b\sigma K})^{\rm op}] 
\end{aligned}
\end{equation}
\begin{equation}\nonumber
= \mathop{\sum}\limits_{i, j, a, b}\chi_{r_a\sigma K\sigma^{-1}r_b\bigcap K}[L(\chi_{r_a\sigma K_{\sigma^{-1}} s_i}) \otimes L(\chi_{r_b\sigma K_{\sigma^{-1}}s_j})^{\rm op}]\chi_{s_i\sigma^{-1}K\sigma s_j \bigcap K}.
\end{equation}

Here $K_{\sigma^{-1}}$ is the closure in $K$ of the subgroup $\Gamma_{\sigma^{-1}} = \sigma^{-1}\Gamma\sigma \bigcap \Gamma$.

Using the above equality, one proves immediately (see e.g. the computations in \cite{Ra}, Section 5, or \cite{Ra2}) that the linear map in the statement is multiplicative.
This completes the proof of part (i) of the statement.



 The representation $t$ of the operator system $\O_{\Gamma,G}=\O(K,\overline G)$ extends obviously to a representation $\tilde t$ of the operator system
$$
(L^{\infty}(\overline{G},\mu){\rm Sp}\{L( \chi_{\sigma_1 K}) | \sigma_1 \in G \})(L^{\infty}(\overline{G},\mu){\rm Sp}\{L( \chi_{\sigma_2 K}) | \sigma_2 \in G \})^{\ast}.
$$

Then $\tilde t$ extends to a "double" representation $t_2$ of an operator system contained in $C^{\ast}((G \times G^{\rm op}) \rtimes L^{\infty}(\overline{G}),\mu)$ containing the image of the Hecke algebra constructed in point (i).
This concludes the proof of part (ii).

The  important observation for the proof of part (iii)  is  the fact that all the operations that are involved in the multiplication of two  elements of the form $\chi_K(L(\chi_{K\sigma_1 K}) \otimes L(\chi_{K\sigma_1 K})^{\rm op})\chi_K$ remain inside the domain of the representation $t_2$ (see the first equality in the chain of equalities in formula (\ref{chain111})). Indeed these operations involve only convolutions of the form $$L(\chi_{\sigma_1 K})L(\chi_{K\sigma_2}),\quad
\sigma_1, \sigma_2\in G,$$ or their opposites.

Consequently, the composition of $t_2$ with the map in the preceding lemma gives a representation of the Hecke algebra.

\end{proof}

\end{document}